\newtheorem{theorem}{Theorem}[section]
\newtheorem{lemma}[theorem]{Lemma}
\newtheorem{corollary}[theorem]{Corollary}
\newtheorem{proposition}[theorem]{Proposition}
\theoremstyle{definition}
\newtheorem{definition}[theorem]{Definition}
\newtheorem{example}[theorem]{Example}
\newtheorem{conjecture}[theorem]{Conjecture}
\theoremstyle{remark}
\newtheorem{remark}[theorem]{Remark}
\newtheorem{question}[theorem]{Question}
\newtheorem{problem}[theorem]{Problem}
\numberwithin{equation}{section}
\DeclareMathOperator{\Aut}{Aut}
\DeclareMathOperator{\Coker}{Coker}
\DeclareMathOperator{\E}{E}
\DeclareMathOperator{\Ext}{Ext}
\DeclareMathOperator{\ext}{ext}
\DeclareMathOperator{\Gr}{Gr}
\DeclareMathOperator{\Hom}{Hom}
\DeclareMathOperator{\Ker}{Ker}
\DeclareMathOperator{\PHom}{PHom}
\DeclareMathOperator{\rep}{rep}
\newcommand{\op}[1]{\operatorname{#1}}
\newcommand{\mb}[1]{\mathbb{#1}}
\newcommand{\mc}[1]{\mathcal{#1}}
\renewcommand{\b}[1]{\bold{#1}}
\newcommand{\e}{{\op{e}}}
\newcommand{\g}{{\sf g}}
\newcommand{\N}{{\sf N}}
\newcommand{\V}{{\sf V}}
\newcommand{\Ec}{{\check{\E}}}
\newcommand{\ec}{{\check{\e}}}
\newcommand{\fc}{{\check{f}}}
\newcommand{\tc}{{\check{t}}}
\newcommand{\dc}{{\check{d}}}
\newcommand{\dtc}{{\check{\delta}}}
\newcommand{\dtb}{{\br{\delta}}}
\newcommand{\ep}{{\epsilon}}
\newcommand{\br}[1]{\overline{#1}}
\newcommand{\innerprod}[1]{\langle#1\rangle}
\renewcommand{\ss}[2]{{\rep^{#1}({#2})}}
\newcommand{\dv}{\underline{\dim}}
\newcommand{\cone}[1]{\mb{R}_+\Sigma_{\beta_{#1}}({K_{#1,#1}^2})}
\renewcommand{\cone}{\mb{R}_{\geq 0}}
\newcommand{\yb}{\bar{y}}
\begin{document}
\title{Combinatorics of $F$-polynomials}
\author{Jiarui Fei}
\address{School of Mathematical Sciences, Shanghai Jiao Tong University}
\email{jiarui@sjtu.edu.cn}
\thanks{The author was supported in part by National Science Foundation of China (No. 11971305).}

\subjclass[2010]{Primary 16G20; Secondary 13F60,\ 52B20}

\date{}
\dedicatory{}
\keywords{$F$-polynomial, Quiver Representation, Cluster Algebra, Stabilization Functor, Newton Polytope, Saturation}

\begin{abstract} We use the stabilization functors to study the combinatorial aspects of the $F$-polynomial of a representation of any finite-dimensional basic algebra. 
We characterize the vertices of their Newton polytopes.
We give an explicit formula for the $F$-polynomial restricting to any face of its Newton polytope.
For acyclic quivers, we give a complete description of all facets of the Newton polytope when the representation is general.	
We also prove that the support of the $F$-polynomial is saturated for any rigid representation.
We provide many examples and counterexamples, and pose several conjectures. 
\end{abstract}

\maketitle
\section*{Introduction}
The $F$-polynomial discussed in this paper originated from the theory of cluster algebras \cite{FZ1}.
For every $p\times q$ matrix $B$ whose {\em principal part} is skew-symmetrizable, there is an associated cluster algebra $\mc{C}(B)$,
which is contained in the field of rational functions $\mb{C}(x_1,x_2,\dots,x_q)$.
It is defined by a set of generators, called {\em cluster variables}, that are computed recursively via an operation called
{\em mutation}.
We know from \cite{FZ4} that any such {cluster variable} can be written as 
$$\prod_{i=1}^q x_i^{\g(i)} F(y_1,y_2,\cdots,y_p).$$
Here the polynomial $F$ is called the {\em $F$-polynomial} of the cluster variable,
and $y_i=\prod_{j=1}^q x_j^{B_{i,j}}$.
In what follows we will use $\b{x}^\g$ to denote the monomial $\prod_{i} x_i^{\g(i)}$ and similarly for $\b{y}^\gamma$. 

The $F$-polynomial was later endowed with the representation theoretical meaning through the categorification (e.g. \cite{CC,DWZ2}).
If the cluster algebra is {\em skew-symmetric}, then the above polynomial $F$ is the $F$-polynomial of a {\em reachable} representation of some {\em quiver with potential}.
Roughly speaking, the {\em $F$-polynomial} of a quiver representation $M$ is the generating series of the topological Euler characteristic of the {\em representation Grassmannian} of $M$:
\begin{equation*} F_M(\b{y}) = \sum_{\gamma} \chi(\Gr_\gamma(M)) \b{y}^\gamma. \end{equation*}
	
Many deep results and conjectures of the cluster algebras are related to the $F$-polynomials.
For example, the original positivity conjecture \cite{FZ1} is equivalent to that all coefficients in the above $F$-polynomial are positive.
A large class of cluster algebras admit a {\em generic basis} $\{\b{x}^{\g_M} F_M(\b{y})\}$ where $M$ runs through a certain class of generic representations \cite{P,GLSg}.

Despite its elegant definition and important role, very little research so far has focused on the $F$-polynomial  (rather than a single coefficient $\chi(\Gr_\gamma(M))$), especially the combinatorial aspect.
We recall that the Newton polytope of a multivariate polynomial $\sum_{k} c_k \b{y}^{\gamma_k}\ (c_k\neq 0)$ is the real convex hull of all $\gamma_k$'s.
In this paper, we study the following three natural problems:
\begin{problem}\label{p:a} How to characterize the vertices of the Newton polytope of $F_M$? What are the coefficients corresponding to the vertices? 
\end{problem}

\begin{problem}\label{p:b} How to describe all facets of the Newton polytope of $F_M$? What is the restriction of $F_M$ to a particular facet? 
\end{problem}

\begin{problem}\label{p:c} Is the support of $F_M$ {\em saturated} (to be explained below)? 
\end{problem}

We have a nice solution to Problem \ref{p:a}. The {\em Newton polytope ${\N}(M)$ of a representation} $M$ is by definition the convex hull of
$$\{ \dv L \mid L\hookrightarrow M \}$$
in $\mb{R}^{Q_0}$. 
\begin{theorem}[Theorem \ref{T:onept}] \label{T:intro1} If $\gamma$ is a vertex of ${\N}(M)$ then $\Gr_\gamma(M)$ must be a point.
\end{theorem}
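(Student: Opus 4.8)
\emph{Proof proposal.} The plan is to bypass geometry almost entirely: I will show that a vertex $\gamma$ of ${\N}(M)$ is the dimension vector of exactly one subrepresentation of $M$, which forces $\Gr_\gamma(M)$ to be a single (reduced) point, with $\chi(\Gr_\gamma(M))=1$. First recall that ${\N}(M)$ is the convex hull of the \emph{finite} set $S=\{\dv L \mid L\hookrightarrow M\}$. A vertex is an extreme point of a convex hull of a finite set, hence lies in $S$; so $\gamma=\dv L$ for some $L\hookrightarrow M$ and in particular $\Gr_\gamma(M)\neq\varnothing$. Being a vertex, $\gamma$ is moreover the \emph{unique} maximizer over ${\N}(M)$ of some linear functional $\ell\in(\mb{R}^{Q_0})^*$; equivalently $\ell(\dv L)\le\ell(\gamma)$ for every subrepresentation $L\hookrightarrow M$, with equality only when $\dv L=\gamma$.

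The heart of the argument is uniqueness. Suppose $L_1,L_2\hookrightarrow M$ both have $\dv L_i=\gamma$. Inside $M$ form the sub\-rep\-re\-sen\-ta\-tions $L_1\cap L_2$ and $L_1+L_2$ and use the short exact sequence
\[ 0\longrightarrow L_1\cap L_2 \longrightarrow L_1\oplus L_2 \longrightarrow L_1+L_2 \longrightarrow 0, \]
which gives $\dv(L_1\cap L_2)+\dv(L_1+L_2)=2\gamma$. Applying $\ell$ and using $\ell(\dv(L_1\cap L_2))\le\ell(\gamma)$ and $\ell(\dv(L_1+L_2))\le\ell(\gamma)$, the relation $\ell(\dv(L_1\cap L_2))+\ell(\dv(L_1+L_2))=2\ell(\gamma)$ forces both summands to equal $\ell(\gamma)$; by the uniqueness of the maximizer, $\dv(L_1\cap L_2)=\dv(L_1+L_2)=\gamma$. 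Since $L_1\cap L_2\subseteq L_1\subseteq L_1+L_2$ and all three have the same dimension vector, they coincide, so $L_1=L_2$. Thus $\Gr_\gamma(M)$ consists of a single point.

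I expect essentially no obstacle. The only points to treat with a little care are the two standard facts from convex geometry invoked above (a vertex of the convex hull of a finite set belongs to that set, and a vertex is cut out by a strictly supporting functional), and the harmless scheme-theoretic question of whether the unique point of $\Gr_\gamma(M)$ is reduced — irrelevant for $F_M$ since the Euler characteristic of a one-point space is $1$ regardless. It is worth recording that the same intersection/sum trick proves slightly more: the fiber of $\dv$ over \emph{any} extreme point of $S$ is a single point, even before passing to the convex hull; and it pins down the coefficient of $\b{y}^\gamma$ in $F_M$ at a vertex to be $1$, which is the answer to the second half of Problem~\ref{p:a}. If one prefers a more structural route — likely the one taken via the stabilization functor — one can instead observe that the supporting functional $\ell$ determines a torus/one-parameter degeneration under which $L_1+L_2$ and $L_1\cap L_2$ become competing subrepresentations, and run the same additivity estimate there; but the elementary version already suffices for any finite-dimensional basic algebra.
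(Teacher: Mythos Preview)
Your proof is correct and genuinely more elementary than the paper's. Both arguments hinge on the same lattice-theoretic fact---that the set of subrepresentations maximizing a linear functional is closed under intersection and sum---but you establish this directly from the additivity $\dv(L_1\cap L_2)+\dv(L_1+L_2)=\dv L_1+\dv L_2$ and the strict-support property of a vertex. The paper instead routes the same fact through its homological machinery: Lemma~\ref{L:sub=h} characterizes subrepresentations $L$ with $\delta(\dv L)=\hom(\delta,M)$ by the vanishing of $\hom(\delta,M/L)$ and $\e(\delta,L)$, and deduces closure under $\cap$ and $+$ from long exact sequences for a general presentation of weight $n\delta$; Theorem~\ref{T:maxminsub} then packages this into the existence of unique $L_{\min}$ and $L_{\max}$, and Lemma~\ref{L:vertex} identifies these with the functors $t_{\dtb}$ and $\tc_{\dtb}$. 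For the bare statement of Theorem~\ref{T:onept} your shortcut is cleaner and needs none of Section~\ref{S:Ftrop} or~\ref{S:stab}. What the paper's longer route buys is structure that is reused later: the functorial description of $L_{\min},L_{\max}$ feeds directly into Proposition~\ref{P:vhoms}, the torsion-theoretic statements of Theorem~\ref{T:torsion}, and the facet formula of Theorem~\ref{T:faces}. Your closing remark that the argument pins down the vertex coefficient of $F_M$ to be $1$ is exactly right, and matches the paper's Remark after the definition of $\N(M)$.
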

\noindent  This shows in particular that the Newton polytope of $M$ is the same as the usual Newton polytope of the polynomial $F_M$.
The converse does not hold in general but we conjecture at least for the {\em Jacobian algebras} that the converse holds for $M$ being a cokernel of a general presentation.
We have a list of properties for the subrepresentation corresponding to the unique point (Proposition \ref{P:vhoms}).

For a {\em rigid} representation $M$ of an acyclic quiver (that is $M$ satisfying $\Ext^1(M,M)=0$), we have the following characterization of vertices.
\begin{theorem}[Theorem \ref{T:acyclicv}] \label{T:intro2} Let $M$ be an $\alpha$-dimensional rigid representation of an acyclic quiver. Then $\gamma$ is a vertex of ${\N}(M)$ if and only if $\gamma \perp (\alpha -\gamma)$.
\end{theorem}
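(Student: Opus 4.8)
The plan is to pass through quiver Grassmannians: for rigid $M$ the vertices of ${\N}(M)$ should be exactly the $\gamma$ with $\Gr_\gamma(M)$ a single point, and this geometric condition should then unwind into $\gamma\perp(\alpha-\gamma)$ via Schofield's theory of general subrepresentations, the essential input being that over a hereditary algebra a rigid representation lies in the dense orbit of its dimension vector, hence \emph{is} a general representation of that dimension.

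For the direction ``$\gamma$ a vertex $\Rightarrow$ $\gamma\perp(\alpha-\gamma)$'': if $\gamma$ is a vertex, then $\Gr_\gamma(M)$ is a point by Theorem~\ref{T:intro1}. Since $M$ is rigid and $Q$ is acyclic, $M$ is a general representation of dimension $\alpha$, so $\Gr_\gamma(M)\neq\emptyset$ means precisely $\gamma\hookrightarrow\alpha$, which by Schofield is equivalent to $\ext(\gamma,\alpha-\gamma)=0$. Moreover $\Gr_\gamma(M)$ is smooth (a known smoothness result for quiver Grassmannians of rigid modules), and at its generic point $[L]$ the tangent space $\Hom(L,M/L)$ has dimension $\hom(\gamma,\alpha-\gamma)$: since $\ext(\gamma,\alpha-\gamma)=0$ the general subrepresentation of dimension $\gamma$ and its quotient are themselves general, so $\dim\Gr_\gamma(M)=\langle\gamma,\alpha-\gamma\rangle+\ext(\gamma,\alpha-\gamma)=\hom(\gamma,\alpha-\gamma)$. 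Hence $\Gr_\gamma(M)$ being a point forces $\hom(\gamma,\alpha-\gamma)=0$, and together with $\ext(\gamma,\alpha-\gamma)=0$ this is exactly $\gamma\perp(\alpha-\gamma)$.

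For the converse, assume $\gamma\perp(\alpha-\gamma)$. Then $\ext(\gamma,\alpha-\gamma)=0$, so $\gamma\hookrightarrow\alpha$ by Schofield, hence $\gamma$ lies in the support of $F_M$ and so in ${\N}(M)$; the dimension count above also gives $\dim\Gr_\gamma(M)=\hom(\gamma,\alpha-\gamma)=0$, so $\Gr_\gamma(M)$ is a finite reduced scheme, and by connectedness of quiver Grassmannians of rigid modules it is a single point, with unique subrepresentation $L\subseteq M$ of dimension $\gamma$. Suppose $\gamma$ were not a vertex; then $\gamma=\sum_i\lambda_i v_i$ with $\lambda_i>0$, $\sum_i\lambda_i=1$, and $v_i$ pairwise distinct vertices of ${\N}(M)$ (the vertices of the minimal face containing $\gamma$), each satisfying $v_i\perp(\alpha-v_i)$ by the direction just proved; thus $\langle v_i,v_i\rangle=\langle v_i,\alpha\rangle$ and $\langle\gamma,\gamma\rangle=\langle\gamma,\alpha\rangle$. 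Substituting $\gamma=\sum_i\lambda_i v_i$ and expanding collapses these identities to $\sum_{i<j}\lambda_i\lambda_j\langle v_i-v_j,\,v_i-v_j\rangle=0$. When $Q$ is Dynkin the Euler quadratic form is positive definite and each $v_i-v_j\neq 0$, so this is impossible and $\gamma$ is a vertex. For general acyclic $Q$ the Euler form is indefinite, so instead I would first show, from the long exact sequences attached to $0\to L\to M\to N\to 0$ with $N=M/L$ (using $\Ext^1(M,M)=0$, $\Hom(L,N)=0$, and hereditarity), that $L$ and $N$ are rigid with $\Ext^1(L,N)=0$, and then build from the projective presentation ($\g$-vector) of $N$ an explicit supporting functional $\ell$ of ${\N}(M)$ with $\ell(\gamma)>\ell(\dv L')$ for every subrepresentation $L'\subseteq M$ of a different dimension; alternatively, one might induct using the face-restriction formula for $F_M$ applied to the minimal face containing $\gamma$.

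\textbf{The main obstacle} is this general-$Q$ half of the converse --- equivalently, verifying that the converse of Theorem~\ref{T:intro1} \emph{does} hold for rigid modules over acyclic quivers even though it fails in general. This is the place where acyclicity and rigidity must both genuinely enter, and I expect the construction of the separating functional (or the inductive use of the face structure of ${\N}(M)$) to carry the bulk of the work.
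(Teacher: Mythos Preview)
Your forward direction is correct and matches the paper's: a vertex gives $\Gr_\gamma(M)$ a single point (Theorem~\ref{T:onept}), whence $\hom(\gamma,\alpha-\gamma)=0$ via the tangent-space identification (Lemma~\ref{L:subrep}.(2)) and $\ext(\gamma,\alpha-\gamma)=0$ by Schofield; the paper packages this as Proposition~\ref{P:vhoms} plus Lemma~\ref{L:subrep}.(1). Your Dynkin argument via the Tits form is a pleasant observation but, as you say, does not survive to general $Q$; it is orthogonal to the paper's approach. The appeal to ``connectedness of quiver Grassmannians of rigid modules'' is unnecessary for what follows and would itself need justification.

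The gap you flag is exactly the content of the converse, and the paper fills it in two independent ways. The second proof is essentially your ``separating functional'' idea carried out, but with the weight attached to $L$ rather than $N$: take $\delta=\langle\gamma,-\rangle$ (the $\delta$-vector of a general $\gamma$-dimensional representation), and show $\dv t_\delta(M)=\gamma$, so $\gamma$ is a vertex by Lemma~\ref{L:vertex}. The key computation, from the long exact sequence for $\Hom(L,-)$ applied to $0\to L'\to M\to N'\to 0$ and using $\Ext(L,M)=0$, is that $\delta(\dv L')=\hom(L,L)-\hom(L,N')$; so either $\hom(L,N')>0$ and $\delta(\dv L')<\hom(\delta,M)$, or $\hom(L,N')=0$ and then $L\subseteq L'$. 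This is short and self-contained once you choose the right functional.

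The paper's first proof takes a genuinely different route: assume $\gamma\perp\beta$ but $\gamma$ is not a vertex, pass to $nM$ so that $n\gamma$ is an \emph{integral} positive combination of vertices $\gamma_i$, and use $F_{nM}=F_M^n$ together with quantum positivity (Lemma~\ref{L:chipos}) to get $\chi(\Gr_{n\gamma}(nM))>1$. On the other hand, rigidity of one of $\gamma,\beta$ (from Proposition~\ref{P:vhoms}) and Derksen--Weyman's result that in this case $n\gamma\circ n\beta=1$ force $\Gr_{n\gamma}(nM)$ to be a point, a contradiction. This avoids constructing any functional at all, at the cost of invoking the external input $\gamma\circ\beta=1$.
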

\noindent Here $\gamma \perp \beta$ means $\Hom(L,N) = \Ext(L,N) = 0$ for some $L\in \rep_\gamma(Q)$ and $N\in \rep_\beta(Q)$.
The rigidity assumption is also necessary here (see Remark \ref{R:acyclicv}).

Since the rigid representations correspond to cluster monomials, these results motivate the following conjecture.
\begin{conjecture}[{Conjecture \ref{conj:Fvertex}}] Let $F=\sum_{\gamma}c_\gamma \b{y}^\gamma$ be the $F$-polynomial of a cluster monomial (of any cluster algebra).
	Then $\gamma$ is a vertex of the Newton polytope of $F$ if and only if $c_\gamma=1$.
\end{conjecture}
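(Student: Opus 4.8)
The plan is to split the equivalence into its two directions and to reduce the cluster-algebra statement to the representation theory of the earlier sections via categorification. For a skew-symmetric cluster algebra, \cite{FZ4} together with a cluster categorification (e.g.\ \cite{DWZ2}) writes the $F$-polynomial of a cluster monomial as $F_M$ for a \emph{rigid} representation $M$ of a Jacobian algebra of a quiver with potential, and the remark following Theorem \ref{T:intro1} identifies the Newton polytope of $F_M$ with $\N(M)$; for a general skew-symmetrizable cluster algebra one would instead use an appropriate categorifying finite-dimensional algebra, for instance a Geiss--Leclerc--Schr\"oer algebra \cite{GLSg}, since the results of this paper apply to any finite-dimensional basic algebra. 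Granting such a model, the ``only if'' direction is immediate: if $\gamma$ is a vertex of the Newton polytope of $F$, hence of $\N(M)$, then Theorem \ref{T:intro1} forces $\Gr_\gamma(M)$ to be a single point, so $c_\gamma = \chi(\Gr_\gamma(M)) = 1$.

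For the ``if'' direction I would exploit the geometry of quiver Grassmannians of rigid modules. Since $M$ is rigid, $\Gr_\gamma(M)$ is smooth and admits a cell decomposition, hence has cohomology only in even degrees (see the cell-decomposition results of Cerulli Irelli--Esposito--Franzen--Reineke); so $c_\gamma = \chi(\Gr_\gamma(M)) = 1$ forces $\Gr_\gamma(M)$ to be connected with no cohomology above degree $0$, and a smooth projective variety with that property is a point. Thus $\Gr_\gamma(M)$ is a single reduced point $\{L\}$, and since the tangent space of $\Gr_\gamma(M)$ at $L$ is $\Hom(L, M/L)$, we get $\Hom(L, M/L) = 0$. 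It therefore suffices to show that a rigid module $M$ with a unique subrepresentation of a given dimension vector $\gamma$ has $\gamma$ as a vertex of $\N(M)$ --- precisely the converse of Theorem \ref{T:intro1}, which the paper conjectures for cokernels of general presentations over Jacobian algebras, and rigid modules arise as such cokernels.

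In the hereditary case (acyclic quiver) this converse is within reach and I would prove it directly. Applying $\Hom(M,-)$ to $0 \to L \to M \to M/L \to 0$ and using $\Ext^1(M,M) = 0$ together with the hereditariness of $Q$ yields $\Ext^1(M, M/L) = 0$; applying $\Hom(-, M/L)$ to the same sequence then squeezes $\Ext^1(L, M/L)$ between vanishing groups, so $\Ext^1(L, M/L) = 0$ as well. The pair $(L, M/L)$ with $\Hom = \Ext^1 = 0$ witnesses $\gamma \perp (\alpha - \gamma)$, whence Theorem \ref{T:intro2} gives that $\gamma$ is a vertex of $\N(M)$. Thus the conjecture follows in full for cluster monomials of acyclic (more generally, hereditary) cluster algebras.

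The main obstacle is the converse of Theorem \ref{T:intro1} for rigid modules over a non-hereditary Jacobian (or Geiss--Leclerc--Schr\"oer) algebra, where no clean numerical criterion like Theorem \ref{T:intro2} is available. Here I would try to push the stabilization-functor description underlying the proof of Theorem \ref{T:intro1}, together with the properties of the exceptional subrepresentation collected in Proposition \ref{P:vhoms}, to show that if $\dv L$ were a proper convex combination of dimension vectors of subrepresentations of $M$, then $L$ could be deformed inside $\Gr_\gamma(M)$, contradicting the uniqueness of the point. A secondary, more bookkeeping difficulty is that a categorification of \emph{every} cluster algebra by a finite-dimensional algebra is not known, so for now the conjecture in full generality is reducible --- but not yet reduced --- to this representation-theoretic converse.
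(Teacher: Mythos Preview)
Your reading of the landscape is accurate: this statement is a \emph{conjecture} in the paper, not a theorem, and the paper proves it only for acyclic quivers (Corollary~\ref{C:Fvertex}), noting that the general skew-symmetric case would follow from Conjecture~\ref{conj:vertex}. Your proposal correctly mirrors this: you settle the acyclic case and identify the general case as resting on the unproved converse of Theorem~\ref{T:onept}.

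For the acyclic case your argument is essentially the paper's. Both use Theorem~\ref{T:onept} for the ``only if'' direction, and for ``if'' both use that $\Gr_\gamma(M)$ is smooth with cohomology concentrated in even degrees (the paper via \cite{S2} and the quantum positivity in Lemma~\ref{L:chipos}; you via cell decompositions), so $\chi=1$ forces $\Gr_\gamma(M)$ to be a point, hence $\Hom(L,M/L)=0$. The only genuine difference is how you get $\ext(\gamma,\beta)=0$: the paper invokes Lemma~\ref{L:subrep}(1) (a general $M$ has a $\gamma$-dimensional subrepresentation iff $\ext(\gamma,\beta)=0$), whereas you run the two long exact sequences from $0\to L\to M\to M/L\to 0$ and use $\Ext^2=0$. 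Both are correct and equally short; your route has the mild advantage of producing the explicit witness $(L,M/L)$ for $\gamma\perp\beta$ rather than appealing to genericity.

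One caution about your general-case sketch: the claim that $\Gr_\gamma(M)$ is smooth with a cell decomposition for rigid $M$ over an arbitrary Jacobian algebra is not established in the literature you cite, so even getting from $c_\gamma=1$ to ``$\Gr_\gamma(M)$ is a point'' is already delicate outside the hereditary world. The paper sidesteps this by packaging everything into Conjecture~\ref{conj:vertex} (vertex $\Leftrightarrow$ $\Hom(L,M/L)=0$ for $\E$-birigid $M$), which is exactly the converse you isolate. Your appeal to a GLS-type categorification for the skew-symmetrizable case is also optimistic: \cite{GLSg} concerns generic bases for certain unipotent-cell cluster algebras, not a general skew-symmetrizable categorification, so that bookkeeping difficulty is more serious than you suggest.
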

\noindent The conjecture is settled for the acyclic quiver case (Corollary \ref{C:Fvertex}). Two other related conjectures are Conjecture \ref{conj:onept} and \ref{conj:vertex}.

We have a complete and general solution to the second part of Problem \ref{p:b}.
By the restriction of a polynomial $F=\sum_{\gamma} c_\gamma \b{y}^\gamma$ to some face ${\sf \Lambda}$ of its Newton polytope,
we mean
$$\sum_{\gamma\mid \gamma\in \sf\Lambda} c_\gamma \b{y}^\gamma.$$
\begin{theorem}[Theorem \ref{T:faces}] \label{T:f} 	Let $\delta$ be the outer normal vector of some facet of the Newton polytope $\N(M)$.	
	Then the restriction of $F_M$ to this facet is given by
	$$ \b{y}^{\dv t_\dtb(M)} \iota_{\delta} \left( F_{ \pi_{\delta} (M) } \right).$$
\end{theorem}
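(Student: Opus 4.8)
The plan is to describe the facet ${\sf\Lambda}$ combinatorially in terms of subrepresentations of $M$, then collapse the quiver Grassmannians attached to ${\sf\Lambda}$ onto those of the stabilized module $\pi_\delta(M)$, and finally read off the substitution $\iota_\delta$ from the dimension–vector bookkeeping. First I would record that, since $\N(M)$ is the convex hull of $\{\dv L\mid L\hookrightarrow M\}$ and $\delta$ is an outer normal of ${\sf\Lambda}$, a lattice point $\gamma\in\N(M)$ lies on ${\sf\Lambda}$ precisely when $\gamma=\dv L$ for some $L\hookrightarrow M$ realizing the maximal value $c:=\max_{L'\hookrightarrow M}\innerprod{\delta,\dv L'}$. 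Using $\dv(L_1+L_2)+\dv(L_1\cap L_2)=\dv L_1+\dv L_2$, the $\delta$-maximal subrepresentations form a sublattice of the subobject lattice of $M$; let $N^-$ and $N^+$ be its least and greatest elements. I expect that $N^-=t_\dtb(M)$ (it is the largest submodule all of whose nonzero quotients have strictly positive $\delta$-value, i.e.\ the torsion submodule for the torsion pair attached to $\dtb$), that $\innerprod{\delta,\dv N^-}=\innerprod{\delta,\dv N^+}=c$, and that $N^+/N^-$ is the $\delta$-semistable (of $\delta$-value $0$) subquotient of $M$ produced by the stabilization functor, so that $N^+/N^-\cong\pi_\delta(M)$ as an object of the abelian category $\mathcal{A}_\delta$ of $\delta$-semistable modules of $\delta$-value $0$. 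A short argument ($L\cap N^-$ and $L+N^+$ are again $\delta$-maximal) then shows every $\delta$-maximal $L$ satisfies $N^-\subseteq L\subseteq N^+$.

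Next I would reduce the Grassmannians. For $\gamma\in{\sf\Lambda}$, every point $L$ of $\Gr_\gamma(M)$ lies between $N^-$ and $N^+$, so $L\mapsto L/N^-$ is an isomorphism of varieties $\Gr_\gamma(M)\xrightarrow{\sim}\Gr_{\gamma-\dv N^-}\bigl(\pi_\delta(M)\bigr)$, whence $\chi(\Gr_\gamma(M))=\chi\bigl(\Gr_{\gamma-\dv N^-}(\pi_\delta(M))\bigr)$ and
\[
\sum_{\gamma\in{\sf\Lambda}}\chi(\Gr_\gamma(M))\,\b{y}^{\gamma}=\b{y}^{\dv N^-}\!\!\sum_{\innerprod{\delta,\gamma'}=0}\!\!\chi\bigl(\Gr_{\gamma'}(\pi_\delta(M))\bigr)\,\b{y}^{\gamma'}.
\]
It remains to identify the right-hand sum with $\iota_\delta\bigl(F_{\pi_\delta(M)}\bigr)$.

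Finally I would argue: a subrepresentation $W$ of $\pi_\delta(M)$ has $\innerprod{\delta,\dv W}=0$ if and only if it is $\delta$-semistable, i.e.\ a subobject in $\mathcal{A}_\delta$, and for such $\gamma'$ the quiver Grassmannian $\Gr_{\gamma'}(\pi_\delta(M))$ coincides with the Grassmannian of $\mathcal{A}_\delta$-subobjects; passing through the stabilization functor (the tilting/Harder–Narasimhan equivalence attached to $\delta$) identifies the latter with a quiver Grassmannian of $\pi_\delta(M)$ over the stabilized algebra, the new dimension vector recording the multiplicities of the $\delta$-stable simples $S_j$. Since then $\gamma'=\sum_j(\text{mult.})_j\,\dv S_j$, the monomial $\b{y}^{\gamma'}$ is exactly the image under $\iota_\delta$ of the corresponding monomial of $F_{\pi_\delta(M)}$, and summing yields Theorem~\ref{T:f}.

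The hard part will be this last step: one must verify that the stabilization functor is exact and compatible with forming representation Grassmannians, so that $\Gr_{\gamma'}(\pi_\delta(M))$ really is a quiver Grassmannian over the stabilized algebra and the dimension-vector dictionary is precisely the map $\iota_\delta$ set up in the earlier sections. (The facet hypothesis fixes $\delta$ up to positive scaling, making $\pi_\delta(M)$ and $t_\dtb(M)$ unambiguous; the same argument should in fact produce the restriction of $F_M$ to an arbitrary face.) The combinatorial steps above are routine once the stabilization machinery of the preceding sections is in hand.
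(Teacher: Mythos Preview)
Your outline follows essentially the same route as the paper's proof: identify the facet with $\mc{L}(\delta,M)$, sandwich every such $L$ between $L_{\min}=t_{\dtb}(M)$ and $L_{\max}=\tc_{\dtb}(M)$, pass to $\dtb^\perp(M)=L_{\max}/L_{\min}$ via $L\mapsto L/L_{\min}$, and then invoke the $\delta$-stable reduction to land in $\rep(A_{\delta,\dtb^\perp(M)})$. Two points need tightening.

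First, you conflate $\pi_\delta(M)$ with $N^+/N^-$. In the paper's conventions $N^+/N^-=\dtb^\perp(M)$ is still a representation of $A$, whereas $\pi_\delta(M)=\rho_\delta(\dtb^\perp(M))$ lives over the new algebra $A_{\delta,\dtb^\perp(M)}$; the expression $\Gr_{\gamma-\dv N^-}(\pi_\delta(M))$ is therefore ill-typed. What you prove in your second paragraph is $\Gr_\gamma(M)\cong\Gr_{\gamma-\gamma_{\min}}(\dtb^\perp(M))$, and the passage to $\pi_\delta(M)$ is a separate step.

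Second, and more substantively, your last paragraph implicitly assumes that a given $\gamma'=\gamma-\gamma_{\min}$ determines a unique multiplicity vector $m$, i.e.\ that $\iota_\delta$ is injective. This fails whenever the dimension vectors $\dv V_i$ of the $\delta$-stable factors are linearly dependent (e.g.\ two non-isomorphic $\delta$-stables of the same dimension). The paper handles this by the stratification
\[
\Gr_{\gamma'}(\dtb^\perp(M))=\bigsqcup_{m\,:\,\iota_\delta(m)=\gamma'}\Gr_m^\delta(\dtb^\perp(M)),
\]
then uses Corollary~\ref{C:red} to identify each stratum with $\Gr_m(\pi_\delta(M))$, and finally sums Euler characteristics. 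Your ``the new dimension vector recording the multiplicities'' should be replaced by this disjoint-union argument; once that is done, your computation and the paper's coincide. The ``hard part'' you flag (compatibility of $\rho_\delta$ with sub-Grassmannians) is exactly Lemma~\ref{L:rho}/Corollary~\ref{C:red}, which the paper imports from \cite{KKO}.
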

\noindent Here, $t_\dtb$ is the functor constructed in Section \ref{ss:torsion} (see also Theorem \ref{T:tf});
$\pi_\delta(M)$ is a representation of another quiver, which is explained in Section \ref{ss:stablered}, and $\iota_\delta$ is a certain monomial change of variables.
This result can be easily generalized to arbitrary faces of codimension greater than 1 (see Remark \ref{R:anyfaces}).

For a general representation $M$ of an acyclic quiver, we have a complete description of the normal vectors of $\N(M)$. We thus solve the first part of Problem \ref{p:b} in a special case.
Consider the rational polyhedral cones 
$$\cone{\sf \Delta}_0(M)=\{\delta \mid \delta(v)\leq 0, v\in \V(M) \} \text{ and } \cone{\sf \Delta}_1(M)=\{\delta \mid \delta(\alpha-v)\geq 0, v\in {\V}(M) \}.$$  
Here, $\V(M)$ is the set of vertices in $\N(M)$. See \eqref{eq:Delta01} for another description for these cones.
Let ${\sf R}_0(M)$ and ${\sf R}_1(M)$ be the extremal rays of $\cone{\sf \Delta}_0(M)$ and $\cone{\sf \Delta}_1(M)$.
\begin{theorem}[Theorem \ref{T:Newton} and Corollary \ref{C:Newton}] \label{T:intro4} Let $M$ be an $\alpha$-dimensional general representation of an acyclic quiver.
Then the outer normal vectors of $\N(M)$ are precisely ${\sf R}_0(M) \cup {\sf R}_1(M)$, 
and $\N(M)$ can be presented as
$$\{ \gamma\in\mb{R}^{Q_0} \mid \delta_0(\gamma) \leq 0 \text{ for $\delta_0 \in {\sf R}_0(M)$ and } {\delta}_1(\alpha-\gamma) \geq 0 \text{ for ${\delta}_1 \in {\sf R}_1(M)$} \}.$$	
Moreover, if $M$ is rigid, then the rays in ${\sf R}_0(M)$ and ${\sf R}_1(M)$ are either $-{e}_i$ or correspond to real Schur roots.
\end{theorem}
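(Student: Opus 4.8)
The plan is to repackage the statement via normal cones, reduce it to one geometric fact about $\N(M)$, and then establish that fact with the facet restriction formula. Since $0=\dv 0$ and $\alpha=\dv M$ are always subrepresentation dimension vectors, and every element of $\N(M)$ is componentwise between $0$ and $\alpha$, both $0$ and $\alpha$ are vertices of $\N(M)$. Using $\N(M)=\op{conv}\V(M)$ one identifies $\cone{\sf \Delta}_0(M)$ and $\cone{\sf \Delta}_1(M)$ with the outer normal cones of $\N(M)$ at the vertices $0$ and $\alpha$ respectively; the description \eqref{eq:Delta01}, read through the torsion pairs of Section \ref{ss:torsion}, says these are the weights $\delta$ for which the $\delta$-torsion subobject of $M$ (resp.\ its $\delta$-torsion-free quotient) has $\delta$-degree zero. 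By the standard anti-isomorphism between the poset of faces of a polytope through a vertex and the face poset of the normal cone there, ${\sf R}_0(M)$ (resp.\ ${\sf R}_1(M)$) is exactly the set of outer normals of facets of $\N(M)$ through $0$ (resp.\ through $\alpha$). Hence all three assertions follow from the claim that \emph{every facet of $\N(M)$ contains $0$ or contains $\alpha$}: the polytope presentation is then the facet description of $\N(M)$, a facet through $0$ contributing $\delta_0(\gamma)\le\delta_0(0)=0$ and one through $\alpha$ contributing $\delta_1(\alpha)=\max_{\N(M)}\delta_1$, i.e.\ $\delta_1(\alpha-\gamma)\ge 0$, and the enumeration of normals is immediate. (One reduces at the outset to $\alpha$ sincere by passing to the subquiver on $\op{supp}\alpha$.)

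To prove the claim I would invoke Theorem \ref{T:faces}. Fix a facet $\Lambda$ of $\N(M)$ with primitive outer normal $\delta$ and reduction $\dtb$. On Newton polytopes Theorem \ref{T:faces} gives $\Lambda=\dv t_\dtb(M)+\iota_\delta\bigl(\N(\pi_\delta(M))\bigr)$. Unwinding Sections \ref{ss:torsion} and \ref{ss:stablered}: $M$ carries a three-step filtration $0\subseteq M^{>}_\delta\subseteq M^{\ge}_\delta\subseteq M$ with $M^{>}_\delta$ the $\delta$-torsion subobject (all Harder--Narasimhan slices of strictly positive slope), $\pi_\delta(M)\cong M^{\ge}_\delta/M^{>}_\delta$ the slope-zero semistable part, and $M/M^{\ge}_\delta$ of strictly negative slopes; the subrepresentations $L\hookrightarrow M$ with $\delta(\dv L)$ maximal are exactly those with $M^{>}_\delta\subseteq L\subseteq M^{\ge}_\delta$, and $L\mapsto L/M^{>}_\delta$ identifies them with the subobjects of $\pi_\delta(M)$ in the slope-zero category $\mc{A}_\delta$, which is $\rep(Q_\delta)$. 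In particular $t_\dtb(M)=M^{>}_\delta$, the map $\iota_\delta$ embeds the dimension-vector lattice of $\rep(Q_\delta)$ into the hyperplane $\delta^{\perp}$, and $0\in\Lambda$ exactly when $M^{>}_\delta=0$ while $\alpha\in\Lambda$ exactly when $M^{\ge}_\delta=M$. Thus the claim reduces to: \emph{for $M$ general and $\delta$ a facet normal, $M$ has no subobject of strictly positive $\delta$-slope, or no quotient of strictly negative $\delta$-slope.}

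This genericity statement is the hard part. Its framework is a dimension count: if a general $M$ had both, then $\dv M^{>}_\delta\neq 0$ and $\dv M^{\ge}_\delta\neq\alpha$, and every $L$ contributing to $\Lambda$ satisfies $M^{>}_\delta\subseteq L\subseteq M^{\ge}_\delta$, so $\Lambda$ lies in the box $[\dv M^{>}_\delta,\dv M^{\ge}_\delta]$ and has dimension at most $|\op{supp}_Q\dv\pi_\delta(M)|\le|(Q_\delta)_0|\le|Q_0|-1$; since $\Lambda$ is a facet these are all equalities, forcing the slope-zero reduction $Q_\delta$ to have exactly $|Q_0|-1$ vertices and $\pi_\delta(M)$ to be sincere over it. So the task is to show that a general representation cannot have a $\delta$-Harder--Narasimhan type that uses slopes on both sides of $0$ together with a slope-zero piece whose reduction has full rank $|Q_0|-1$. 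I would derive a contradiction from the theory of generic Harder--Narasimhan filtrations and canonical decompositions (Schofield, Derksen--Weyman): one argues that the $\ext$-rigidity among the canonical summands of $\alpha$, which propagates to the generic HN pieces, obstructs having simultaneously a nonzero positive-slope piece, a nonzero negative-slope piece, and a rank-$(|Q_0|-1)$ slope-zero reduction. Making this precise — the compatibility combinatorics of a facet normal against the canonical decomposition of $\alpha$ — is where the real work lies, and I expect it to be the main obstacle of the proof.

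Finally, for the rigidity addendum, let $M$ be $\alpha$-dimensional and rigid. Theorem \ref{T:acyclicv} describes $\V(M)$ through the relation $\perp$, and $\pi_\delta(M)$ is then rigid in the hereditary category $\mc{A}_\delta$. For a facet $\Lambda$ through $0$ the previous step gives $\iota_\delta(\N(\pi_\delta(M)))=\Lambda$ of dimension $|Q_0|-1=|(Q_\delta)_0|$, so $\mc{A}_\delta$ has $|Q_0|-1$ simple objects, each $\delta$-stable and rigid, hence exceptional; their dimension vectors are real Schur roots of $Q$ spanning $\delta^{\perp}$, and the outer normal $\delta$, being the essentially unique functional vanishing on that span, is recognized via the Euler pairing as $\langle\beta,-\rangle$ for a real Schur root $\beta$ — unless $\Lambda$ is the coordinate slice $\{\gamma_i=0\}\cap\N(M)$, which contributes $\delta=-e_i$. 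The facets through $\alpha$ and the rays of ${\sf R}_1(M)$ are handled symmetrically by passing to quotients, i.e.\ working over $Q^{\mathrm{op}}$, which also accounts for the $-e_i$ appearing there.
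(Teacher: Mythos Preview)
Your reformulation is sound: identifying $\cone{\sf\Delta}_0(M)$ and $\cone{\sf\Delta}_1(M)$ with the normal cones of $\N(M)$ at $0$ and at $\alpha$, and reducing the theorem to the claim that \emph{every facet of $\N(M)$ passes through $0$ or through $\alpha$}, is exactly right and is equivalent to what the paper proves. But your attempt at that claim has a genuine gap, and you flag it yourself: the argument via generic Harder--Narasimhan filtrations and canonical decompositions is only a framework, and the sentence ``Making this precise\dots\ is where the real work lies'' is an admission that the proof is missing. Nothing in your dimension count actually rules out a facet normal $\delta$ with both $M^{>}_\delta\neq 0$ and $M^{\ge}_\delta\neq M$; Example~\ref{ex:heneq0} shows this does occur for $\E$-rigid representations of Jacobian algebras, so acyclicity and genericity must enter in a way you have not located.

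The paper sidesteps the whole difficulty with a trick you do not consider. Rather than study the $\delta$-torsion decomposition of $M$, it studies the $\delta_\alpha$-torsion decomposition of $C=\Coker(\delta)$, where $\delta_\alpha=\innerprod{\alpha,-}$ is the weight of $M$ itself. Applying $(t,f)$ for the weight $\delta_\alpha$ to $C$ gives $0\to t(C)\to C\to f(C)\to 0$ with $\Hom(t(C),M)=0$ and $\Ext(f(C),M)=0$ (the dual of Lemma~\ref{L:sub=h}). Since an indecomposable $\delta\neq -e_i$ has generically injective $\PHom(\delta)$ in the hereditary case, one gets $\delta=\delta_{t(C)}+\delta_{f(C)}$ with $\delta_{t(C)}\in{\sf\Delta}_0(M)$ and $\delta_{f(C)}\in{\sf\Delta}_1(M)$; the facet inequality for $\delta$ is then the sum of two inequalities already defining $\N'(M)$, which gives $\N'(M)\subseteq\N(M)$ immediately. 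Switching the roles of $M$ and $C$ replaces the compatibility combinatorics you anticipated by two lines of algebra.

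Your argument for the rigid addendum is also incomplete. That the $\delta$-stable Jordan--H\"older factors of $\dtb^\perp(M)$ are rigid does not follow from $\delta$-stability alone, and your passage from ``simple in $\mc{A}_\delta$'' to ``exceptional in $\rep(Q)$'' is asserted rather than proved. The paper argues directly about an extremal $\ep\in{\sf R}_1(M)$: if $\hom(\ep,M)>0$, Lemma~\ref{L:codim2} (which rests on Lemma~\ref{L:perp}) shows that were $\dv\Coker(\ep)$ imaginary, $\mc{L}(\ep,M)$ would have codimension at least $2$, contradicting that $\ep$ is a facet normal; if $\hom(\ep,M)=0$, extremality forces $\Coker(\ep)$ to be simple in ${}^\perp M$, hence real Schur by Schofield. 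The case of ${\sf R}_0(M)$ is then dual.
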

\noindent In this case Theorem \ref{T:f} reduces to a very nice form -- Corollary \ref{C:acyclicf}.
We also show by example that this description does not work in general, even under a rather strong rigidity condition on $M$ (see Remark \ref{R:Newton}).

For Problem \ref{p:c}, following \cite{MTY} we say the support of a polynomial $F=c_\gamma \b{x}^\gamma$ {\em saturated} if $c_\gamma\neq 0$ for any lattice points in the Newton polytope of $F$.
Similarly we say the sub-lattice of a representation $M$ {\em saturated} if for any lattice point $\gamma\in \N(M)$ there is some $\gamma$-dimensional subrepresentation $L\hookrightarrow M$.
It is not clear if the saturation of the support of $F_M$ is equivalent to the saturation of the sub-lattice of $M$. 

We prove the saturation property for a rigid representation of an acyclic quiver, and show by example that the rigidity is necessary (see Remark \ref{R:saturation}).
\begin{theorem}[Theorem \ref{T:saturation}] \label{T:intro5} Let $M$ be a rigid representation of an acyclic quiver. Then both the sub-lattice of $M$ and the support of $F_M$ are saturated.
\end{theorem}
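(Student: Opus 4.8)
The plan is to reduce both claims to the single assertion that every lattice point of $\N(M)$ is a generic subdimension vector of $\alpha:=\dv M$. Since $Q$ is acyclic and $\Ext^1(M,M)=0$, the orbit of $M$ in the affine variety $\rep_\alpha(Q)$ is dense, so $M$ is a general representation of dimension vector $\alpha$. The locus of representations in $\rep_\alpha(Q)$ admitting a subrepresentation of dimension $\gamma$ is the image of a projective morphism (the relative quiver Grassmannian over $\rep_\alpha(Q)$), hence closed, so it is either disjoint from or contains the dense orbit of $M$. Therefore $\Gr_\gamma(M)\neq\varnothing$ exactly when $\gamma$ is a generic subdimension vector of $\alpha$, written $\gamma\hookrightarrow\alpha$. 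By Theorem \ref{T:intro1} the Newton polytope of $F_M$ equals $\N(M)$. Thus ``the sub-lattice of $M$ is saturated'' is the inclusion $\N(M)\cap\mb{Z}^{Q_0}\subseteq\{\gamma\mid\gamma\hookrightarrow\alpha\}$, the reverse inclusion being obvious. For the support of $F_M$ it suffices in addition that $\chi(\Gr_\gamma(M))>0$ whenever $\Gr_\gamma(M)\neq\varnothing$; this holds because for a rigid module over an acyclic quiver each quiver Grassmannian $\Gr_\gamma(M)$ is known to admit a cell decomposition, in particular has no odd cohomology, so a nonempty one has Euler characteristic equal to its positive number of cells. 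Hence saturation of the support follows from saturation of the sub-lattice.

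\textbf{The inclusion via Schofield's criterion.} By Schofield's criterion for general subrepresentations, $\gamma\hookrightarrow\alpha$ if and only if $\ext(\gamma,\alpha-\gamma)=0$, and $\ext(\gamma,\alpha-\gamma)=\max\{-\langle\gamma',\alpha-\gamma\rangle\mid\gamma'\hookrightarrow\gamma\}$. Fix a lattice point $\gamma\in\N(M)$, and suppose $\ext(\gamma,\alpha-\gamma)>0$; standard manipulations with the canonical decomposition let us choose a Schur root $\gamma'\hookrightarrow\gamma$ with $\langle\gamma',\alpha-\gamma\rangle<0$. The key point is that, because $M$ is rigid, $\gamma'$ is forced to be a \emph{real} Schur root for which the functional $\eta\mapsto\langle\gamma',\eta\rangle$ (equivalently $\eta\mapsto\langle\eta,\gamma'\rangle$) belongs to $\cone{\sf\Delta}_1(M)$, i.e. is a nonnegative combination of the rays in ${\sf R}_1(M)$; equivalently $\langle\gamma',\alpha-v\rangle\geq 0$ for every vertex $v$ of $\N(M)$. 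Granting this, the facet description of $\N(M)$ in Theorem \ref{T:intro4} applied to $\gamma$ gives $\langle\gamma',\alpha-\gamma\rangle\geq 0$, a contradiction. Hence $\ext(\gamma,\alpha-\gamma)=0$, i.e. $\gamma\hookrightarrow\alpha$, which is the desired inclusion.

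\textbf{Boundary points, for illustration.} The lattice points on the boundary of $\N(M)$ also have an inductive treatment on $d:=\dim\N(M)$ that exhibits the mechanism transparently; for $d=0$ we have $M=0$ and there is nothing to prove. A vertex of $\N(M)$ is $\dv L$ for some $L\hookrightarrow M$ by definition of $\N(M)$. If $\gamma$ lies on a facet $\Lambda$ with outer normal $\delta\in{\sf R}_0(M)\cup{\sf R}_1(M)$, then by Theorem \ref{T:f} --- in the rigid acyclic case, Corollary \ref{C:acyclicf} --- the restriction of $F_M$ to $\Lambda$ equals $\b{y}^{\dv t_\dtb(M)}\,\iota_\delta\bigl(F_{\pi_\delta(M)}\bigr)$, where $\pi_\delta(M)$ is again a rigid representation of an acyclic quiver with $\dim\N(\pi_\delta(M))=\dim\Lambda<d$, and $\iota_\delta$ matches the lattice points of $\N(\pi_\delta(M))$ with those of $\Lambda$ translated by $\dv t_\dtb(M)$. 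By induction every lattice point of $\N(\pi_\delta(M))$ lies in the support of $F_{\pi_\delta(M)}$; since $F_M$ has nonnegative coefficients there is no cancellation, so $\gamma$ lies in the support of $F_M$.

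\textbf{The main obstacle.} What the inductive route does not reach are the \emph{interior} lattice points of $\N(M)$, and these are exactly where the key point above does the real work: one must show that the real Schur roots $\gamma'$ that can witness $\ext(\gamma,\alpha-\gamma)>0$ for a lattice point $\gamma\in\N(M)$ are already accounted for by the facet functionals ${\sf R}_0(M)\cup{\sf R}_1(M)$ of Theorem \ref{T:intro4}. Rigidity of $M$ is indispensable here: Remark \ref{R:saturation} shows that without it the set of generic subdimension vectors can acquire genuine holes inside its convex hull, because constraints not among ${\sf R}_0(M)\cup{\sf R}_1(M)$ come into play. Controlling the admissible $\gamma'$ through the canonical decomposition of $\alpha$, and identifying them with the extremal rays in Theorem \ref{T:intro4}, is the principal technical difficulty.
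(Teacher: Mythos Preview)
Your reduction is correct and matches the paper's: everything comes down to showing $\ext(\gamma,\alpha-\gamma)=0$ for every lattice point $\gamma\in\N(M)$, after which Lemma~\ref{L:subrep}(1) and positivity of $\chi(\Gr_\gamma(M))$ finish the job. The positivity step is fine (the paper uses quantum positivity, Lemma~\ref{L:chipos}, rather than a cell decomposition, but either suffices).

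The gap is exactly where you say it is. You assert as the ``key point'' that whenever $\gamma'\hookrightarrow\gamma$ witnesses $\ext(\gamma,\alpha-\gamma)>0$ via $\langle\gamma',\alpha-\gamma\rangle<0$, the weight $\delta_{\gamma'}=\langle\gamma',-\rangle$ lies in $\cone{\sf\Delta}_1(M)$, i.e.\ $\ext(\gamma',\alpha)=0$. You never prove this, and in your final paragraph you concede it is ``the principal technical difficulty''. As written this is circular: the natural route to $\ext(\gamma',\alpha)=0$ is $\gamma'\hookrightarrow\gamma\hookrightarrow\alpha$ together with rigidity of $M$, but $\gamma\hookrightarrow\alpha$ is precisely what you are trying to establish. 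Nor is there any a~priori reason the witnesses $\gamma'$ are constrained to the extremal rays ${\sf R}_1(M)$; invoking Theorem~\ref{T:Newton} here does not help, because that theorem describes $\N(M)$, not which Schur roots can arise as $\gamma'\hookrightarrow\gamma$ for an arbitrary lattice point $\gamma$.

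The paper bypasses this entirely with a short bilinear convexity argument that does not use Theorem~\ref{T:Newton} at all. For a vertex $\gamma_0$ of $\N(M)$ and a vertex $\beta_0$ of $\check{\N}(M)$, rigidity of $M$ forces $\ext(\gamma_0,\beta_0)=0$ (embed the vertex subrepresentation, project to the vertex quotient, and use that $\Ext(-,M)$ and $\Ext(L_0,-)$ are right exact in the hereditary case). Now Corollary~\ref{C:HE0}/Remark~\ref{r:HE0} says that, for fixed $\gamma_0$, the set $\{\beta:\ext(\gamma_0,\beta)=0\}$ is a polyhedral cone; since it contains every vertex of $\check{\N}(M)$ it contains all of $\check{\N}(M)$. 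Repeating the same convexity in the first variable (fixing an arbitrary lattice $\beta\in\check{\N}(M)$) gives $\ext(\gamma,\beta)=0$ for every lattice $\gamma\in\N(M)$. Taking $\beta=\alpha-\gamma$ yields the claim in one stroke, with no need to identify or control individual Schur roots.
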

\noindent More generally, we have the following conjecture. The above theorem settles the conjecture in the acyclic quiver case.

\begin{conjecture}[{Conjecture \ref{conj:saturation}}] The support of the $F$-polynomial of a cluster monomial (of any cluster algebra) is saturated.
\end{conjecture}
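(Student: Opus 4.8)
The plan is to reduce the saturation of the support of $F_M$ to the saturation of the sub-lattice of $M$, and then to establish the latter by an inductive argument on dimension vectors using the combinatorial structure of $\N(M)$ supplied by Theorem~\ref{T:intro4}. For the reduction, observe that by Theorem~\ref{T:intro1} every vertex $\gamma$ of $\N(M)$ satisfies $\chi(\Gr_\gamma(M))=1\neq 0$, so $\gamma$ lies in the support of $F_M$; more importantly, if the sub-lattice is saturated then every lattice point $\gamma\in\N(M)$ admits a subrepresentation $L\hookrightarrow M$ with $\dv L=\gamma$, so $\Gr_\gamma(M)\neq\varnothing$. For a rigid $M$ over an acyclic quiver I expect $\Gr_\gamma(M)$ to have a cell decomposition (or at least to be connected with the right parity), forcing $\chi(\Gr_\gamma(M))\neq 0$; this is where rigidity is genuinely used. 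Thus it suffices to prove the sub-lattice of a rigid $M$ is saturated.

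For the sub-lattice, I would argue by induction on $\dim M$, peeling off the facets of $\N(M)$ described in Theorem~\ref{T:intro4}. Let $\gamma\in\N(M)$ be a lattice point. If $\gamma$ lies on the boundary, then it lies on some facet with outer normal $\delta\in{\sf R}_0(M)\cup{\sf R}_1(M)$; by Theorem~\ref{T:f} the restriction of $F_M$ to that facet is $\b{y}^{\dv t_\dtb(M)}\iota_\delta(F_{\pi_\delta(M)})$, and $\pi_\delta(M)$ is again a rigid representation of an acyclic quiver of strictly smaller dimension (using that $\pi_\delta$ and $t_\dtb$ preserve rigidity over acyclic quivers, which should follow from their construction via torsion pairs / stability). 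By induction the support of $F_{\pi_\delta(M)}$ is saturated, so the shifted lattice point $\gamma$ on the facet is realized by a subrepresentation. If $\gamma$ is an interior lattice point, I would instead produce it directly: pick a vertex $v\in\V(M)$ and move from $v$ toward $\gamma$, using that the rigidity of $M$ guarantees that the Grassmannian filtration $0\subseteq L_1\subseteq L_2\subseteq M$ of subrepresentations behaves like that of a multiplicity-free module, so the set of dimension vectors of subrepresentations is closed under the relevant lattice operations. Concretely, over an acyclic quiver a rigid $M$ decomposes as a direct sum of pairwise non-isomorphic exceptional summands (after refining), and the subrepresentation dimension vectors form exactly the lattice points of $\N(M)$ because one can take direct sums and kernels of maps between exceptional pieces.

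The main obstacle is the interior-point case: unlike the facet case, there is no ambient induction to lean on, and one must show that $\N(M)$ contains \emph{no} lattice-point gaps in its interior. I would attack this via the description $\N(M)=\{\gamma\mid \delta_0(\gamma)\le 0,\ \delta_1(\alpha-\gamma)\ge 0\}$ from Theorem~\ref{T:intro4}: the key point is that for rigid $M$ the rays in ${\sf R}_0(M)$ are either $-e_i$ or real Schur roots, and dually for ${\sf R}_1(M)$, so $\N(M)$ is cut out by a very restricted class of half-spaces. I expect that for each real Schur root $\delta$ appearing, the corresponding facet inequality is \emph{tight} in the sense that $\delta(\gamma)\le 0$ already picks out $\Hom$-vanishing subrepresentations, and combining all such inequalities one can build, for any interior lattice $\gamma$, a subrepresentation by intersecting the canonical sub- and quotient-filtrations attached to each ray. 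A secondary subtlety is confirming that $\iota_\delta$ maps lattice points to lattice points bijectively onto $\N(M)\cap(\text{facet})$ shifted by $\dv t_\dtb(M)$, so that the inductive step transfers saturation cleanly; this should be immediate from $\iota_\delta$ being a monomial (hence lattice-isomorphism) change of variables, but it must be checked that the facet of $\N(M)$ is exactly the image, which is precisely the content of Theorem~\ref{T:f} together with Theorem~\ref{T:intro4}. Finally, the example showing rigidity is necessary (Remark~\ref{R:saturation}) confirms that the cell-decomposition / exceptional-summand input cannot be dispensed with.
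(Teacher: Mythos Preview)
The statement you are addressing is a \emph{conjecture}: the paper states it as Conjecture~\ref{conj:saturation} and does not prove it. What the paper does prove is the special case of rigid representations over acyclic quivers, namely Theorem~\ref{T:saturation}. Your proposal, despite being framed as an attack on the conjecture, only argues in that acyclic setting: you invoke Theorem~\ref{T:intro4} (proved only for acyclic quivers), use that $M$ decomposes into exceptional summands, and rely on cell decompositions that are known only in the hereditary case. So the general conjecture is untouched by your argument, just as it is by the paper.

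Restricting attention to the acyclic theorem, your route is both more elaborate than the paper's and has real gaps. The paper's proof of Theorem~\ref{T:saturation} is short and uses no induction or facet-peeling: for any vertex $\gamma_0\in\V(M)$ and any vertex $\beta_0\in\check{\V}(M)$ one has $\ext(\gamma_0,\beta_0)=0$ (because $\Ext(L_0,N_0)$ is a subquotient of $\Ext(M,M)=0$ in the hereditary case); then Corollary~\ref{C:HE0} / Remark~\ref{r:HE0} says that the locus $\{\beta:\ext(\gamma_0,\beta)=0\}$ is cut out by linear inequalities, hence is convex, so the vanishing propagates from the vertices of $\check\N(M)$ to all its lattice points, and symmetrically in $\gamma$. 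Schofield's criterion (Lemma~\ref{L:subrep}(1)) then produces a subrepresentation of every lattice dimension $\gamma\in\N(M)$, and quantum positivity (Lemma~\ref{L:chipos}) gives $\chi(\Gr_\gamma(M))>0$. There is no separate boundary/interior dichotomy.

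Your inductive scheme, by contrast, leaves several steps unjustified. You assert that $\pi_\delta(M)$ is again rigid over an acyclic quiver, but this is not established (and the quiver $Q_\epsilon$ need not be a subquiver of $Q$). You need $\iota_\delta$ to be a bijection between lattice points of $\N(\pi_\delta(M))$ and lattice points of the facet, which can fail when the $\delta$-stable factors have non-primitive dimension vectors. Most seriously, your interior-point argument (``closed under the relevant lattice operations'', ``take direct sums and kernels of maps between exceptional pieces'') is not a proof: a rigid module over a wild acyclic quiver is not multiplicity-free in any usable sense, and there is no a priori reason the set of subrepresentation dimension vectors is lattice-convex. The paper's convexity-of-$\ext$ argument bypasses all of this in two lines.
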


Finally we discuss the tools for proving these results. 
Besides the theory of tropical $F$-polynomials and their interactions with general presentations \cite{Ft}, the most important ingredient is the construction of the two pairs of functors $(t_\dtb,f_\dtb)$ and $(\tc_\dtb,\fc_\dtb)$ associated to $\delta\in\mb{Z}^{Q_0}$.
The equivalent functors were already considered in \cite{BKT}.

We write any $\delta\in \mb{Z}^{Q_0}$ as $\delta = \delta_+ - \delta_-$ where $\delta_+=\max(\delta,0)$ and $\delta_- = \max(-\delta,0)$.
For $\beta \in \mb{Z}_{\geq 0}^{Q_0}$ we write $P(\beta)$ for $\bigoplus_{i\in Q_0} \beta(i)P_i$.
We denote by $\hom(\delta,M)$ the minimal dimension of $\Hom(C,M)$ where $C$ is the cokernel of 
the projective presentation $P(\delta_-) \to P(\delta_+)$.
Let $\mc{L}(\delta,M)$ be the set of all subrepresentations $L$ of $M$ attaining $\max_{L\hookrightarrow M}{\delta(\dv L)}$.
\begin{theorem}[\cite{BKT}, Theorem \ref{T:maxminsub}] \label{T:tf} The set $\mc{L}(\delta,M)$ contains a unique minimal element $L_{\min}$ and a unique maximal element $L_{\max}$.
	Moreover, $L_1/L_0$ is $\delta$-semistable (in the sense of A. King \cite{Ki}) for any $L_0\subset L_1$ in $\mc{L}(\delta,M)$.
\end{theorem}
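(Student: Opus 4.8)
The plan is to show that $\mc{L}(\delta,M)$ is a sublattice of the subrepresentation lattice of $M$, and to read off all three assertions from that. Set $m=\max_{L\hookrightarrow M}\delta(\dv L)$; since $M$ is finite-dimensional the dimension vectors of its subrepresentations form a finite set, so this maximum is attained and $\mc{L}(\delta,M)\neq\emptyset$. The first step is to check closure under intersection and sum: for $L,L'\in\mc{L}(\delta,M)$ the canonical exact sequence $0\to L\cap L'\to L\oplus L'\to L+L'\to 0$ of subrepresentations of $M$ yields $\dv(L\cap L')+\dv(L+L')=\dv L+\dv L'$, hence $\delta(\dv(L\cap L'))+\delta(\dv(L+L'))=2m$. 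As neither summand can exceed $m$, both equal $m$, so $L\cap L'$ and $L+L'$ again lie in $\mc{L}(\delta,M)$.

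For the unique minimal and maximal elements I would use that $M$ has finite length, so its poset of subrepresentations satisfies both chain conditions. Hence $\bigcap_{L\in\mc{L}(\delta,M)}L$ and $\sum_{L\in\mc{L}(\delta,M)}L$ each coincide with a finite intersection, respectively a finite sum, of members of $\mc{L}(\delta,M)$, and therefore lie in $\mc{L}(\delta,M)$ by the previous paragraph. By construction these are the (necessarily unique) minimal and maximal elements $L_{\min}$ and $L_{\max}$.

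For the semistability statement, fix $L_0\subseteq L_1$ in $\mc{L}(\delta,M)$. Then $\delta(\dv(L_1/L_0))=\delta(\dv L_1)-\delta(\dv L_0)=m-m=0$, so $L_1/L_0$ has total $\delta$-weight zero. Every subrepresentation of $L_1/L_0$ is of the form $L/L_0$ with $L_0\subseteq L\subseteq L_1$, and since $L\hookrightarrow M$ we get $\delta(\dv L)\le m$, whence $\delta(\dv(L/L_0))=\delta(\dv L)-m\le 0$. In the convention of \cite{Ki} (weight zero overall, every subrepresentation of weight $\le 0$) this is exactly the assertion that $L_1/L_0$ is $\delta$-semistable.

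I do not anticipate a deep obstacle here, as the argument is essentially lattice-theoretic. The two points requiring a little care are the passage from a possibly infinite intersection or sum to a finite one -- which is precisely where the finite length of $M$ is used -- and the matching of the sign convention of \cite{Ki}; indeed it is this convention that dictates defining $\mc{L}(\delta,M)$ by maximizing, rather than minimizing, the values $\delta(\dv L)$.
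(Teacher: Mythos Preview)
Your argument is correct and complete. It is, however, a genuinely different route from the one the paper takes. The paper first invokes Theorem~\ref{T:HomE} to rewrite $\mc{L}(\delta,M)$ via the condition $n\delta(\dv L)=\hom(n\delta,M)$, then uses Lemma~\ref{L:sub=h} (a homological characterization in terms of the vanishing of $\hom(n\delta,M/L)$ and $\e(n\delta,L)$) to obtain closure under intersection and sum, and finally deduces $\delta$-semistability of $L_1/L_0$ from Corollary~\ref{C:ss} via $\hom(n\delta,L_1/L_0)=0$. Your proof bypasses all of this: the modular identity $\dv(L\cap L')+\dv(L+L')=\dv L+\dv L'$ plus the maximality of $m$ gives lattice closure directly, and semistability follows immediately from the definition since every subrepresentation of $L_1/L_0$ is $L/L_0$ with $L\hookrightarrow M$. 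Your approach is more elementary and entirely self-contained, requiring nothing about general presentations; the paper's approach, while heavier, has the payoff that Lemma~\ref{L:sub=h} simultaneously furnishes the homological vanishing conditions $\hom(n\delta,M/L)=\e(n\delta,L)=0$ for $L\in\mc{L}(\delta,M)$, which are needed later (e.g.\ in Lemma~\ref{L:L=tf} and Theorem~\ref{T:torsion}) to identify $L_{\min}$ and $L_{\max}$ with the torsion-theoretic functors $t_{n\delta}(M)$ and $\tc_{n\delta}(M)$.
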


\noindent We define the pair of functors $t_\dtb$ and $f_\dtb$ by $t_\dtb(M)=L_{\min}$ and $f_\dtb(M)=M/L_{\min}$,
and the pair of functors $\tc$ and $\fc$ by $\check{t}_\dtb(M)=L_{\max}$ and $\fc_\dtb(M)=M/L_{\max}$. We define the {\em $\delta$-stabilization functor} $\dtb^\perp$ by $\dtb^\perp(M)=L_{\max}/L_{\min}$.
Some properties of these functors are related to the torsion theory (see Theorem \ref{T:torsion}, Proposition \ref{P:relations}, and Corollary \ref{C:univhomo}).

This paper is organized as follows.
In Section \ref{S:Fpoly} we recall the definition of the $F$-polynomial of a representation and its tropical analogue.
In Section \ref{S:Ftrop} we recall from \cite{Ft} how the tropical $F$-polynomials interplay with general presentations.
In Section \ref{S:stab} we construct the functors $(t,f)$, $(\tc,\fc)$, and $\dtb^\perp$ in Theorem \ref{T:maxminsub}.
Then we make connection with the torsion theory in Theorem \ref{T:torsion}.
In Section \ref{S:vertices} we characterize the vertices of the Newton polytopes in Theorem \ref{T:onept} and \ref{T:acyclicv}.
In Section \ref{S:saturation} we prove the saturation property for a rigid representation in Theorem \ref{T:saturation}.
In Section \ref{S:faces} we formulate the restriction of $F$-polynomials to a particular face in full generality in Theorem \ref{T:faces}.
Then we specialize to the case of acyclic quivers in Theorem \ref{T:Newton} and Corollary \ref{C:acyclicf}.

\subsection*{Notation and Conventions}
Throughout we only deal with finite-dimensional basic algebras. So if we write an algebra $A=\mb{C}Q$, we assume implicitly that $Q$ is finite and has no oriented cycles.
For general $A=\mb{C}Q/I$, we allow $Q$ to have oriented cycles.
Although the paper is written in this generality, some of the results are only proved for path algebras.
Sometimes instead of switching between $A=\mb{C}Q/I$ and $A=\mb{C}Q$ we may just say that assume $A$ has no relations.
We denote by $Q_0$ the set of vertices of $Q$.

Unless otherwise stated, unadorned $\Hom$ and other functors are all over the algebra $A$, and the superscript $*$ is the trivial dual for vector spaces.
For direct sum of $n$ copies of $M$, we write $nM$ instead of the traditional $M^{\oplus n}$.
We write $\hom,\ext$ and $\e$ for $\dim\Hom, \dim\Ext$, and $\dim \E$.
When dealing the hereditary algebras, we write $\Ext$ instead of $\Ext^1$.

Any rational ray or normal vector will be represented by an indivisible integral vector, that is, an integral vector with no common divisors.
\begin{align*}
		& \rep(A) && \text{the category of finite-dimensional representations of an algebra $A$} &\\
		& \rep_\alpha(Q) && \text{the space of $\alpha$-dimensional representations of a quiver $Q$} &\\
		& S_i && \text{the simple representation supported on the vertex $i$} &\\
		& P_i && \text{the projective cover of $S_i$} &\\
		& I_i && \text{the injective cover of $S_i$} &\\
		& \dv M && \text{the dimension vector of $M$} & 
\end{align*}

\section{$F$-polynomial of a Representation} \label{S:Fpoly}
Most results in the paper are valid for any base field of characteristic $0$. 
We have a few statements involving the topological Euler characteristics. Instead of switching between $\mb{C}$ and other fields,
we will work with the field of complex numbers from the beginning. 
Since the Euler characteristic plays no role in the proof of the remaining results (e.g., Theorem \ref{T:intro1},\ref{T:intro2},\ref{T:intro4},\ref{T:intro5},\ref{T:tf}), one should understand that there is no essential difficulty to deal with other fields of characteristic $0$.

Let $A$ be a finite-dimensional basic algebra over $\mb{C}$.
Then $A$ can be presented as a quiver algebra modulo an ideal generated by admissible relations: $A=\mb{C}Q/I$.
Throughout we identify the Grothendieck group $K_0(\rep(A))$ with $\mb{Z}^{Q_0}$.
Let $M$ be a finite-dimensional representation of $A$. Following \cite{DWZ2},
\begin{definition} The $F$-polynomial of $M$ is by definition
	$$F_M(\b{y}) = \sum_{\gamma} \chi(\Gr_\gamma(M)) \b{y}^\gamma,$$
	where $\Gr_\gamma(M)$ is the variety parametrizing the $\gamma$-dimensional subrepresentations of $M$, and $\chi(-)$ is the topological Euler characteristic.
\end{definition}

\begin{definition} The {\em tropical $F$-polynomial} $f_M$ of a representation $M$ is the function $(\mb{Z}^{Q_0})^* \to \mb{Z}_{\geq 0}$ defined by
	$$\delta \mapsto \max_{L\hookrightarrow M}{\delta(\dv L)}.$$
The {\em dual} tropical $F$-polynomial $\fc_M$ of a representation $M$ is the function $(\mb{Z}^{Q_0})^* \to \mb{Z}_{\geq 0}$ defined by
	$$\delta \mapsto \max_{M\twoheadrightarrow N}{\delta(\dv N)}.$$
\end{definition}
\noindent Clearly $f_M$ and $\fc_M$ are related by $f_M(\delta)-\fc_M(-\delta)= \delta(\dv M)$.

\begin{definition}
	The {\em Newton polytope ${\N}(M)$ of a representation} $M$ is the convex hull of
	$$\{ \dv L \mid L\hookrightarrow M \}$$
in $\mb{R}^{Q_0}$.	The {\em dual} Newton polytope $\check{\N}(M)$ of a representation $M$ is the convex hull of
	$$\{ \dv N \mid M\twoheadrightarrow N \}$$
in $\mb{R}^{Q_0}$. This is what the authors called Harder-Narasimhan polytope in \cite{BKT}.	
\end{definition}

\begin{remark} We have a few remarks. \begin{enumerate} 
		\item We will see in Theorem \ref{T:onept} that $\chi(\Gr_\gamma(M))=1$ if $\gamma$ is a vertex of $\N(M)$.
		So the Newton polytope of $M$ is the same as the usual Newton polytope of the polynomial $F_M$.
		\item 	The tropical $F$-polynomial $f_M$ is completely determined by the Newton polytope of $M$. In particular, $f_M(\delta) = \max_{\gamma\in\N(M)} \delta(\gamma)$.
		\item	In general $\chi(\Gr_\gamma(M))$ may not be a positive number. If the $F$-polynomial $F_M$ has non-negative coefficients, then the tropical $F$-polynomial $f_M$ is the usual tropicalization of $F_M$.
	\end{enumerate}
\end{remark}

\begin{lemma} \label{L:directsum}  $F_{M\oplus N}= F_MF_N$ for any two representations $M$ and $N$.
Moreover, we have that $f_{M\oplus N} = f_{M} + f_{N}$.
\end{lemma}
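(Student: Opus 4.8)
The plan is to prove the two assertions by establishing compatible bijections between the combinatorial data of $M\oplus N$ and that of $M$ and $N$ separately. For the multiplicativity of $F$-polynomials, the key point is a decomposition of the representation Grassmannian. First I would recall that a subrepresentation $L\hookrightarrow M\oplus N$ need not split as $L_M\oplus L_N$; however, one can stratify $\Gr_\gamma(M\oplus N)$ by the dimension vectors $(\beta,\gamma-\beta)$ where $\beta=\dv(L\cap M)$ (here $M$ and $N$ are regarded as subrepresentations of $M\oplus N$ via the canonical inclusions, and $L\cap M$ makes sense). On each stratum one gets an affine-bundle-type map: the fiber over a pair $(L_M,L_N)\in\Gr_\beta(M)\times\Gr_{\gamma-\beta}(N)$ consists of homomorphisms realizing $L$ as a "graph-like" extension, i.e.\ is an affine space $\Hom(L_N', L_M'')$ for appropriate subquotients. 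Since the Euler characteristic is additive over such stratifications and multiplicative over affine-space bundles, the contributions reassemble into $\sum_\beta \chi(\Gr_\beta(M))\chi(\Gr_{\gamma-\beta}(N))\b{y}^\gamma$, which is exactly the coefficient of $\b{y}^\gamma$ in $F_M F_N$.

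For the tropical statement $f_{M\oplus N}=f_M+f_N$, I would argue directly from the definition $f_X(\delta)=\max_{L\hookrightarrow X}\delta(\dv L)$. The inequality $f_{M\oplus N}(\delta)\geq f_M(\delta)+f_N(\delta)$ is immediate: if $L_M\hookrightarrow M$ and $L_N\hookrightarrow N$ attain the respective maxima, then $L_M\oplus L_N\hookrightarrow M\oplus N$ has $\dv=\dv L_M+\dv L_N$, so $\delta$ evaluated there is $f_M(\delta)+f_N(\delta)$. For the reverse inequality, take any $L\hookrightarrow M\oplus N$; projecting onto the two factors gives a subrepresentation $L'\subseteq M$ (the image of $L$ under $M\oplus N\twoheadrightarrow M$) and $L''=L\cap N\subseteq N$, fitting in a short exact sequence $0\to L''\to L\to L'\to 0$, whence $\dv L=\dv L'+\dv L''$ and $\delta(\dv L)=\delta(\dv L')+\delta(\dv L'')\leq f_M(\delta)+f_N(\delta)$. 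This gives the equality. Alternatively, and perhaps more cleanly, one can simply invoke the already-proved multiplicativity together with remark (2) above: $f_{M\oplus N}$ is determined by $\N(M\oplus N)$, which is the Newton polytope of $F_MF_N$, i.e.\ the Minkowski sum $\N(M)+\N(N)$, and the support function of a Minkowski sum is the sum of the support functions.

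The main obstacle I anticipate is the Grassmannian stratification argument: making precise the claim that each stratum of $\Gr_\gamma(M\oplus N)$ indexed by $\beta$ fibers over $\Gr_\beta(M)\times\Gr_{\gamma-\beta}(N)$ with affine-space fibers requires a careful bookkeeping of which homomorphism spaces parametrize the "twist" by which a subrepresentation deviates from a split one, and ensuring the fibration is locally trivial in the relevant (e.g.\ Zariski or at least stratified) sense so that multiplicativity of $\chi$ applies. One should either spell this out or cite the standard fact (it appears in the work on Caldero–Chapoton maps and cluster characters, e.g.\ in \cite{CC}) that $\chi(\Gr_\gamma(M\oplus N))=\sum_{\beta}\chi(\Gr_\beta(M))\chi(\Gr_{\gamma-\beta}(N))$. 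Given that this multiplicativity is well known in the cluster-algebra literature, I would keep the proof short: cite it for the first claim, and derive the tropical identity either directly as above or as a formal consequence via Newton polytopes.
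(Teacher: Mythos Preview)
Your proposal is correct. For the multiplicativity $F_{M\oplus N}=F_MF_N$, you and the paper do the same thing: the paper simply cites \cite[Proposition 3.2]{DWZ2}, and you likewise recommend citing the known result.

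For the tropical identity $f_{M\oplus N}=f_M+f_N$, the approaches genuinely differ. The paper reuses the torus-action idea from the DWZ2 proof: the $\mb{C}^*$-action $t\cdot(m,n)=(tm,n)$ on the complete variety $\Gr_\gamma(M\oplus N)$ flows every subrepresentation $S$ to a $\mb{C}^*$-fixed, hence split, subrepresentation $L\oplus L'$ with the same dimension vector, so the set of sub-dimension vectors of $M\oplus N$ coincides with the set of sums $\dv L+\dv L'$. Your direct argument via the short exact sequence $0\to L\cap N\to L\to \pi_M(L)\to 0$ achieves the same conclusion more elementarily, with no geometry. Both are clean; the paper's route is a free byproduct of the cited proof, while yours is self-contained and works over any field. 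One caution about your alternative route through Newton polytopes: concluding $\N(M\oplus N)=\N(M)+\N(N)$ from $F_{M\oplus N}=F_MF_N$ requires knowing that $\N(X)$ agrees with the Newton polytope of $F_X$, which in this paper is established only later (Theorem~\ref{T:onept}); so at this point in the exposition stick with your direct argument.
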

\begin{proof} The first statement was proved in \cite[Proposition 3.2]{DWZ2}.
As in its proof, we consider the torus action on $X=\Gr_{\gamma}(M\oplus N)$ induced by the automorphism of $M\oplus N$
$$t\cdot (m,n) = (tm,n)\quad (m\in M,\ n\in N).$$
Since $X$ is complete, we have that $S_0:=\lim_{t\to 0} tS \in X$ for any $S\in X$. 
In fact, it is easy to check that $S_0 \in X^{\mb{C}^*}$.
But any $S_0$ is $\mb{C}$-fixed if and only if $S_0$ splits into $S_0=L\oplus L'$ for some $L\subset M$ and $L'\subset N$.
	Conversely, for any $L\subset M$ and $L'\subset N$ we have that $L\oplus L' \subset M\oplus N$.
\end{proof}

\section{Tropical $F$-polynomials and General Presentations} \label{S:Ftrop}
\subsection{Tropical $F$-polynomials} \label{ss:Ftrop}
We keep assuming that $A=\mb{C}Q/I$.
We denote by $P_i$ the indecomposable projective representation of $A$ corresponding the vertex $i$ of $Q$.
For $\beta \in \mb{Z}_{\geq 0}^{Q_0}$ we write $P(\beta)$ for $\bigoplus_{i\in Q_0} \beta(i)P_i$.
Following \cite{DF} we call a homomorphism between two projective representations, a {\em projective presentation} (or presentation in short).

\begin{definition}\footnote{The $\delta$-vector is the same one defined in \cite{DF}, but is the negative of the $\g$-vector defined in \cite{DWZ2}. }
	 The {\em $\delta$-vector} (or {\em reduced weight vector}) of a presentation 
	 $$d: P(\beta_-)\to P(\beta_+)$$
	is the difference $\beta_+-\beta_- \in \mb{Z}^{Q_0}$.
	When working with injective presentations 
	$$\dc: I(\check{\beta}_+)\to I(\check{\beta}_-),$$
	 we call the vector $\check{\beta}_+ - \check{\beta}_-$ the {\em $\check{\delta}$-vector} of $\dc$.
\end{definition}
\noindent Let $\nu$ be the Nakayama functor $\Hom(-,A)^*$.
There is a map still denoted by $\nu$ sending a projective presentation to an injective one
$$P_-\to P_+\ \mapsto\ \nu(P_-) \to \nu(P_+).$$
Note that if there is no direct summand of form $P_i\to 0$, then $\Ker(\nu d) = \tau\Coker(d)$ where $\tau$ is the classical Auslander-Reiten translation \cite{ASS}.

\begin{definition}[{\cite{DWZ2,DF}}] Given any projective presentation $d: P_-\to P_+$ and any $N\in \rep(A)$, we define $\Hom(d,N)$ and $\E(d,N)$ to be the kernel and cokernel of the induced map:
	\begin{equation} \label{eq:longexact} 0\to \Hom(d,N)\to \Hom(P_+,N) \xrightarrow{} \Hom(P_-,N) \to \E(d, N)\to 0.
	\end{equation}
Similarly for an injective presentation $\dc: I_+\to I_-$, we define $\Hom(M,\dc)$ and $\Ec(M,\dc)$ to be the kernel and cokernel of the induced map $\Hom(M,I_+) \xrightarrow{} \Hom(M,I_-)$.
It is clear that 
$$\Hom(d,N) = \Hom(\Coker(d),N)\ \text{ and }\ \Hom(M,\dc) = \Hom(M,\Ker(\dc)).$$

For any representation $M$, we denote by $d_M$ (resp. $\dc_M$) its minimal projective (resp. injective) presentation, and by $\delta_M$ (resp. $\check{\delta}_M$) the weight vector of $d_M$ (resp. $\dc_M$).
Given any two representations $M$ and $N$, we define 
$$\E(M,N):= \E(d_M,N)\ \text{ and }\ \Ec(M,N):= \Ec(M,\dc_N).$$

A presentation $d$ is called {\em rigid} if $\E(d,\Coker(d))=0$. Such a presentation has a dense $\Aut(P_-)\times \Aut(P_+)$-orbit in $\Hom(P_-,P_+)$ \cite{DF}.
A representation $M$ is called {\em $\E$-rigid} if $\E(M,M)=0$.
This is equivalent to say that the minimal presentation of $M$ is rigid.
According to Lemma \ref{L:E}.(3) below, this is what the authors called $\tau$-rigid in \cite{AIR}.
Similarly, a representation $M$ is called $\Ec$-rigid if $\Ec(M,M)=0$.
It is called $\E$-birigid if it is both $\E$-rigid and $\Ec$-rigid.
For Jacobian algebras, $\E$-rigid is equivalent to $\Ec$-rigid \cite{DWZ2}.
These representations are an important class of representations in connection with the cluster algebra theory.
\end{definition}

\begin{lemma}[{\cite[Section 5, Lemmas 3.4 and 7.5]{DF}}] \label{L:E} We have the following properties \begin{enumerate}
\item	Any exact sequence $0\to L \to M \to N \to 0$ in $\rep(A)$ gives the long exact sequence:
	$$0\to \Hom(d,L)\to \Hom(d,M) \to \Hom(d,N) \to \E(d, L) \to \E(d, M) \to \E(d, N)\to 0.$$
\item	$\E(M,N) \supseteq \Ext^1(M,N)$ for any $M$ and $N$.
\item	$\E(d,N) \cong \Hom(N, \nu d)^*$ for any $d$ and $N$.
\end{enumerate}	
\end{lemma}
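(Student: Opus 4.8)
The final statement to prove is Lemma~\ref{L:E}, a collection of three properties of the bifunctor $\E(d,-)$ (and its specialization $\E(M,-)$) attributed to Derksen--Fei; let me sketch how I would establish each part.

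\medskip

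\textbf{Setup and part (3).} The plan is to start from the defining four-term exact sequence \eqref{eq:longexact} associated to a presentation $d:P_-\to P_+$, and to exploit the identification $\Hom(d,N)=\Hom(\Coker(d),N)$ together with the Nakayama adjunction. For part (3), the key input is the standard isomorphism $\Hom(P,N)\cong\Hom(N,\nu P)^*$, natural in both arguments, valid for $P$ projective (this is the defining property of the Nakayama functor $\nu=\Hom(-,A)^*$ on projectives). Applying this to the map $\Hom(P_+,N)\to\Hom(P_-,N)$ induced by $d$, I would dualize the whole sequence \eqref{eq:longexact}: the middle map becomes, up to natural isomorphism, the map $\Hom(N,\nu P_-)^*\to\Hom(N,\nu P_+)^*$ induced by $\nu d$. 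Since $(-)^*$ is exact on finite-dimensional vector spaces, the cokernel $\E(d,N)$ of the original map is identified with the dual of the kernel of $\Hom(N,\nu d):\Hom(N,\nu P_+)\to\Hom(N,\nu P_-)$ — wait, one must be careful with the direction: dualizing turns the cokernel of $\Hom(P_+,N)\to\Hom(P_-,N)$ into the kernel of $\Hom(N,\nu P_-)\to\Hom(N,\nu P_+)$, which is precisely $\Hom(N,\nu d)$ as a map $\nu P_-\to\nu P_+$ read off from $\nu d$. Hence $\E(d,N)\cong\Hom(N,\nu d)^*$. I would then note $\E(M,N)=\E(d_M,N)$ by definition, recovering the module-level statement.

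\medskip

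\textbf{Part (1).} Given a short exact sequence $0\to L\to M\to N\to 0$, I would apply the covariant functors $\Hom(P_+,-)$ and $\Hom(P_-,-)$; since $P_\pm$ are projective these are exact, so I get two short exact sequences fitting into a commutative ladder whose vertical maps are induced by $d$. The snake lemma applied to this ladder, with $\Hom(d,-)$ being the kernel of the vertical maps and $\E(d,-)$ the cokernel, yields exactly the six-term exact sequence $0\to\Hom(d,L)\to\Hom(d,M)\to\Hom(d,N)\to\E(d,L)\to\E(d,M)\to\E(d,N)\to 0$. This is a routine diagram chase once the ladder is set up; the only point to check is that the connecting map is the natural one, which follows from naturality of \eqref{eq:longexact} in $N$.

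\medskip

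\textbf{Part (2).} For the inclusion $\E(M,N)\supseteq\Ext^1(M,N)$, I would use the minimal projective presentation $d_M:P_-\xrightarrow{d_M}P_+\to M\to 0$. Breaking it as $0\to\Omega M\to P_+\to M\to 0$ and $P_-\twoheadrightarrow\Omega M$ (where $\Omega M$ is the first syzygy), one computes $\Ext^1(M,N)=\Coker\big(\Hom(P_+,N)\to\Hom(\Omega M,N)\big)$ from the long exact sequence for $\Hom(-,N)$, using $\Ext^1(P_+,N)=0$. Since $\Hom(P_-,N)\to\Hom(\Omega M,N)$ is surjective only when $P_-\to\Omega M$ has suitable properties; in general $\Hom(\Omega M,N)\hookrightarrow\Hom(P_-,N)$ need not hold, so instead I would compare cokernels: $\E(M,N)=\Coker\big(\Hom(P_+,N)\to\Hom(P_-,N)\big)$ and there is a natural surjection $\Hom(P_-,N)\twoheadrightarrow$ onto the relevant subquotient — more cleanly, factor the map $\Hom(P_+,N)\to\Hom(P_-,N)$ through $\Hom(\Omega M,N)$, and observe $\Hom(\Omega M,N)\to\Hom(P_-,N)$ is injective (as $P_-\to\Omega M$ is epi). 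Then $\Ext^1(M,N)=\Coker\big(\Hom(P_+,N)\to\Hom(\Omega M,N)\big)$ injects into $\Coker\big(\Hom(P_+,N)\to\Hom(P_-,N)\big)=\E(M,N)$, giving the claimed containment. The main subtlety here — and the step I expect to be the real obstacle — is keeping the minimality and syzygy bookkeeping straight so that the factorization through $\Omega M$ and the injectivity of $\Hom(\Omega M,N)\to\Hom(P_-,N)$ are correctly justified; everything else reduces to exactness of $\Hom(P_\pm,-)$ and the snake lemma. Since all three parts are quoted from \cite{DF}, I would in the write-up either give these short arguments or simply cite the precise lemmas there.
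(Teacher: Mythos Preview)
Your proposal is correct, and all three arguments are the standard ones. Note, however, that the paper does not actually give a proof of this lemma: it simply cites \cite[Section~5, Lemmas~3.4 and~7.5]{DF} and moves on. So there is nothing to compare against on the paper's side beyond the citation.

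A couple of small comments on your write-up. In part~(2), your worry about minimality is misplaced: the argument works for \emph{any} projective presentation $P_-\xrightarrow{d}P_+\to M\to 0$, since $\Omega M:=\operatorname{Im}(d)=\Ker(P_+\to M)$ always receives a surjection from $P_-$, hence $\Hom(\Omega M,N)\hookrightarrow\Hom(P_-,N)$ regardless of minimality. The specialization to $d_M$ is only because that is how $\E(M,N)$ is \emph{defined}. In part~(3), your hesitation about directions is unnecessary once you write it cleanly: naturality of $\Hom(P,N)\cong\Hom(N,\nu P)^*$ in $P$ turns $d^*:\Hom(P_+,N)\to\Hom(P_-,N)$ into the dual of $\Hom(N,\nu d):\Hom(N,\nu P_-)\to\Hom(N,\nu P_+)$, and dualizing swaps kernel and cokernel, giving $\E(d,N)\cong\Hom(N,\nu d)^*$ directly.
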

\noindent Readers can easily formulate the analogous statements for $\Ec$.

Any $\delta\in \mb{Z}^{Q_0}$ can be written as $\delta = \delta_+ - \delta_-$ where $\delta_+=\max(\delta,0)$ and $\delta_- = \max(-\delta,0)$. Here the maximum is taken coordinate-wise.
We put $\PHom(\delta):=\Hom(P(\delta_-),P(\delta_+))$.
We say that a {\em general} presentation in $\PHom(\delta)$ has property $A$ if there is some open (and thus dense) subset $U$ of $\PHom(\delta)$ such that all presentations in $U$ have property $A$.
For example, a general presentation $d$ in $\PHom(\delta)$ has the following properties:
\begin{enumerate}
   \item $\Hom(d,N)$ has constant dimension for a fixed $N\in \rep(A)$.
   \item $\Gr_\gamma(\Coker(d))$ has constant topological Euler characteristic.
\end{enumerate}
Note that (1) implies that $\E(d,N)$ has constant dimension on $U$ as well. 
If we apply (1) to $N=A^*$, then $\Coker(d)$ has a constant dimension vector.
It follows from (2) that $U$ can be chosen such that $\Coker(d)$ has a constant $F$-polynomial.
We denote by $\Coker(\delta)$ the cokernel of a general presentation in $\PHom(\delta)$.
Note that from the discussion above, any rigid presentation is general.
\begin{definition} We denote by $\hom(\delta,N)$ and $\e(\delta,N)$ the value of 
	$\hom(d,N)$ and $\e(d,N)$ for a general presentation $d\in \PHom(\delta)$.
$\hom(M,\dtc)$ and $\ec(M,\dtc)$ are defined analogously.
\end{definition}
\noindent If $\delta = \delta_1 + \delta_2$, then it is easy to show by the upper semi-continuity \cite[Lemma 2.11]{Ft} that $\hom(\delta,M) \leq \hom(\delta_1,M) + \hom(\delta_2,M)$ for any $M$.

When paired with a dimension vector or evaluated by some $f_M$, 
a weight $\delta$ is viewed as an element in $(\mb{Z}^{Q_0})^*$ via the usual dot product.
It follows from \eqref{eq:longexact} that for any presentation $d$ of weight $\delta$,
\begin{align*} \delta(\dv M) &= \hom(d,M) - \e(d,M);\\
\check{\delta} (\dv M) &= \hom(M,\dc) - \ec(M,\dc).
\end{align*}

The following is one of the main results in \cite{Ft}.
\begin{theorem}[{\cite[Theorem 2.6]{Ft}}] \label{T:HomE} Let $M$ be any representation of $A$.
	For any $\delta,\dtc\in \mb{Z}^{Q_0}$, there are some $n,\check{n}\in\mb{N}$ respectively such that
	\begin{align*}
	{f}_M(n\delta) &= \hom(n\delta,M), & \fc_M(-n\delta) &= {\e}(n\delta,M);\\
 \fc_M(\check{n}\check{\delta}) &= \hom(M,\check{n}\check{\delta}), & {f}_M(-\check{n}\check{\delta}) &= \ec(M,\check{n}\check{\delta}).\end{align*}
Moreover, the equality still holds if $n$ is replaced by $kn$ for any $k\in\mb{N}$.
\end{theorem}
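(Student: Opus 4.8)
The plan is to reduce the four displayed identities to one, prove the easy half in full generality, and then use the stabilization functors of Theorem~\ref{T:maxminsub} to match the tropical value after replacing $\delta$ by a multiple. First, the four identities are equivalent. From the identity $\delta(\dv M)=\hom(d,M)-\e(d,M)$ (valid for any $d$ of weight $\delta$) one gets $n\delta(\dv M)=\hom(n\delta,M)-\e(n\delta,M)$, and together with $f_M(n\delta)-\fc_M(-n\delta)=n\delta(\dv M)$ this shows that $f_M(n\delta)=\hom(n\delta,M)$ if and only if $\fc_M(-n\delta)=\e(n\delta,M)$. The two identities on the second line are the $\nu$-images of those on the first: by Lemma~\ref{L:E}.(3), $\E(n\delta,M)\cong\Hom(M,\nu d)^{*}$, where $\nu d$ is an injective presentation of weight $-n\delta$, so $\e(n\delta,M)=\hom(M,-n\delta)$. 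Thus it suffices to produce, for each $\delta$, an $n$ with $f_M(n\delta)=\hom(n\delta,M)$, and to check the equality survives $n\mapsto kn$.

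For the easy inequality: if $d$ has weight $\delta$ and $L\hookrightarrow M$, the long exact sequence of Lemma~\ref{L:E}.(1) begins $0\to\Hom(d,L)\to\Hom(d,M)$, so $\hom(d,L)\le\hom(d,M)$, whence $\delta(\dv L)=\hom(d,L)-\e(d,L)\le\hom(d,M)$; maximizing over $L$ and letting $d$ be general gives $f_M(\delta)\le\hom(\delta,M)$, hence $f_M(n\delta)=nf_M(\delta)\le\hom(n\delta,M)$ for all $n$ by positive homogeneity of $f_M$. Next, upper semicontinuity \cite[Lemma 2.11]{Ft} yields the subadditivity $\hom((m+n)\delta,M)\le\hom(m\delta,M)+\hom(n\delta,M)$, hence $\hom(kn\delta,M)\le k\,\hom(n\delta,M)$; so once a single $n_0$ satisfies $\hom(n_0\delta,M)\le n_0f_M(\delta)$, the easy inequality forces equality at $n_0$ and then at every $kn_0$. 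The theorem therefore reduces to exhibiting \emph{one} presentation $d$ of weight $n_0\delta$, for a suitable $n_0$, with $\dim\Hom(\Coker d,M)\le n_0f_M(\delta)$ -- generality of $d$ is then automatic, since $\hom(n_0\delta,M)$ is the minimum of $\dim\Hom(\Coker d,M)$.

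To get that bound, write $L_{\min}=t_\dtb(M)$ and $N=f_\dtb(M)=M/L_{\min}$, so that $\delta(\dv L_{\min})=f_M(\delta)$ by Theorem~\ref{T:maxminsub}. For a presentation $d$ of weight $n_0\delta$, Lemma~\ref{L:E}.(1) gives
\begin{equation*}
\hom(d,M)\le\hom(d,L_{\min})+\hom(d,N)=n_0f_M(\delta)+\e(d,L_{\min})+\dim\Hom(\Coker d,N),
\end{equation*}
so it is enough to find $n_0$ and a general $d$ of weight $n_0\delta$ with $\Hom(\Coker d,N)=0$ and $\E(d,L_{\min})=0$. The first vanishing says $\Coker d$ lies in the torsion class cut out by $\delta$, of which $N=f_\dtb(M)$ is the torsion-free quotient (Theorem~\ref{T:torsion}); the second, rewritten via Lemma~\ref{L:E}.(3) as $\Hom(L_{\min},\tau\Coker d)=0$, says $\tau\Coker d$ is torsion-free against the $\delta$-torsion module $L_{\min}$. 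Both become available once $\Coker d$ is forced into the $\delta$-semistable subcategory, and this is precisely where scaling is essential: the canonical decomposition of the presentation weight $n\delta$ (in the sense of \cite{DF}) stabilizes as $n$ runs over a suitable arithmetic progression, its pieces aligning along that progression with the semistable subquotient $L_{\max}/L_{\min}=\dtb^\perp(M)$ and with the King semistability of Theorem~\ref{T:maxminsub}. Taking $n_0$ in that progression yields the two vanishings and hence $\hom(\Coker d,M)\le n_0f_M(\delta)$.

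The main obstacle is this last step: showing that a general presentation of weight $n\delta$, for $n$ large along the right progression, has cokernel landing in the $\delta$-torsion class with $\delta$-semistable ``core'', so that after applying $\tau$ where needed it becomes orthogonal to the torsion-free and the torsion parts of $M$ respectively -- equivalently, that generic presentations eventually detect exactly the torsion pair and semistable subquotient that $t_\dtb$, $f_\dtb$, $\dtb^\perp$ attach to $M$. Pinning down the canonical decomposition of $n\delta$ and its compatibility with the filtration of Theorem~\ref{T:maxminsub} is the technical heart; everything else is formal bookkeeping.
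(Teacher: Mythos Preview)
Your proposal has two structural problems.

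\textbf{Circularity.} You invoke Theorem~\ref{T:maxminsub} and Theorem~\ref{T:torsion} to produce $L_{\min}$, $N=f_\dtb(M)$, and the torsion-pair interpretation. But in this paper both of those results are \emph{derived from} Theorem~\ref{T:HomE}: the proof of Theorem~\ref{T:maxminsub} begins by passing to the equivalent description \eqref{eq:f=hom} of $\mc{L}(\delta,M)$, which is justified precisely by choosing an $n$ as in Theorem~\ref{T:HomE}; Corollary~\ref{C:ss} is stated as a special case of Theorem~\ref{T:HomE}; and the proof of Theorem~\ref{T:torsion} explicitly uses ``the moreover part of Theorem~\ref{T:HomE}''. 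One can rescue Theorem~\ref{T:maxminsub} by citing \cite{BKT} directly, but that version only gives you the minimal and maximal elements of $\mc{L}(\delta,M)=\{L\subseteq M:\delta(\dv L)=f_M(\delta)\}$ with no information about $\hom(n\delta,-)$ or $\e(n\delta,-)$. The identifications $N\in\mc{F}(\dtb)$ and $L_{\min}\in\check{\mc{T}}(\dtb)$ that you need---i.e., $\hom(n\delta,N)=0$ and $\e(n\delta,L_{\min})=0$ for some $n$---are exactly the content of Theorem~\ref{T:HomE} applied to $N$ and (dually) to $L_{\min}$. So the reduction is circular.

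\textbf{The final paragraph is not a proof.} Even if one grants you $L_{\min}$ and $N$ from \cite{BKT}, the displayed inequality
\[
\hom(d,M)\le n_0f_M(\delta)+\e(d,L_{\min})+\hom(\Coker d,N)
\]
only becomes useful once you \emph{prove} the two vanishings on the right, and your last paragraph does not do that. The assertion that ``$\Coker d$ is forced into the $\delta$-semistable subcategory'' for $n$ along a suitable progression is false in general: for instance, on the $2$-Kronecker quiver with $\delta=(1,0)$ one has $\Coker(n\delta)=nP_1$ for every $n$, and $\delta(\dv nP_1)=n\neq 0$, so it is never $\delta$-semistable. The sentence about the canonical decomposition of $n\delta$ ``aligning'' with $\dtb^\perp(M)$ is not an argument: the canonical decomposition of $n\delta$ depends only on $\delta$, while $\dtb^\perp(M)$ depends on $M$, and you give no mechanism linking them. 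You yourself flag this as ``the technical heart''---but what remains is not bookkeeping, it is the entire theorem. (Note that the paper does not prove Theorem~\ref{T:HomE} either; it is quoted from \cite{Ft}, and the actual proof there does not proceed via the torsion pairs of Section~\ref{S:stab}.)
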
 

\begin{remark} \label{r:n=1} In general, we cannot guarantee $n=1$ unless in the following special situations: \begin{enumerate}
		\item $\delta$ satisfies $\e(\delta,\delta):=\min\{\e(d_1,d_2)\mid d_1,d_2\in \PHom(\delta) \}=0$ (see \cite{Ft}).
		\item $A$ is the path algebra of an acyclic quiver and $M$ is a general representation (\cite{S2}).
	\end{enumerate}
For more general statements in this direction, see \cite[Theorem 2.22 and Corollary 2.23]{Ft}.
\end{remark}

Consider the sets 
\begin{align*}
{\sf \Delta}_0(M)&=\{\delta\in \mb{Z}^{Q_0} \mid \hom(n\delta,M)=0\ \text{ for some $n\in\mb{N}$} \}, \\
{\sf \Delta}_1(M)&=\{\delta\in \mb{Z}^{Q_0} \mid {\e}(n\delta,M)=0\ \text{ for some $n\in\mb{N}$} \}.
\end{align*}
Let ${\sf V}(M)$ and $\check{\sf V}(M)$ be the set of vertices in $\N(M)$ and $\check{\N}(M)$.
\begin{corollary}[{\cite[Corollary 4.11]{Ft}}] \label{C:HE0} ${\sf \Delta}_0(M)$ $($resp. ${\sf \Delta}_1(M)$$)$ are precisely the lattice points in the polyhedral cone defined by $\delta(v) \leq 0$ for all $v \in {\V}(M)$ $($resp. $\delta(v) \geq 0$ for all $v \in {\sf \check{V}}(M)$$)$.
\end{corollary}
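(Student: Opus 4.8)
The plan is to reduce both assertions to the fact that the tropical $F$-polynomial $f_M$ (resp.\ its dual $\fc_M$) is the support function of the lattice polytope $\N(M)$ (resp.\ $\check{\N}(M)$), and to pass between $\hom(n\delta,M)$ and $f_M(n\delta)$ using Theorem~\ref{T:HomE} together with the subadditivity $\hom(\delta_1+\delta_2,M)\le\hom(\delta_1,M)+\hom(\delta_2,M)$ recalled above. I will carry this out for ${\sf \Delta}_0(M)$; the ${\sf \Delta}_1(M)$ statement is dual.

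First I would record three elementary facts. Since $\{\dv L\mid L\hookrightarrow M\}$ is a set of lattice points bounded coordinate-wise by $\dv M$, the polytope $\N(M)$ is a lattice polytope and $f_M(\delta)=\max_{\gamma\in\N(M)}\delta(\gamma)=\max_{v\in{\sf V}(M)}\delta(v)$; because $0\hookrightarrow M$ one has $f_M\ge 0$; and $f_M$ is positively homogeneous, $f_M(n\delta)=n\,f_M(\delta)$ for $n\ge 1$. Together these show that the polyhedral cone appearing in the statement is precisely $\{\delta\in\mb{Z}^{Q_0}\mid f_M(\delta)\le 0\}$, so it will suffice to prove ${\sf \Delta}_0(M)=\{\delta\in\mb{Z}^{Q_0}\mid f_M(\delta)\le 0\}$.

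Next I would check the two inclusions. For "$\subseteq$": if $\hom(n\delta,M)=0$ for some $n\ge 1$, then subadditivity gives $\hom(kn\delta,M)\le k\,\hom(n\delta,M)=0$ for all $k\ge 1$; choosing by Theorem~\ref{T:HomE} an $n_0$ with $f_M(kn_0\delta)=\hom(kn_0\delta,M)$ for every $k$ and taking $k=n$, we get $nn_0\,f_M(\delta)=f_M(nn_0\delta)=\hom(nn_0\delta,M)=0$, hence $f_M(\delta)=0\le 0$. For "$\supseteq$": if $f_M(\delta)\le 0$ then $f_M(\delta)=0$ since $f_M\ge 0$, and Theorem~\ref{T:HomE} again gives $\hom(n_0\delta,M)=f_M(n_0\delta)=0$, so $\delta\in{\sf \Delta}_0(M)$. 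Finally, the dual statement I would deduce by the same argument with $\e(-,M)$, $\fc_M$, $\check{\N}(M)$, $\check{\sf V}(M)$ in place of $\hom(-,M)$, $f_M$, $\N(M)$, ${\sf V}(M)$, using $\fc_M(-n\delta)=\e(n\delta,M)$ from Theorem~\ref{T:HomE} and deriving $\e(kn\delta,M)\le k\,\e(n\delta,M)$ from the identity $\e(\delta',M)=\hom(\delta',M)-\delta'(\dv M)$ and subadditivity of $\hom(-,M)$; the sign reversal converts "$\fc_M(-\delta)\le 0$" into "$\delta(v)\ge 0$ for all $v\in\check{\sf V}(M)$".

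The main obstacle — really the only nontrivial point — is that Theorem~\ref{T:HomE} supplies the identity $f_M(n\delta)=\hom(n\delta,M)$ only along an arithmetic progression in $n$, not at $n=1$. This is exactly why the homogeneity of $f_M$ and the subadditivity of $\hom(-,M)$ are both needed: the former lets one descend a vanishing from a large multiple of $\delta$ back to $\delta$ itself, and the latter lets one ascend a vanishing from the multiple witnessing membership in ${\sf \Delta}_0(M)$ up to a common multiple where Theorem~\ref{T:HomE} applies. Everything else is routine bookkeeping about support functions of polytopes.
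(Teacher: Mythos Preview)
Your proof is correct. The paper does not actually give its own proof of this corollary; it is simply cited from \cite{Ft} as an immediate consequence of Theorem~\ref{T:HomE}, which is exactly what you derive it from. Your reduction to the identity ${\sf \Delta}_0(M)=\{\delta\mid f_M(\delta)=0\}$ via the support function, together with Theorem~\ref{T:HomE} and subadditivity, is the intended argument.

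One minor simplification: in the ``$\subseteq$'' direction you can avoid the common-multiple bookkeeping. The paper later records (just before \eqref{eq:f=hom}) that $f_M(\delta)\le\hom(\delta,M)$ always holds, and this is elementary: for any presentation $d$ of weight $\delta$ and any $L\hookrightarrow M$ one has $\delta(\dv L)=\hom(d,L)-\e(d,L)\le\hom(d,L)\le\hom(d,M)$ by Lemma~\ref{L:E}.(1). Hence $\hom(n\delta,M)=0$ forces $f_M(n\delta)\le 0$, so $f_M(\delta)=0$ by homogeneity, with no need to pass to a multiple where Theorem~\ref{T:HomE} applies. Your route through subadditivity is perfectly valid, just slightly longer.
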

\begin{remark} \label{r:HE0}
For $A=\mb{C}Q$, a general $\alpha$-dimensional representation is the cokernel of a general presentation of weight $\delta_\alpha = \innerprod{\alpha, -}$, where $\innerprod{-,-}$ is the Euler form of the quiver.
So all dimension vectors $\alpha$ such that $\hom(\alpha, \beta)=0$ $($resp. $\ext(\alpha, \beta)=0$$)$ are precisely the lattice points in the polyhedral cone defined by $\alpha\in \mb{Z}_{\geq 0}^{Q_0}$ and $\innerprod{\alpha, \gamma} \leq 0$ for all $\gamma \in {\V}(M)$ (resp. $\innerprod{\alpha, \gamma} \geq 0$ for all $\gamma \in {\sf \check{V}}(M)$).	
Here, $M$ is a general representation of dimension $\beta$, and \begin{align*}
\hom(\gamma,\beta) &= \min\{\hom(M,N) | M \in \rep_\gamma(Q), N \in \rep_\beta(Q)\},\\
\ext(\gamma,\beta) &= \min\{\ext(M,N) | M \in \rep_\gamma(Q), N \in \rep_\beta(Q)\}.
\end{align*}
\end{remark}

\subsection{Newton Polytopes of $F$-polynomials}
Let $V$ be a $\mathbb R$-vector space.
To a non-empty compact convex subset ${\sf P}$ of $V$, we associate its
support function $\psi_{\sf P}:V^*\to\mathbb R$, which maps a linear function
$f\in V^*$ to the maximal value $f$ takes on ${\sf P}$.
Then $\psi_{\sf P}$ is a sublinear function on $V^*$. 
As explained in \cite[Section 4.2]{BK}, one can recover ${\sf P}$ from the datum of $\psi_{\sf P}$ by the Hahn-Banach theorem
$${\sf P}=\{v\in V\mid \alpha(v)\leq \psi_{\sf P}(\alpha),\ \forall\alpha\in V^*\},$$
and the map ${\sf P}\mapsto\psi_{\sf P}$ is a bijection from the set of all
non-empty compact convex subsets of $V$ onto the set of all
sublinear functions on $V^*$. 
In our setting of ${\sf P}=\N(M)$, the support function is given by $\delta \mapsto f_M(\delta)$.
It follows easily from Theorem \ref{T:HomE} that

\begin{corollary}[{\cite[Theorem 4.1]{Ft}}] \label{C:allfaces} The Newton polytope $\N(M)$ is defined by 
	$$\{\gamma\in \mb{R}^{Q_0}\mid \delta(\gamma)\leq \hom(\delta,M),\ \forall \delta \in \mb{Z}^{Q_0} \}.$$
	The dual Newton polytope $\check\N(M)$ is defined by 
	$$\{\gamma\in \mb{R}^{Q_0}\mid \check{\delta}(\gamma)\leq \hom(M,\check{\delta}),\ \forall \check{\delta} \in \mb{Z}^{Q_0} \}.$$
\end{corollary}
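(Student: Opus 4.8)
The plan is to derive Corollary \ref{C:allfaces} as a direct consequence of the general theory of support functions together with Theorem \ref{T:HomE}. Recall that the support function $\psi_{\N(M)}$ of the compact convex polytope $\N(M)$ is recovered by the Hahn--Banach description displayed just above the statement: $\N(M) = \{\gamma\in\mb{R}^{Q_0} \mid \delta(\gamma)\leq \psi_{\N(M)}(\delta),\ \forall\delta\in(\mb{R}^{Q_0})^*\}$. As noted in the text, in our setting the support function is exactly the tropical $F$-polynomial, i.e.\ $\psi_{\N(M)}(\delta) = f_M(\delta)$ for $\delta\in(\mb{R}^{Q_0})^*$. So the content to prove is that one may replace $f_M(\delta)$ by $\hom(\delta,M)$ in this inequality description, and that it suffices to range over the integral lattice $\delta\in\mb{Z}^{Q_0}$ rather than all of $(\mb{R}^{Q_0})^*$.

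First I would reduce from real to rational (hence, after clearing denominators, integral) linear functionals: since $f_M$ is positively homogeneous of degree $1$ and piecewise linear with rational slopes (being $\delta\mapsto\max_{\gamma\in\V(M)}\delta(\gamma)$ over a finite set of integral vertices), the inequality $\delta(\gamma)\leq f_M(\delta)$ for all integral $\delta$ already forces it for all rational $\delta$ by scaling, and then for all real $\delta$ by density and continuity of both sides in $\delta$. This shows $\N(M) = \{\gamma \mid \delta(\gamma)\leq f_M(\delta)\ \forall\delta\in\mb{Z}^{Q_0}\}$.

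Second, I would replace $f_M(\delta)$ by $\hom(\delta,M)$. By Theorem \ref{T:HomE}, for each $\delta\in\mb{Z}^{Q_0}$ there is some $n\in\mb{N}$ with $f_M(n\delta) = \hom(n\delta,M)$; combined with homogeneity $f_M(n\delta) = n f_M(\delta)$, we get that the family of constraints $\{n\delta(\gamma)\leq \hom(n\delta,M)\}_{\delta,n}$ cuts out the same polytope as $\{\delta(\gamma)\leq f_M(\delta)\}_\delta$ — one direction because each such $n\delta$ realizes an $f_M$-constraint, the other because the general inequality $\hom(\delta,M)\geq f_M(\delta)$ (which holds for every $\delta$, since $\hom(d,M)\geq\delta(\dv L)$ for any $L\hookrightarrow M$ factoring through $\Coker(d)$, so $\hom(\delta,M)\geq \max_{L\hookrightarrow M}\delta(\dv L)=f_M(\delta)$) makes each $\hom$-constraint weaker than the corresponding $f_M$-constraint, hence automatically satisfied on $\N(M)$. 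Thus $\{\gamma\mid\delta(\gamma)\leq\hom(\delta,M)\ \forall\delta\in\mb{Z}^{Q_0}\} = \N(M)$, which is the first assertion. The statement for $\check\N(M)$ follows by the dual version of Theorem \ref{T:HomE} (the identities $\fc_M(\check n\dtc) = \hom(M,\check n\dtc)$) applied verbatim to the dual Newton polytope, whose support function is $\fc_M$.

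The only genuinely delicate point is the inequality $\hom(\delta,M)\geq f_M(\delta)$ that makes all the $\hom$-constraints no stronger than the corresponding $f_M$-constraints: one must check that a subrepresentation $L\hookrightarrow M$ of dimension vector $\dv L=\gamma$ indeed contributes $\delta(\gamma)\leq\hom(d,M)$ for a general (equivalently, any sufficiently general) presentation $d$ of weight $\delta$. This is exactly the kind of statement underlying the passage from $\E$ to subrepresentations used in \cite{Ft}, and I would simply cite the relevant lemma there rather than reprove it; everything else is elementary convexity and homogeneity bookkeeping, so I do not anticipate a substantive obstacle.
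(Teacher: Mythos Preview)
Your proposal is correct and follows essentially the same approach as the paper: the text preceding the corollary sets up the Hahn--Banach description $\N(M)=\{\gamma\mid \delta(\gamma)\leq f_M(\delta)\ \forall\delta\}$ and then simply says the corollary ``follows easily from Theorem~\ref{T:HomE}'', which is exactly the two-step reduction (real $\to$ integral, then $f_M\to\hom$) that you spell out. One small wording issue: your parenthetical justification ``$\hom(d,M)\geq\delta(\dv L)$ for any $L\hookrightarrow M$ factoring through $\Coker(d)$'' is garbled---no factoring condition on $L$ is needed or wanted, and the inequality $f_M(\delta)\leq\hom(\delta,M)$ holds for every $L\hookrightarrow M$ directly from $\hom(d,M)\geq\hom(d,L)\geq\delta(\dv L)$ (the paper uses this freely, e.g.\ just before \eqref{eq:f=hom}); but since you defer this point to \cite{Ft} anyway, it is not a genuine gap.
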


\noindent We know a priori that the Newton polytope has a (finite) hyperplane representation. 
In fact we only need those $\delta$-vectors which are outer normal vectors of $\N(M)$.
It is an interesting problem to find a finite set of $\delta$-vectors determining the Newton polytope.
This is achieved for general representations of any acyclic quiver (Theorem \ref{T:Newton}).

We remark that for the presentation in Corollary \ref{C:allfaces} we cannot further require that $\delta$ is indivisible.
An $\delta$-vector is called {\em indecomposable} (\cite{DF}) if a general presentation in $\PHom(\delta)$ is indecomposable.
\begin{corollary}[{\cite[Corollary 4.3]{Ft}}] \label{C:indnormal} An indivisible outer normal vector of $\N(M)$ is an indecomposable $\delta$-vector.
\end{corollary}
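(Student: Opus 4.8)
\textbf{Proof proposal for Corollary \ref{C:indnormal}.}

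The plan is to argue by contrapositive: I want to show that if an indivisible $\delta$-vector is \emph{decomposable} — meaning a general presentation $d\in\PHom(\delta)$ decomposes as $d = d_1\oplus d_2$ with $\delta = \delta_1 + \delta_2$ and both $\delta_1,\delta_2$ nonzero — then $\delta$ cannot be an outer normal vector of a facet of $\N(M)$. The natural mechanism is the additivity/subadditivity of the tropical $F$-polynomial together with Theorem \ref{T:HomE}. Recall that $f_M(\delta) = \max_{\gamma\in\N(M)}\delta(\gamma)$, so $\delta$ being an outer normal vector of $\N(M)$ is equivalent to saying the face $\{\gamma\in\N(M)\mid \delta(\gamma) = f_M(\delta)\}$ is a facet (codimension $1$).

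First I would record the key inequality: by the subadditivity noted after the definition of $\hom(\delta,N)$ in the excerpt (``$\hom(\delta,M)\le \hom(\delta_1,M)+\hom(\delta_2,M)$'' when $\delta=\delta_1+\delta_2$), combined with Theorem \ref{T:HomE} which gives $f_M(n\delta)=\hom(n\delta,M)$ for suitable $n$ (and we may scale all three of $\delta,\delta_1,\delta_2$ to a common $n$ by the last sentence of that theorem), one deduces $f_M(\delta)\le f_M(\delta_1)+f_M(\delta_2)$ whenever $\delta=\delta_1+\delta_2$ comes from a decomposition of a general presentation. But actually for a genuine direct-sum decomposition of presentations $d=d_1\oplus d_2$ one gets $\hom(d,M) = \hom(d_1,M)+\hom(d_2,M)$ exactly, since $\Hom(\Coker(d),M)=\Hom(\Coker(d_1),M)\oplus\Hom(\Coker(d_2),M)$; passing to general presentations and using Theorem \ref{T:HomE} yields the \emph{equality} $f_M(\delta)=f_M(\delta_1)+f_M(\delta_2)$ (after a common scaling, which changes nothing since $f_M$ is positively homogeneous). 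Thus the linear functional $\delta$ attains its max on $\N(M)$ at exactly the Minkowski sum of the faces where $\delta_1$ and $\delta_2$ attain their maxima, i.e. the face of $\N(M)$ cut out by $\delta$ is the intersection of the faces cut out by $\delta_1$ and $\delta_2$.

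Now I would exploit indivisibility. Since $\delta$ is indivisible but $\delta=\delta_1+\delta_2$ with $\delta_i\ne 0$, the vectors $\delta_1,\delta_2$ are not both positive scalar multiples of $\delta$; because $\delta$ is indivisible, at least one of $\delta_1,\delta_2$ — say $\delta_1$ — is not parallel to $\delta$. Then the hyperplane $\{\delta(\gamma)=f_M(\delta)\}$ and the hyperplane $\{\delta_1(\gamma)=f_M(\delta_1)\}$ are distinct supporting hyperplanes of $\N(M)$, and the face cut out by $\delta$, being contained in the face cut out by $\delta_1$ (which is a proper face of $\N(M)$ lying in a hyperplane transverse to that of $\delta$), has codimension at least $2$ in $\N(M)$ — unless the face cut out by $\delta_1$ is all of $\N(M)$, which would force $\delta_1$ constant on $\N(M)$, i.e. $\delta_1\perp(\N(M)-\N(M))$; iterating/handling this degenerate case by noting it forces $\delta$ and $\delta_2$ to cut out the same face, again transverse hyperplanes. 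Either way the $\delta$-face is not a facet, so $\delta$ is not an outer normal vector. The main obstacle I anticipate is bookkeeping the degenerate cases cleanly — when $\N(M)$ is lower-dimensional, or when some $\delta_i$ happens to be constant on $\N(M)$ — so that "contained in a proper face lying in a transverse hyperplane" genuinely yields codimension $\ge 2$; this is where one must be careful that indivisibility of $\delta$ really does prevent $\delta_1\parallel\delta$ and $\delta_2\parallel\delta$ simultaneously, and translate that into transversality of the supporting hyperplanes. The rest is routine convex geometry plus the cited additivity of $f_M$.
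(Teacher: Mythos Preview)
The paper itself does not prove this corollary; it is quoted from \cite{Ft} without argument, so there is nothing in the present paper to compare your proof against. Your overall strategy---show that a decomposable indivisible $\delta$ cuts out a face of codimension $\ge 2$ by writing the $\delta$-face as an intersection of the $\delta_1$- and $\delta_2$-faces---is sound and is very likely close in spirit to the intended proof.

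There are two points where your justification is not yet complete. First, the step ``because $\delta$ is indivisible, at least one of $\delta_1,\delta_2$ is not parallel to $\delta$'' does not follow from indivisibility alone: for instance $\delta=2\delta+(-\delta)$ has both summands parallel. What actually rules this out is that the $\delta_i$ arise from a direct-sum splitting of a presentation in $\PHom(\delta)=\Hom(P(\delta_-),P(\delta_+))$, so $(\delta_i)_+\le\delta_+$ and $(\delta_i)_-\le\delta_-$ coordinatewise; combined with indivisibility this forces $\delta_i\in\{0,\delta\}$ if $\delta_i\parallel\delta$, which is excluded. Second, to pass from the exact additivity $\hom(d,M)=\hom(d_1,M)+\hom(d_2,M)$ to $f_M(\delta)=f_M(\delta_1)+f_M(\delta_2)$ you need that the summands $d_i$ may be taken \emph{general} in $\PHom(\delta_i)$ (this is the canonical decomposition theorem of \cite{DF}), and that the splitting persists after replacing $\delta$ by $n\delta$ so that Theorem~\ref{T:HomE} applies; the latter follows because $\e(\delta_1,\delta_2)=0$ implies $\e(n\delta_1,n\delta_2)=0$. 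Once these two points are made explicit, your argument goes through: the $\delta$-face lies in the $\delta_1$-face, the two supporting hyperplanes are transverse (assuming $\N(M)$ is full-dimensional, which one may reduce to), and hence the $\delta$-face has codimension at least $2$.
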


\subsection{General Presentations and the $\tau$-tilting theory}
For readers more familiar with $\tau$-tilting theory \cite{AIR}, we include this short section to explain the relationship between general presentations and $\tau$-tilting theory.
Recall that a decorated representation of $A=\mb{C}Q/I$ is a pair $\mc{M}=(M, V)$, where $M \in \rep(A)$ and $V\in\rep(\mb{C}Q_0)$ is called the decorated part of $\mc{M}$. We call a presentation {\em negative} if it is of the form  $P\to 0$, which will be denoted by $P[1]$. 

As explained at the beginning of \cite[Section 7]{DF}, there is a bijection between the category of {\em decorated representations} and the category of presentations. 
Note that a decorated representation $(M,V)$ becomes an ordinary representation if we set the decorated part $V$ to be zero.
Specifically, this bijection maps any representation $(M,0)$ to its minimal presentation $d_M$, and simple negative representation $(0,S_i)$ to $P_i[1]$. Conversely, any presentation $f$ splits into $f' \oplus P_{-}[1]$ where $f'$ has no summands of negative presentations.
We map $f$ to the decorated representation $({\rm Coker}(f), P_-/{\rm rad}(P_-))$.
We observe that if the summands of a rigid $f$ contain some $P_i[1]$, then the cokernel $M$ of $f$ cannot be supported on $i\in Q_0$.
This is because $M(i)\cong {\rm Hom}_A(P_i,M)=0$.
In this way, support $\tau$-tilting pairs \cite{AIR} can be viewed as some special kind of decorated representations.

Under this bijection a $\tau$-rigid module corresponds to a $\E$-rigid module by \cite[Lemma 7.5]{DF},
a $\tau$-tilting module corresponds to a {$\E$-rigid module} with $|Q_0|$ indecomposable summands (they turn out be maximal rigid by \cite[Theorem 5.4]{DF}), and a support $\tau$-tilting pair $(M,P)$ corresponds to a {maximal rigid presentation} $d_M\oplus P[1]$.

The framework of \cite{DF,Ft} and the current paper goes beyond $\tau$-tilting theory because the presentations are not assumed to be rigid. We refer the readers to the article \cite{GPtau}, which explains in more details how the results in \cite[Section 5]{DF} imply the main results in \cite{AIR}.

\section{Functors associated to $\delta$} \label{S:stab}
\subsection{The Construction} A. King introduced Mumford's GIT into the setting of quiver representation theory \cite{Ki}.
He derived a nice criterion for the (semi)-stability of a representation.
Here we state his criterion as our definition for the stability.
\begin{definition}[{\cite[Proposition 3.1]{Ki}}]  \label{D:King} A representation $M$ is called {\em $\delta$-semistable} (resp. {\em $\delta$-stable}) if $\delta(\dv M)=0$ and $\delta(\dv L)\leq 0$ $($resp. $\delta(\dv L)<0$$)$ for any non-trivial subrepresentation $L$ of $M$.
\end{definition}
\noindent The set $\ss{\delta}{A}$ of all $\delta$-semistable representations form an abelian subcategory of $\rep(A)$.
A special case of Theorem \ref{T:HomE} is the following.
\begin{corollary} \label{C:ss} A representation $M$ is $\delta$-semistable iff $\hom(n\delta,M) = \delta(\dv M)=0$ for some $n\in\mb{N}$.
\end{corollary}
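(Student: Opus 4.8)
The plan is to pass through the tropical $F$-polynomial. First I would restate King's criterion (Definition \ref{D:King}) in the form: $M$ is $\delta$-semistable if and only if $\delta(\dv M)=0$ and $\delta(\dv L)\leq 0$ for every subrepresentation $L\hookrightarrow M$, i.e. $\delta(\dv M)=0$ and $f_M(\delta)\leq 0$. Since $L=0$ is always a subrepresentation, $f_M(\delta)\geq 0$, so this is equivalent to $\delta(\dv M)=f_M(\delta)=0$; and by positive homogeneity of $f_M$ it is equivalent to $\delta(\dv M)=0$ together with $f_M(n\delta)=0$ for one (equivalently every) $n\geq 1$. Thus it suffices to show that, given $\delta(\dv M)=0$, the conditions ``$f_M(n\delta)=0$ for some $n$'' and ``$\hom(n\delta,M)=0$ for some $n$'' are equivalent.

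For the forward implication I would simply invoke Theorem \ref{T:HomE}: it provides an $n\in\mb{N}$ with $f_M(n\delta)=\hom(n\delta,M)$, so if $f_M(n\delta)=0$ for all $n$ then this particular $\hom(n\delta,M)$ vanishes. For the reverse implication, suppose $\hom(n\delta,M)=0$ and $\delta(\dv M)=0$, and pick a general presentation $d\in\PHom(n\delta)$, so $\Hom(d,M)=0$; from the identity $n\delta(\dv M)=\hom(d,M)-\e(d,M)$ coming from \eqref{eq:longexact} we also get $\e(d,M)=0$. Now for any $L\hookrightarrow M$ with cokernel $N$, the six-term exact sequence of Lemma \ref{L:E}.(1) for $d$ forces $\Hom(d,L)=0$ (its first term), hence $n\delta(\dv L)=\hom(d,L)-\e(d,L)=-\e(d,L)\leq 0$, i.e. $\delta(\dv L)\leq 0$. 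Since $L$ was arbitrary, $f_M(\delta)\leq 0$, and combined with $f_M(\delta)\geq 0$ this gives $f_M(\delta)=0$; together with $\delta(\dv M)=0$ this is exactly $\delta$-semistability.

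The argument is short and essentially bookkeeping; the only point requiring a little care is the non-uniform ``for some $n$'' in Theorem \ref{T:HomE}, which is why the reverse direction is routed through Lemma \ref{L:E} and the elementary dimension-count identity rather than directly through Theorem \ref{T:HomE}. I do not anticipate a genuine obstacle beyond making sure the same general $d$ realizes both $\Hom(d,M)=0$ and $\E(d,M)=0$ so that the six-term sequence can be exploited.
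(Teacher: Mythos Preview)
Your proof is correct and essentially matches the paper, which presents the corollary as an immediate special case of Theorem~\ref{T:HomE}; your six-term exact sequence argument for the reverse direction is just an explicit unpacking of the inequality $f_M(n\delta)\leq\hom(n\delta,M)$ that the paper invokes elsewhere. Your final worry is unfounded: $\e(d,M)=0$ is forced by $\hom(d,M)=0$ and $n\delta(\dv M)=0$ via the identity coming from \eqref{eq:longexact}, and in fact your argument only uses the injection $\Hom(d,L)\hookrightarrow\Hom(d,M)$, not the vanishing of $\E(d,M)$.
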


Now we start our construction.
\begin{lemma}\label{L:sub=h} Let $L$ be any subrepresentation of $M$. Then $\delta(\dv L)= \hom(\delta,M)$ if and only if $\hom(\delta,M/L)=\e(\delta,L)=0$.
	Moreover, if $L'$ is another such subrepresentation, that is,  $\delta(\dv L')=\hom(\delta,M)$, then both 
	$L\cap L'$ and $L+ L'$ are such subrepresentations.
\end{lemma}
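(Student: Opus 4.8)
The plan is to package the equation $\delta(\dv M) = \hom(\delta, M) - \e(\delta, M)$ (stated just before Theorem \ref{T:HomE}, applied to the presentation underlying $\delta$) together with the long exact sequence from Lemma \ref{L:E}.(1) to transfer statements about $L$ and $M/L$ into statements about $\hom$ and $\e$. First I would fix a general presentation $d \in \PHom(\delta)$ computing $\hom(\delta, -)$ and $\e(\delta, -)$ simultaneously on the finitely many representations involved (possible by upper semicontinuity, shrinking the open set). Applying Lemma \ref{L:E}.(1) to $0 \to L \to M \to M/L \to 0$ gives the six-term exact sequence relating $\Hom(d, L), \Hom(d, M), \Hom(d, M/L), \E(d, L), \E(d, M), \E(d, M/L)$. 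Additivity of dimension along this sequence yields
$$\hom(\delta, M) = \hom(\delta, L) + \hom(\delta, M/L) - \bigl(\e(\delta,L) - \dim\Img(\E(d,L)\to\E(d,M))\bigr).$$
Since always $\hom(\delta, L) \le \hom(\delta, ?) $ is not quite what we want — rather, the key inequalities I would invoke are $\delta(\dv L) = \hom(\delta,L) - \e(\delta,L) \le \hom(\delta,L) \le \hom(\delta, M)$ (the last because $L \hookrightarrow M$ and by Theorem \ref{T:HomE}, $\hom(\delta,M)= f_M(\delta) \ge \delta(\dv L)$ after passing to a multiple of $\delta$; but actually the cleaner route is: $\hom(\delta, M) = f_M(\delta) = \max_{L'\hookrightarrow M}\delta(\dv L')$ need not hold at $n=1$, so I must be careful — see the obstacle paragraph).

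For the forward direction, suppose $\delta(\dv L) = \hom(\delta, M)$. From the exact sequence, $\hom(\delta, M) = \hom(\delta, M/L) + \dim\ker(\Hom(d,M)\to\Hom(d,M/L))$ and $\dim\ker(\cdots) \le \hom(\delta, L)$, with equality iff $\E(d,L)\to\E(d,M)$ is injective. Combining with $\hom(\delta,L) = \delta(\dv L) + \e(\delta,L) = \hom(\delta,M) + \e(\delta,L)$, one gets $\hom(\delta,M) \ge \hom(\delta,M/L) + \hom(\delta,M) + \e(\delta,L) - (\text{correction}) $, and chasing the inequalities forces $\hom(\delta, M/L) = 0$ and $\e(\delta, L) = 0$. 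Conversely, if $\hom(\delta, M/L) = \e(\delta, L) = 0$, then the exact sequence immediately gives $\hom(\delta, M) \le \hom(\delta, L) = \delta(\dv L) + \e(\delta, L) = \delta(\dv L)$, and the reverse inequality $\delta(\dv L) \le \hom(\delta, M)$ holds because $\delta(\dv L) = \hom(\delta, L) \le \hom(\delta, M)$ (the map $\Hom(d,M)\to\Hom(d,M/L)=0$ shows $\Hom(d,L)\twoheadleftarrow\Hom(d,M)$... — again I must check the direction of the relevant map carefully). For the last sentence, given $L, L'$ with $\delta(\dv L) = \delta(\dv L') = \hom(\delta, M)$, I would use the standard short exact sequences $0 \to L\cap L' \to L \oplus L' \to L + L' \to 0$ and $0 \to L + L' \to M \to M/(L+L') \to 0$ plus $0 \to L/(L\cap L') \to M/L' \to M/(L+L') \to 0$. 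Apply the characterization just proved: $\hom(\delta, M/L) = 0$ and $M/(L+L')$ is a quotient of $M/L$, so $\hom(\delta, M/(L+L')) = 0$; similarly $\e(\delta, L\cap L') = 0$ as a sub of $L$ with $\e(\delta, L) = 0$ (using Lemma \ref{L:E}.(1): $\E(d,-)$ is right-exact-ish, so $\e$ of a subobject is controlled by $\e$ of the object — need $\E(d,L\cap L')\hookrightarrow$ something, which follows since $\Hom(d, (L\oplus L')/(L\cap L')) \to \E(d, L\cap L')$ surjects and the target sits inside $\E(d,L)\oplus\E(d,L') = 0$). Then $\delta(\dv(L\cap L')) = \hom(\delta, L\cap L')$ and $\delta(\dv(L+L')) = \hom(\delta, L+L')$; adding the additivity relation $\dv(L\cap L') + \dv(L+L') = \dv L + \dv L'$ and using that $\hom(\delta, \cdot)$ of both $L\cap L'$ and $M/(L+L')$ behave correctly, a short computation pins both values to $\hom(\delta, M)$.

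The main obstacle I anticipate is the direction-of-arrows bookkeeping in the long exact sequence together with the subtlety flagged in Remark \ref{r:n=1}: $\hom(\delta, M)$ equals $f_M(\delta)$ only after replacing $\delta$ by a multiple $n\delta$ in general, yet the lemma is stated for $\delta$ itself. I expect the resolution is that Lemma \ref{L:sub=h} should be read with $\hom(\delta, M)$ meaning exactly $\dim\Hom(C, M)$ for $C = \Coker(\delta)$ (not $f_M(\delta)$), so the argument is purely homological-algebra via Lemma \ref{L:E}.(1) and the identity $\delta(\dv M) = \hom(\delta,M) - \e(\delta,M)$, with no appeal to Theorem \ref{T:HomE} needed at all. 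Under that reading, the inequality $\delta(\dv L) \le \hom(\delta, M)$ for arbitrary $L \hookrightarrow M$ is the one genuinely nontrivial input, and it follows from the exact sequence $\Hom(d, M) \to \Hom(d, L) \to$ ... wait — the functoriality goes $\Hom(d, L) \to \Hom(d, M)$, so one gets $\hom(\delta, L) \le \hom(\delta, M) + \dim\Img(\Hom(d,M/L)\to\E(d,L))$, which is not an immediate inequality; instead I would derive $\delta(\dv L) \le \hom(\delta, M)$ from $\delta(\dv L) = \hom(\delta,L) - \e(\delta,L)$ and a careful accounting of where $\E(d,L)$ lands. Getting this bound cleanly — essentially re-deriving the "$\hom(\delta,-)$ is monotone on subobjects modulo $\E$" statement — is the crux; everything else is the lattice/exact-sequence chase sketched above.
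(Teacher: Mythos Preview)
Your overall strategy matches the paper's: everything is purely homological, using only the identity $\delta(\dv N)=\hom(\delta,N)-\e(\delta,N)$ and the six-term sequence of Lemma~\ref{L:E}.(1), with no appeal to Theorem~\ref{T:HomE}. You are right that $\hom(\delta,M)$ here means the generic value of $\hom(d,M)$, not $f_M(\delta)$.

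Where your sketch stalls is exactly the inequality you flag as ``the one genuinely nontrivial input'', namely $\delta(\dv L)\le\hom(\delta,M)$. In fact this is the \emph{trivial} step, and the paper dispatches it in one line. Since $\Hom(d,-)=\Hom(\Coker d,-)$ is left exact, the first map $\Hom(d,L)\to\Hom(d,M)$ in the six-term sequence is injective, so $\hom(\delta,L)\le\hom(\delta,M)$; then
\[
\delta(\dv L)=\hom(\delta,L)-\e(\delta,L)\le\hom(\delta,L)\le\hom(\delta,M).
\]
Your sentence ``one gets $\hom(\delta, L) \le \hom(\delta, M) + \dim\Img(\ldots)$, which is not an immediate inequality'' is simply wrong: there is no correction term. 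With this chain in hand the forward direction is immediate (equality throughout forces $\e(\delta,L)=0$; the dual chain using the surjection $\E(d,M)\twoheadrightarrow\E(d,M/L)$ forces $\hom(\delta,M/L)=0$), which is precisely the paper's argument.

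For the ``moreover'' part, your subobject/quotient reasoning is partly backwards. The claim ``$\e(\delta, L\cap L') = 0$ as a sub of $L$ with $\e(\delta, L) = 0$'' is not automatic: $\E(d,-)$ is not left exact, and your parenthetical only shows that $\E(d,L\cap L')$ maps to zero in $\E(d,L)\oplus\E(d,L')$, not that it vanishes. Likewise ``$\hom(\delta, M/(L+L')) = 0$ because it is a quotient of $M/L$'' fails since $\Hom(d,-)$ is not right exact. The paper's route is to first prove $\hom(\delta,C)=0$ for $C:=L/(L\cap L')\cong(L+L')/L'\hookrightarrow M/L'$ (this \emph{is} a subobject inclusion, so left-exactness applies), and then read off $\e(\delta,L\cap L')=0$ from the piece $0=\Hom(d,C)\to\E(d,L\cap L')\to\E(d,L)=0$ of the six-term sequence for $0\to L\cap L'\to L\to C\to 0$. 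The statement for $L+L'$ then drops out of the dimension-vector identity $\dv(L\cap L')+\dv(L+L')=\dv L+\dv L'$. Your lattice-sequence approach via $0\to L\cap L'\to L\oplus L'\to L+L'\to 0$ can also be completed, but only once you have the clean monotonicity $\hom(\delta,L)\le\hom(\delta,M)$ above.
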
	
\begin{proof} Assume that $\delta(\dv L)= \hom(\delta,M)=h$. We have that
	$h=\delta(\dv L)\leq \hom(\delta,L)\leq \hom(\delta,M)=h$, so $\hom(\delta,L)=h$ and $\e(\delta,L)=0$.	
	Similarly, $\e(\delta,N)=\e(\delta,M)$ and $\hom(\delta,N)=0$ where $N=M/L$.
	Conversely, if $\hom(\delta,N)=\e(\delta,L)=0$, then there is an exact sequence for a general presentation $d$ of weight $\delta$
	$$0\to \Hom(d,L) \to \Hom(d,M)\to \Hom(d,M/L)=0.$$ 
    This implies that 
	$\delta(\dv L)= \hom(\delta,L)-0 =\hom(\delta,M).$
	
	Now suppose that there is another such representation, say $L'$. 
	Let $I=L\cap L'$ and $C$ be the cokernel of $I \hookrightarrow L$,
	which is also isomorphic to the cokernel of $L' \hookrightarrow L+ L'$.
	Since $\hom(\delta, M/L')=0$, we have that $\hom(\delta, (L+ L')/L')=0$ so $\hom(\delta, C)=0$.
	We conclude that $\hom(\delta,I)=h$ as well.
	In the meanwhile, $\e(\delta, L)=0$.
	We read from the part of the long exact sequence of Lemma \ref{L:E}.(1) for a general presentation $d$ of weight $\delta$
	$$0=\Hom(d,C) \to \E(d,I) \to \E(d,L) =0$$
	that $\e(\delta,I)=0$. Hence,  $\delta(\dv I)=h$.
	Finally, $\delta(\dv(L+L')) = h$ follows from the isomorphism $(L+L')/L'\cong L/(L\cap L')$.
\end{proof}

Let $\mc{L}(\delta,M)$ be the set of all subrepresentations $L$ of $M$ such that $\delta(\dv L) = f_M(\delta)$.
Since $f_M$ is piecewise linear, we also have that
\begin{equation} \label{eq:f=hom} \mc{L}(\delta,M) = \{L\subseteq M \mid  n\delta(\dv L)=\hom(n\delta,M) \}, \end{equation}
where $n$ is any number in Theorem \ref{T:HomE}. 
Since we always have $f_M(n\delta)\leq \hom(n\delta,M)$,
$$\mc{L}(\delta,M) = \{L\subseteq M \mid  n\delta(\dv L)=\hom(n\delta,M) \text{ for some $n\in\mb{N}$}  \}.$$
The following theorem was proved in \cite[1.4]{BKT}. Here we give another proof.

\begin{theorem}[{\cite{BKT}}] \label{T:maxminsub} The set $\mc{L}(\delta,M)$ contains a unique minimal element $L_{\min}$ and a unique maximal element $L_{\max}$.
	Moreover, $L_1/L_0$ is $\delta$-semistable for any $L_0\subset L_1$ in $\mc{L}(\delta,M)$.
\end{theorem}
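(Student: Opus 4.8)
The plan is to leverage Lemma \ref{L:sub=h} as the workhorse: it already establishes that $\mc{L}(\delta,M)$ is closed under intersection and sum. Since $M$ is finite-dimensional, $\mc{L}(\delta,M)$ is a finite lattice under these two operations, so it automatically has a unique minimal element $L_{\min}=\bigcap_{L\in\mc{L}(\delta,M)}L$ and a unique maximal element $L_{\max}=\sum_{L\in\mc{L}(\delta,M)}L$. This disposes of the first sentence of the statement with essentially no extra work beyond invoking the lemma.

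For the semistability claim, fix $L_0\subset L_1$ in $\mc{L}(\delta,M)$ and set $N=L_1/L_0$. First I would verify $\delta(\dv N)=0$: since $\delta(\dv L_0)=\delta(\dv L_1)=f_M(\delta)$ and dimension vectors are additive on the short exact sequence $0\to L_0\to L_1\to N\to 0$, we get $\delta(\dv N)=\delta(\dv L_1)-\delta(\dv L_0)=0$. Then I must show $\delta(\dv L')\leq 0$ for every subrepresentation $L'\subseteq N$. Write $L'=\tilde L/L_0$ for a unique $L_0\subseteq\tilde L\subseteq L_1\subseteq M$. The key point is that $\delta(\dv\tilde L)\leq f_M(\delta)$ always, because $\tilde L$ is a subrepresentation of $M$ and $f_M(\delta)=\max_{L\hookrightarrow M}\delta(\dv L)$; hence $\delta(\dv L')=\delta(\dv\tilde L)-\delta(\dv L_0)=\delta(\dv\tilde L)-f_M(\delta)\leq 0$. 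Combined with $\delta(\dv N)=0$, this is precisely King's criterion (Definition \ref{D:King}) for $N$ being $\delta$-semistable, and it also shows we need not separately rule out $L'=0$ since the inequality is $\leq$.

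The argument as sketched is short, and the only subtlety — which I expect to be the real content one should double-check rather than a genuine obstacle — is the finiteness needed to pass from ``closed under $\cap$ and $+$'' to ``has unique min and max.'' Here one uses that $M$ has only finitely many subrepresentations of any given dimension vector up to nothing (they form a variety, but the point is that the poset of all subrepresentations of a finite-dimensional $M$, while possibly infinite as a set, has finitely many possible dimension vectors, and within $\mc{L}(\delta,M)$ one can still form the intersection of the whole — possibly infinite — family and it lands in $\mc{L}(\delta,M)$ by an iterated application of Lemma \ref{L:sub=h} together with the fact that a descending chain of subrepresentations of $M$ stabilizes). So I would phrase $L_{\min}$ and $L_{\max}$ directly as the intersection and sum over all of $\mc{L}(\delta,M)$, note that a Noetherian/Artinian argument reduces each to a finite intersection resp. sum, and then Lemma \ref{L:sub=h} applies. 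Alternatively, one can observe that all members of $\mc{L}(\delta,M)$ have dimension vectors lying on the face of $\N(M)$ cut out by $\delta$, which constrains them enough; but the lattice-theoretic argument is cleaner and I would present that.
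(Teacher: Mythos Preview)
Your argument is correct, and for the first clause it follows the same lattice idea as the paper (closure under $\cap$ and $+$, then take the global intersection and sum). One small imprecision: Lemma \ref{L:sub=h} as stated concerns subrepresentations with $\delta(\dv L)=\hom(\delta,M)$, not $=f_M(\delta)$; the paper bridges this by first replacing $\delta$ with $n\delta$ via \eqref{eq:f=hom} so the two quantities coincide. You should either do the same, or simply observe that closure of $\mc{L}(\delta,M)$ under $\cap$ and $+$ follows directly from $\dv(L\cap L')+\dv(L+L')=\dv L+\dv L'$ together with the maximality defining $f_M(\delta)$---this sidesteps the lemma entirely. Your Noetherian/Artinian remark on why the global intersection/sum still lands in $\mc{L}(\delta,M)$ is a point the paper elides.

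For the semistability of $L_1/L_0$ your route genuinely differs from the paper's. You verify King's inequality directly: lift any $L'\subseteq L_1/L_0$ to $\tilde L\subseteq M$ and use $\delta(\dv\tilde L)\le f_M(\delta)=\delta(\dv L_0)$. The paper instead works homologically: having passed to weight $n\delta$, Lemma \ref{L:sub=h} gives $\hom(n\delta,M/L_0)=0$, hence $\hom(n\delta,L_1/L_0)=0$, and then Corollary \ref{C:ss} yields $\delta$-semistability. Your argument is more elementary and self-contained (it needs neither Theorem \ref{T:HomE} nor Corollary \ref{C:ss}); the paper's argument is shorter in context because it recycles the $\hom$-machinery already in place.
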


\begin{proof} Consider the equivalent definition of $\mc{L}(\delta,M)$ \eqref{eq:f=hom}, and fix such an $n$.
	By Lemma \ref{L:sub=h} for weight equal to $n\delta$, the set is closed under intersection and summation, so it must contain a unique minimal element and a unique maximal element.
	For the last statement, we have that $\delta(\dv(L_1/L_{0}))=0$.
	By Lemma \ref{L:sub=h} $\hom(n\delta,M/L_{0})=0$ so $\hom(n\delta,L_1/L_{0})=0$.
	Hence $L_1/L_{0}$ is $\delta$-semistable by Corollary \ref{C:ss}.
\end{proof}
\noindent We remark that it is possible to have $L_{\min} = L_{\max}$.

\begin{definition} \label{D:stabilization} 
	We define the {\em stabilization functor} $\dtb^\perp$ associated to $\delta\in \mb{Z}^{Q_0}$ by 
	$$\dtb^\perp(M)=L_{\max}/L_{\min},$$
	 where $L_{\min}$ and $L_{\max}$ are the subrepresentations in Theorem \ref{T:maxminsub}.
\end{definition}
\noindent We shall see in the next subsection that $L_{\min}$ and $L_{\max}$ are defined by the functors associated to some torsion theory, so $\dtb^\perp$ is in fact functorial.

\begin{remark} \label{R:dual} We can easily use $\Hom(-,\check{\delta})$ and $\Ec(-,\check{\delta})$ to reformulate the results in this subsection.
	Instead of working with injective presentations, one may consider the relations $\Hom(\delta,M)= \Ec(M,-\delta)$ and $\E(\delta,M)= \Hom(M,-\delta)$.	
\end{remark}

\subsection{Relation to the Torsion Theory} \label{ss:torsion}
Recall that a {\em torsion class} in $\rep A$ is a full subcategory of $\rep A$ which is closed under images, direct sums, and extensions, 
and a {\em torsion-free class} in $\rep A$ is a full subcategory of $\rep A$ which is closed under subrepresentations, direct sums, and extensions.
Consider the torsion free class
$${\mc{F}}(\delta) = \{N\in \rep(A)\mid \hom(\delta,N) =0 \}$$
and the torsion class
$$\check{\mc{T}}(\delta) = \{L\in \rep(A) \mid \e(\delta, L) =0 \}.$$
We denote associated pair of functors by $(t_\delta,f_\delta)$ and $(\tc_\delta,\fc_\delta)$ (see \cite[Proposition VI.1.4]{ASS}).
To be more explicit, for any representation $M$,
$t_\delta(M)$ is the smallest subrepresentation $L$ of $M$ such that $\hom(\delta,M/L)=0$, and
$\tc_\delta(M)$ is the largest subrepresentation $L$ of $M$ such that $\e(\delta,L)=0$ (see \cite[the proof of Proposition VI.1.4]{ASS}).

\begin{lemma} \label{L:L=tf} If $f_M(n\delta)=\hom(n\delta,M)$, then $L_{\max}=\tc_{n\delta}(M)$ and $L_{\min}=t_{n\delta}(M)$.
\end{lemma}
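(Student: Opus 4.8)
The plan is to identify $L_{\max}$ with $\tc_{n\delta}(M)$ and $L_{\min}$ with $t_{n\delta}(M)$ by showing each side satisfies the defining extremal property of the other, using the characterization of $\mc{L}(n\delta,M)$ from Lemma \ref{L:sub=h} and equation \eqref{eq:f=hom}. Write $\delta' = n\delta$ for brevity, and recall that under the hypothesis $f_M(\delta') = \hom(\delta',M)$ we have $\mc{L}(\delta,M) = \{L \subseteq M \mid \delta'(\dv L) = \hom(\delta',M)\}$, and by Lemma \ref{L:sub=h} this is exactly the set of $L$ with $\hom(\delta', M/L) = 0$ and $\e(\delta', L) = 0$.

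First I would prove $L_{\min} = t_{\delta'}(M)$. By definition $t_{\delta'}(M)$ is the smallest subrepresentation $L$ with $M/L \in \mc{F}(\delta')$, i.e.\ with $\hom(\delta', M/L) = 0$. Every $L \in \mc{L}(\delta,M)$ has $\hom(\delta', M/L) = 0$ by Lemma \ref{L:sub=h}, so $t_{\delta'}(M) \subseteq L$ for all such $L$; in particular $t_{\delta'}(M) \subseteq L_{\min}$. For the reverse inclusion I would check $t_{\delta'}(M) \in \mc{L}(\delta,M)$: it already satisfies $\hom(\delta', M/t_{\delta'}(M)) = 0$, and since $\tc(M) := t_{\delta'}(M)$ is a subrepresentation of $M$ sitting inside $L_{\min}$, which itself is a subrepresentation of the torsion-free-class member\ldots actually the cleaner route is: $M/t_{\delta'}(M)$ is torsion-free, and $t_{\delta'}(M)$ is a torsion representation; torsion representations $L$ satisfy $\hom(\delta', L) = \delta'(\dv L)$ in a way that forces $\e(\delta',L)=0$ — but to be careful I would instead argue directly that since $L_{\min}/t_{\delta'}(M)$ embeds in $M/t_{\delta'}(M) \in \mc{F}(\delta')$, it lies in the torsion-free class, while being a quotient of $L_{\min} \in \check{\mc{T}}$-ish\ldots. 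The safe argument: $L_{\min} \in \mc{L}(\delta,M)$ gives $\e(\delta', L_{\min}) = 0$, so $L_{\min} \in \check{\mc{T}}(\delta')$; by Theorem \ref{T:maxminsub}, $L_{\min}/t_{\delta'}(M)$ — wait, we don't yet know $t_{\delta'}(M) \subseteq L_{\min}$ lies in $\mc{L}$. Rather: $\delta'(\dv t_{\delta'}(M)) \geq \hom(\delta', t_{\delta'}(M)) - \e(\delta', t_{\delta'}(M))$ is automatic; combine with $\hom(\delta', M/t_{\delta'}(M)) = 0$ and the long exact sequence of Lemma \ref{L:E}.(1) to get $\hom(\delta', t_{\delta'}(M)) = \hom(\delta', M) = f_M(\delta')$ and $\e(\delta', M) \twoheadrightarrow$ from $\e(\delta', t_{\delta'}(M))$; then since $L_{\min} \subseteq$? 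No — I would simply invoke Lemma \ref{L:sub=h}'s converse direction, which only needs $\hom(\delta', M/L) = 0$ and $\e(\delta', L) = 0$, and I get the first of these for free for $L = t_{\delta'}(M)$; for $\e(\delta', t_{\delta'}(M)) = 0$ I use that $t_{\delta'}(M) \subseteq L_{\min}$ (just proven) and $\e(\delta', L_{\min}) = 0$, and $\e$ is left-exact-ish on subrepresentations via Lemma \ref{L:E}.(1): the sequence $\Hom(d, L_{\min}/t_{\delta'}(M)) \to \E(d, t_{\delta'}(M)) \to \E(d, L_{\min}) = 0$ shows $\e(\delta', t_{\delta'}(M)) = 0$ provided $\hom(\delta', L_{\min}/t_{\delta'}(M)) = 0$, which holds because $L_{\min}/t_{\delta'}(M) \hookrightarrow M/t_{\delta'}(M) \in \mc{F}(\delta')$. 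Hence $t_{\delta'}(M) \in \mc{L}(\delta,M)$ and therefore $L_{\min} \subseteq t_{\delta'}(M)$, giving equality.

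The argument for $L_{\max} = \tc_{\delta'}(M)$ is dual in spirit: $\tc_{\delta'}(M)$ is the largest subrepresentation in $\check{\mc{T}}(\delta')$, i.e.\ with $\e(\delta', L) = 0$. Every $L \in \mc{L}(\delta,M)$ satisfies $\e(\delta', L) = 0$, so $L \subseteq \tc_{\delta'}(M)$; in particular $L_{\max} \subseteq \tc_{\delta'}(M)$. Conversely I would show $\tc_{\delta'}(M) \in \mc{L}(\delta,M)$: it has $\e(\delta', \tc_{\delta'}(M)) = 0$ by definition, and for $\hom(\delta', M/\tc_{\delta'}(M)) = 0$ I use that $M/\tc_{\delta'}(M) \in \mc{F}(\delta')$ is a standard property of the torsion pair $(\check{\mc{T}}(\delta'), \mc{F}(\delta'))$ — namely the quotient by the torsion part is torsion-free — which is exactly \cite[Proposition VI.1.4]{ASS} applied to this pair, together with the observation that $\check{\mc{T}}(\delta')$ and $\mc{F}(\delta')$ do form a torsion pair (one checks $\Hom$-orthogonality and the existence of the canonical sequence, or cites that this is precisely the torsion pair used to define $(t_{\delta'}, f_{\delta'})$ and $(\tc_{\delta'}, \fc_{\delta'})$ in the text just above). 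Then $\tc_{\delta'}(M) \in \mc{L}(\delta,M)$ by Lemma \ref{L:sub=h}, so $\tc_{\delta'}(M) \subseteq L_{\max}$, and equality follows.

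The main obstacle I anticipate is the bookkeeping around whether $\check{\mc{T}}(\delta) = \{L \mid \e(\delta,L) = 0\}$ and $\mc{F}(\delta) = \{N \mid \hom(\delta,N) = 0\}$ genuinely form a single torsion pair (so that the quotient-is-torsion-free property applies) versus being two a priori unrelated classes; the text introduces both and names the associated functors, so I would lean on that, but I should make sure the subtlety that $t_\delta$ comes from $\mc{F}(\delta)$ and $\tc_\delta$ from $\check{\mc{T}}(\delta)$ is handled — i.e.\ I must verify $(\check{\mc{T}}(\delta), \mc{F}(\delta))$ is a torsion pair, which reduces to showing $\hom(\delta, L) = \delta(\dv L)$ for $L \in \check{\mc{T}}(\delta)$ and the Hom-vanishing $\Hom(\check{\mc{T}}(\delta), \mc{F}(\delta)) $-type orthogonality, all of which follow from the long exact sequence \eqref{eq:longexact} and Lemma \ref{L:E}.(1). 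The rest is routine diagram chasing with the exact sequences already recorded in the excerpt.
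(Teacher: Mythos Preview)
Your argument for $L_{\min}=t_{n\delta}(M)$ is correct and is essentially the dual of the paper's proof. The paper argues the $L_{\max}$ half first: for any $L\supseteq L_{\max}$ with $\e(n\delta,L)=0$, one applies the long exact sequence to $0\to L/L_{\max}\to M/L_{\max}\to M/L\to 0$, using $\hom(n\delta,M/L_{\max})=0$ (Lemma~\ref{L:sub=h}) and $\e(n\delta,L/L_{\max})=0$ (quotient of $L$), to conclude $\hom(n\delta,M/L)=0$; hence $L\in\mc{L}(\delta,M)$ and $L=L_{\max}$. Your $L_{\min}$ proof is the mirror image of this, so that half is fine.

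Your $L_{\max}$ argument, however, has a genuine gap. You want $\hom(n\delta,M/\tc_{n\delta}(M))=0$ and try to get it by asserting that $(\check{\mc{T}}(n\delta),\mc{F}(n\delta))$ is a torsion pair, so that the quotient by the torsion part lands in $\mc{F}(n\delta)$. But these two classes do \emph{not} form a torsion pair: any nonzero $n\delta$-semistable representation $V$ satisfies both $\hom(n\delta,V)=0$ and $\e(n\delta,V)=0$, so $V\in\check{\mc{T}}(n\delta)\cap\mc{F}(n\delta)$ while $\Hom(V,V)\neq 0$, and $\Hom$-orthogonality fails. Your proposed verification ``via the long exact sequence'' cannot succeed for this reason. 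In the paper's setup, $t_\delta$ comes from the torsion pair whose torsion-free class is $\mc{F}(\delta)$, and $\tc_\delta$ from the \emph{different} torsion pair whose torsion class is $\check{\mc{T}}(\delta)$; the corresponding partner classes are not the ones you named.

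The fix is immediate: simply dualize your own $L_{\min}$ argument. You already have $L_{\max}\subseteq \tc_{n\delta}(M)$; now $\tc_{n\delta}(M)/L_{\max}$ is a quotient of $\tc_{n\delta}(M)\in\check{\mc{T}}(n\delta)$, so $\e(n\delta,\tc_{n\delta}(M)/L_{\max})=0$, and the long exact sequence for $0\to \tc_{n\delta}(M)/L_{\max}\to M/L_{\max}\to M/\tc_{n\delta}(M)\to 0$ squeezes $\hom(n\delta,M/\tc_{n\delta}(M))$ between two zeros. This is exactly the paper's computation with $L=\tc_{n\delta}(M)$.
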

\begin{proof} Let $L$ be any subrepresentation of $M$ containing $L_{\max}$ such that $\e(n\delta,L)=0$.
	Consider the exact sequence
$$0= \Hom(n\delta, M/L_{\max}) \to \Hom(n\delta, M/L) \to \E(n\delta, L/L_{\max})=0,$$
where the vanishing of $\Hom(n\delta, M/L_{\max})$ and $\E(n\delta, L/L_{\max})$ are due to Lemma \ref{L:sub=h} and Lemma \ref{L:E}.(1) respectively.
Hence $\hom(n\delta, M/L)=0$.
So $n\delta (\dv L) = \hom(n\delta,M)$ by Lemma \ref{L:sub=h}.
By the maximality of $L_{\max}$, we have $L=L_{\max}$.
The other half can be proved similarly.
\end{proof}


Let us consider the two sets
\begin{align*} {\mc{F}}(\br{\delta}) &= \{N\in \rep(A)\mid \hom(n\delta,N) =0 \text{ for some } n\in\mb{N} \},\\
\check{\mc{T}}(\br{\delta}) &= \{L\in \rep(A) \mid \e(n\delta, L) =0   \text{ for some } n\in\mb{N}  \}.
\end{align*}
\noindent Clearly we have that ${\mc{F}}(m\delta) \subseteq {\mc{F}}(\br{\delta})$ and $\check{\mc{T}}(m\delta) \subseteq \check{\mc{T}}(\br{\delta})$ for any $m\in \mb{N}$.

\begin{lemma} \label{L:} ${\mc{F}}(\br{\delta})$ is a torsion-free class, and $\check{\mc{T}}(\br{\delta})$ is a torsion class. 
\end{lemma}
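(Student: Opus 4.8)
The plan is to establish the two assertions separately, in each case showing closure under the three operations defining a (torsion-free) torsion class. For $\mc{F}(\br{\delta})$, recall that $\mc{F}(\br{\delta}) = \bigcup_{n\in\mb{N}} \mc{F}(n\delta)$, where each $\mc{F}(n\delta)$ is the torsion-free class associated to the weight $n\delta$; so the union is directed with respect to the obvious inclusions $\mc{F}(m\delta)\subseteq \mc{F}(mk\delta)$ (which follow from the subadditivity $\hom(mk\delta,N)\le k\,\hom(m\delta,N)$ or, more elementarily, from the fact that if a general presentation of weight $m\delta$ has no map onto $N$ then neither does a general one of weight $mk\delta$). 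Actually the cleanest phrasing avoids the directedness subtlety: I would work directly from the defining condition ``$\hom(n\delta,N)=0$ for some $n$.''

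First I would check closure of $\mc{F}(\br{\delta})$ under subrepresentations. If $N\in\mc{F}(\br{\delta})$, pick $n$ with $\hom(n\delta,N)=0$; for any $N'\hookrightarrow N$ the exact sequence $0\to\Hom(d,N')\to\Hom(d,N)$ from \eqref{eq:longexact} (applied to a general presentation $d$ of weight $n\delta$) forces $\hom(n\delta,N')=0$, hence $N'\in\mc{F}(\br{\delta})$. Next, closure under direct sums and extensions can be handled together: given $0\to N_1\to N\to N_2\to 0$ with $N_1\in\mc{F}(n_1\delta)$ and $N_2\in\mc{F}(n_2\delta)$, I would set $n=n_1 n_2$ (or any common multiple) and use that $\mc{F}(n_i\delta)\subseteq\mc{F}(n\delta)$ — this is the step where I need the monotonicity of the classes along multiples of $\delta$ — so that both $N_1,N_2\in\mc{F}(n\delta)$. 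Then the long exact sequence $0\to\Hom(d,N_1)\to\Hom(d,N)\to\Hom(d,N_2)$ for a general $d$ of weight $n\delta$ sandwiches $\hom(n\delta,N)$ between $0$ and $0$, giving $N\in\mc{F}(n\delta)\subseteq\mc{F}(\br{\delta})$. Direct sums are the special case of split extensions, so nothing extra is needed. The dual argument, using $\E(d,-)$ in place of $\Hom(d,-)$ and the full six-term sequence of Lemma \ref{L:E}.(1), shows $\check{\mc{T}}(\br{\delta})$ is closed under quotients, direct sums, and extensions, hence is a torsion class; alternatively one can invoke Remark \ref{R:dual} and the relation $\E(\delta,M)=\Hom(M,-\delta)$ to deduce the $\check{\mc{T}}$ statement from the $\mc{F}$ statement by duality.

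The main obstacle I anticipate is the monotonicity claim $\mc{F}(m\delta)\subseteq\mc{F}(mk\delta)$ (equivalently $\check{\mc{T}}(m\delta)\subseteq\check{\mc{T}}(mk\delta)$), which is exactly what lets me choose a single weight $n$ that works simultaneously for all finitely many objects in an extension. This is not automatic from the pointwise inequality $\hom(\delta_1+\delta_2,M)\le\hom(\delta_1,M)+\hom(\delta_2,M)$ alone — that gives the right direction ($\hom(mk\delta,M)\le k\,\hom(m\delta,M)$, so $\hom(m\delta,M)=0\Rightarrow\hom(mk\delta,M)=0$), so in fact the subadditivity recorded just after the definition of $\hom(\delta,N)$ (``$\hom(\delta,M)\le\hom(\delta_1,M)+\hom(\delta_2,M)$'') does suffice. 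I would cite that inequality explicitly. With that in hand, everything else is a routine diagram chase through \eqref{eq:longexact} and Lemma \ref{L:E}.(1), and the proof is short.
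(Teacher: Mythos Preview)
Your proposal is correct and follows essentially the same approach as the paper: the key nontrivial step in both is closure under extensions, handled by passing to a common multiple $n=n_1n_2$ so that both endpoints lie in $\mc{F}(n\delta)$ (resp.\ $\check{\mc{T}}(n\delta)$), then applying the long exact sequence of Lemma~\ref{L:E}.(1). The paper phrases the monotonicity $\mc{F}(m\delta)\subseteq\mc{F}(mk\delta)$ as ``upper semi-continuity'' rather than ``subadditivity,'' but this is the same fact (indeed the subadditivity inequality you cite is itself derived from upper semi-continuity in the paper), and the paper only spells out the $\check{\mc{T}}$ case while you spell out $\mc{F}$---otherwise the arguments coincide.
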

\begin{proof} We only verify that $\check{\mc{T}}(\br{\delta})$ is closed under extension because closed under direct sum and image is rather trivial. Let $0\to L \to M\to N\to 0$ be an exact sequence with $L,N\in \check{\mc{T}}(\br{\delta})$.
	Suppose that $\e(n_1\delta, L) =0$ and $\e(n_2\delta, N) =0$, then $\e(n_1n_2\delta, L) =\e(n_1n_2\delta, N) =0$ by the upper semi-continuity.
	Hence $\e(n_1n_2\delta, M)=0$.
\end{proof}
In view of Theorem \ref{T:HomE}, the torsion pairs $(\mc{T}(\br{\delta}),\mc{F}(\br{\delta}))$ and $(\check{\mc{T}}(\br{\delta}),\check{\mc{F}}(\br{\delta}))$ are equivalent to the ones $(\mc{T}_{\delta},\br{\mc{F}}_{\delta})$ and $(\br{\mc{T}}_\delta,\mc{F}_{\delta})$ introduced in \cite{BKT}, and further studied in \cite{BST} and \cite{As}.
Let us recall their definitions.
\begin{align*} \mc{T}_{\delta} : &= \{M\in \rep(A) \mid \delta(\dv N)>0 \text{ for any quotient representation $N\neq 0$ of $M$} \}  \\
\br{\mc{F}}_{\delta} : &= \{M\in \rep(A) \mid \delta(\dv L)\leq 0 \text{ for any subrepresentation $L$ of $M$}\};
\end{align*}
\begin{align*} \br{\mc{T}}_\delta : &= \{M\in \rep(A) \mid \delta(\dv N)\geq 0 \text{ for any quotient representation $N$ of $M$} \}  \\
\mc{F}_{\delta} : &= \{M\in \rep(A) \mid \delta(\dv L)<0 \text{ for any subrepresentation $L\neq 0$ of $M$} \}.
\end{align*}
Since we do not have a homological interpretation for $\mc{T}(\br{\delta})$ and $\check{\mc{F}}(\br{\delta})$,
the definition provides us with a concrete description for them from another viewpoint.

\begin{definition} We denote by $(t_{\br{\delta}},f_{\br{\delta}})$ and $(\tc_{\br{\delta}},\fc_{\br{\delta}})$ the pairs of functors associated to the torsion pair $({\mc{T}}(\br{\delta}),{\mc{F}}(\br{\delta}))$ and $(\check{\mc{T}}(\br{\delta}),\check{\mc{F}}(\br{\delta}))$.
\end{definition}
\noindent Similarly, $t_\dtb(M)$ is the smallest subrepresentation $L$ of $M$ such that $\hom(n\delta,M/L)=0$ for some $n\in\mb{N}$, and $\tc_{\br{\delta}}(M)$ is the largest subrepresentation $L$ of $M$ such that $\e(n\delta,L)=0$ for some $n\in\mb{N}$.

\begin{theorem} \label{T:torsion} We have that for any representation $M$ and any $\delta \in \mb{Z}^{Q_0}$,
\begin{align*}t_{\dtb}(M)=L_{\min}\ &\text{ and }\  f_{\dtb}(M)=M/L_{\min};\\
 \tc_{\dtb}(M)=L_{\max}\ &\text{ and }\ \fc_{\dtb}(M)=M/L_{\max}.\end{align*}
	In particular, we have for any $L\in\mc{L}(\delta,M)$ that
	$$\Hom(t_\dtb(M),M/L) = 0\ \text{ and }\ \Hom(L,\fc_\dtb(M)) = 0.$$
\end{theorem}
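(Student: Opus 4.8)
The plan is to reduce everything to the explicit description of $L_{\min}$ and $L_{\max}$ via the torsion pairs $(\mc{T}(\br{\delta}),\mc{F}(\br{\delta}))$ and $(\check{\mc{T}}(\br{\delta}),\check{\mc{F}}(\br{\delta}))$, and then upgrade the single-weight statements of Lemma \ref{L:L=tf} to the ``$\br{\delta}$'' versions by choosing a common $n$. First I would fix $n\in\mb{N}$ such that $f_M(n\delta)=\hom(n\delta,M)$, as furnished by Theorem \ref{T:HomE}; by \eqref{eq:f=hom} this $n$ also computes $\mc{L}(\delta,M)$, so $L_{\min}$ and $L_{\max}$ are exactly the minimal and maximal elements there. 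Lemma \ref{L:L=tf} then identifies $L_{\max}=\tc_{n\delta}(M)$ and $L_{\min}=t_{n\delta}(M)$ for this particular $n$.

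Next I would show $t_{\dtb}(M)=t_{n\delta}(M)$ and $\tc_{\dtb}(M)=\tc_{n\delta}(M)$. For the latter: $\tc_{n\delta}(M)\in\check{\mc{T}}(n\delta)\subseteq\check{\mc{T}}(\br{\delta})$, so $\tc_{n\delta}(M)\subseteq \tc_{\dtb}(M)$ by maximality. Conversely, if $L\subseteq M$ lies in $\check{\mc{T}}(\br\delta)$, say $\e(m\delta,L)=0$, I want to conclude $L\in\check{\mc{T}}(n\delta)$, i.e.\ $\e(n\delta,L)=0$; this needs a monotonicity/comparison argument for $\e(k\delta,L)$ as $k$ grows along the ray, which is where I expect the main friction. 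The cleanest route is to invoke that $\e(k\delta,L)=0$ for one $k$ forces it for all sufficiently divisible $k'$ (the ``kn'' clause of Theorem \ref{T:HomE} applied to $L$, together with the fact that $L\in\check{\mc T}(\br\delta)$ means $\fc_L(-k\delta)=0$ for some $k$, hence $\e(k'\delta,L)=0$ whenever $k'$ is a multiple of the relevant index), and similarly replace $n$ by a common multiple of $n$ and that index from the very start — then $\check{\mc{T}}(\br\delta)$-membership is detected at weight $n\delta$. The torsion-free side is dual, using $\mc{F}(\br\delta)$, $\hom$ in place of $\check{\mc T}(\br\delta)$, $\e$, and the minimality of $t$. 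This yields the first two displayed equalities; $f_{\dtb}(M)=M/L_{\min}$ and $\fc_{\dtb}(M)=M/L_{\max}$ are then the defining cokernels.

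For the ``in particular'' clause: since $f_{\dtb}(M)=M/L_{\min}\in\mc{F}(\br\delta)$, we have $\hom(n'\delta,M/L_{\min})=0$ for suitable $n'$, hence $\hom(n'\delta, (M/L)\,) $ ... more directly, for $L\in\mc{L}(\delta,M)$ we have $L\supseteq L_{\min}$, so $M/L$ is a quotient of $M/L_{\min}=f_{\dtb}(M)\in\mc{F}(\br\delta)$, and $\mc{F}(\br\delta)$ is closed under quotients (being the torsion-free class of a torsion pair — images of its objects stay inside); thus $M/L\in\mc{F}(\br\delta)$. Now $t_{\dtb}(M)=L_{\min}\in\mc{T}(\br\delta)$, and $\Hom$ from the torsion class to the torsion-free class vanishes, giving $\Hom(t_\dtb(M),M/L)=0$. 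Dually, $L\subseteq L_{\max}=\tc_{\dtb}(M)\in\check{\mc T}(\br\delta)$, which is closed under subrepresentations, so $L\in\check{\mc T}(\br\delta)$; and $\fc_{\dtb}(M)=M/L_{\max}\in\check{\mc F}(\br\delta)$; vanishing of $\Hom$ from the torsion class to the torsion-free class then gives $\Hom(L,\fc_\dtb(M))=0$.

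The main obstacle, as indicated, is the stabilization-along-the-ray step: verifying that membership in the ``$\br\delta$''-classes is actually witnessed at the single weight $n\delta$ (for a well-chosen $n$), rather than only at some uncontrolled multiple. I would handle this by fixing $n$ at the outset to be simultaneously an index realizing $f_M(n\delta)=\hom(n\delta,M)$ and large/divisible enough (via the ``$kn$'' stability in Theorem \ref{T:HomE}, applied to the relevant subquotients $L_{\min}$, $L_{\max}$, $M/L_{\min}$, $M/L_{\max}$) that all four torsion-theoretic conditions are detected at weight $n\delta$; everything else is then a formal consequence of the torsion-pair formalism of \cite[Proposition VI.1.4]{ASS} and Lemma \ref{L:L=tf}.
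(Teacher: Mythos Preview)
Your approach to the main identities is essentially the paper's: fix $n$ with $f_M(n\delta)=\hom(n\delta,M)$, identify $L_{\min}=t_{n\delta}(M)$ and $L_{\max}=\tc_{n\delta}(M)$ via Lemma~\ref{L:L=tf}, then compare with the $\br\delta$-functors. The ``stabilization along the ray'' step can be done more cleanly than you suggest. You do not need to find a single $n$ that detects $\check{\mc T}(\br\delta)$-membership uniformly. Rather, given any $L\subseteq M$ with $\e(m\delta,L)=0$, upper semi-continuity (sub-additivity of $\e(-,L)$ along the ray) gives $\e(mn\delta,L)=0$, and the ``moreover'' clause of Theorem~\ref{T:HomE} says $f_M(mn\delta)=\hom(mn\delta,M)$ as well; hence $L\subseteq \tc_{mn\delta}(M)=L_{\max}$, where the last equality holds because $L_{\max}$ depends only on $\mc L(\delta,M)$, not on the particular multiple. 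This is exactly the paper's argument, and it avoids the circularity you flag in your final paragraph.

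There is, however, a genuine error in your ``in particular'' clause. You write that $\mc F(\br\delta)$, being torsion-free, is closed under quotients, and that $\check{\mc T}(\br\delta)$, being a torsion class, is closed under subrepresentations. Both closure claims are false: torsion-free classes are closed under \emph{sub}representations (and extensions), torsion classes under \emph{quotients} (and extensions). So neither ``$M/L$ is a quotient of $f_\dtb(M)\in\mc F(\br\delta)$'' nor ``$L$ is a sub of $\tc_\dtb(M)\in\check{\mc T}(\br\delta)$'' lets you conclude membership. The fix is immediate from what you already have: for $L\in\mc L(\delta,M)$, Lemma~\ref{L:sub=h} (at weight $n\delta$) gives $\hom(n\delta,M/L)=0$ and $\e(n\delta,L)=0$ directly, so $M/L\in\mc F(\br\delta)$ and $L\in\check{\mc T}(\br\delta)$ without any closure argument; the two $\Hom$-vanishings then follow from the torsion-pair orthogonality. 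This is how the paper proceeds.
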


\begin{proof} We have seen in Lemma \ref{L:L=tf} that $L_{\max}$ is the largest subrepresentation $L$ of $M$ such that $\e(n\delta,L)=0$ where $n$ is any number such that $f_M(n\delta)=\hom(n\delta,M)$.
	Suppose that $\e(m\delta,L)=0$ for some $m\in\mb{N}$, then $\e(mn\delta,L)=0$ by the upper semi-continuity. By the moreover part of Theorem \ref{T:HomE} we have that $f_M(mn\delta)=\hom(mn\delta,M)$ so $L\subseteq L_{\max}$. 
	Hence $\tc_{\dtb}(M)=L_{\max}$ by the above comments. 
		
	By Lemma \ref{L:sub=h} $\mc{L}(\delta,M) \subseteq \check{\mc{T}}(\dtb)$. So $\Hom(L,\fc_\dtb(M)) = 0$ for all $L\in\mc{L}(\delta,M)$.
	The argument for $t_{\dtb}(M)$ and $\Hom(t_\dtb(M),M/L) = 0$ is similar.
\end{proof}

\begin{remark} Theorem \ref{T:HomE} and Lemma \ref{L:L=tf} imply that $t_{\dtb}(M) = t_{n\delta}(M)$ and $\tc_{\dtb}(M) = \tc_{n\delta}(M)$ for some $n\in\mb{N}$ but in general we only have that $t_{\dtb}(M) \subseteq t_{m\delta}(M)$ and $\tc_{\dtb}(M) \supseteq \tc_{m\delta}(M)$ for any $m\in \mb{N}$. But if $\e(\delta,\delta)=0$, we have that $t_{\dtb}(M) = t_{\delta}(M)$ and $\tc_{\dtb}(M) = \tc_{\delta}(M)$ by Remark \ref{r:n=1}.(1).
\end{remark}

In the following proposition we ignore all the subscripts $\dtb$ for the functors $t,f,\tc$, and $\fc$.
\begin{proposition} \label{P:relations} $t$ and $\check{t}$ are subfunctors of the identity functor satisfying $tt=t$ and $tf=0$.
	The functors $f$ and $\check{t}$ commute and $\check{t} f=f \check{t} = \dtb^\perp$.
	Moreover, we have the following relations
	$\check{t}t=t=t\check{t}, \fc f=\fc=f\fc$, and $t\fc=0=\fc t$.	
\end{proposition}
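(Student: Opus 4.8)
We work throughout with a fixed $\delta$ and drop the subscript $\br\delta$. The plan is to deduce everything from two structural facts already in hand: (i) $(\mc T,\mc F)$ and $(\check{\mc T},\check{\mc F})$ are torsion pairs (Lemma~\ref{L:}), so $t,\check t$ are subfunctors of the identity with $t(M)\in\mc T$, $M/t(M)\in\mc F$, and dually; and (ii) the concrete descriptions from Theorem~\ref{T:torsion}, namely $t(M)=L_{\min}$ and $\check t(M)=L_{\max}$ inside $M$, together with $\mc L(\delta,M)\subseteq\check{\mc T}$ and $\mc L(\delta,M)\cap\mc F$-orthogonality from that theorem.

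First I would record the purely torsion-theoretic identities. That $t$ is a subfunctor of $\Id$ with $tt=t$ and $tf=0$ is the standard idempotence of the torsion radical: $t(M)\in\mc T$ so applying $t$ again changes nothing, and $f(M)=M/t(M)\in\mc F$ has zero torsion part. The same applies verbatim to $\check t$ (it is the torsion radical for $\check{\mc T}$), giving $\check t\check t=\check t$ and $\check t\fc=0$; and dually $\fc f=\fc$, $\fc\fc=\fc$, $f\fc=\fc$ follow from $f\dashv$-type statements, i.e.\ $f(M)\in\mc F$ and $\mc F$ closed under quotients forces $\fc(f(M))=f(M)$, while $\fc(M)=M/\check t(M)$ applied to itself is idempotent.

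Next, the interaction between the two torsion pairs. The key inclusion is $t(M)=L_{\min}\subseteq L_{\max}=\check t(M)$, which holds because $\mc L(\delta,M)\neq\varnothing$ has both a minimum and a maximum by Theorem~\ref{T:maxminsub}, and both $L_{\min}$ and $L_{\max}$ lie in $\mc L(\delta,M)\subseteq\check{\mc T}$ while $L_{\min}$ is also the torsion-free-coradical kernel. From $L_{\min}\subseteq L_{\max}$ I get $\check t t=t$ (applying $\check t$ to $t(M)\subseteq M$: since $t(M)=L_{\min}\subseteq\check{\mc T}$ it is its own $\check t$), and $t\check t=t$ (the subrepresentation $L_{\min}$ of $M$ is already $L_{\min}$ of $L_{\max}$, because $\mc L(\delta,L_{\max})$ has the same minimum — here one uses that $\delta(\dv L_{\max})=f_M(\delta)=f_{L_{\max}}(\delta)$, so $\mc L(\delta,L_{\max})=\{L\in\mc L(\delta,M):L\subseteq L_{\max}\}$). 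Dually $\fc f=\fc$ already noted and $f\fc=\fc$, and the mixed vanishing $t\fc=0=\fc t$: $\fc(M)=M/L_{\max}$ and $t$ of a quotient by something in $\mc L$ is zero because $\Hom(t_\dtb(M),M/L)=0$ from Theorem~\ref{T:torsion} forces $t(M/L_{\max})=0$; symmetrically $\fc(t(M))=\fc(L_{\min})=L_{\min}/L_{\max}'$ where $L_{\max}'$ is the max of $\mc L(\delta,L_{\min})$, but $L_{\min}$ is itself minimal so $\mc L(\delta,L_{\min})=\{L_{\min}\}$ and $\fc(L_{\min})=0$.

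Finally, $\check t f=f\check t=\dtb^\perp$. Compute $\check t(f(M))=\check t(M/L_{\min})$: the subrepresentations of $M/L_{\min}$ in $\mc L(\delta,M/L_{\min})$ correspond to those $L\in\mc L(\delta,M)$ with $L\supseteq L_{\min}$ (using additivity of $\delta$ along the extension and $\delta(\dv L_{\min})$ attaining the max), whose maximum is $L_{\max}/L_{\min}$; hence $\check t f(M)=L_{\max}/L_{\min}=\dtb^\perp(M)$. Dually $f(\check t(M))=\check t(M)/t(\check t(M))=L_{\max}/t(L_{\max})=L_{\max}/L_{\min}$ using $t\check t=t$. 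That $f$ and $\check t$ commute is exactly this equality. The routine verification that each displayed equality is natural in $M$ (not just pointwise) follows since all constructions are functorial subobjects/quotients.

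\medskip

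I expect the main obstacle to be the bookkeeping identity $\mc L(\delta,L_{\max})=\{L\in\mc L(\delta,M):L\subseteq L_{\max}\}$ and its quotient analogue for $\mc L(\delta,M/L_{\min})$: one must check that the tropical $F$-polynomial behaves additively under these restrictions, i.e.\ $f_{L_{\max}}(\delta)=f_M(\delta)$ and $f_{M/L_{\min}}(\delta)=f_M(\delta)-\delta(\dv L_{\min})$, which is where Lemma~\ref{L:sub=h} (vanishing of $\hom(\delta,M/L)$ and $\e(\delta,L)$ for $L\in\mc L(\delta,M)$) does the real work. Once that is in place, every assertion in the proposition reduces to a one-line manipulation of $L_{\min}$ and $L_{\max}$.
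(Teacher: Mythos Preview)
Your approach is essentially the same as the paper's: reduce everything to standard torsion-pair idempotence together with the identification $\mc L(\delta,L_{\max})=\{L\in\mc L(\delta,M):L\subseteq L_{\max}\}$ (equivalently $f_{L_{\max}}(\delta)=f_M(\delta)$) and its dual. The paper's proof is terser but uses exactly this observation for $f\check t=\dtb^\perp$, the dual for $\check t f=\dtb^\perp$, and dismisses the remaining relations as ``rather easy''.

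There is one slip to fix. In your first paragraph you write that ``$f(M)\in\mc F$ and $\mc F$ closed under quotients forces $\fc(f(M))=f(M)$''. Two problems: torsion-free classes are closed under \emph{subobjects}, not quotients; and the conclusion $\fc f=f$ contradicts the target identity $\fc f=\fc$ (unless $\dtb^\perp(M)=0$). Fortunately your later computation of $\check t f(M)=L_{\max}/L_{\min}$ is correct and immediately yields $\fc f(M)=(M/L_{\min})/(L_{\max}/L_{\min})=M/L_{\max}=\fc(M)$, so just delete the erroneous shortcut. Similarly, your justification of $t\fc=0$ via ``$\Hom(t_\dtb(M),M/L)=0$ forces $t(M/L_{\max})=0$'' is not a direct implication; the clean argument is that $\hom(n\delta,M/L_{\max})=0$ by Lemma~\ref{L:sub=h}, so $M/L_{\max}\in\mc F(\br\delta)$ and hence $t(M/L_{\max})=0$.
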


\begin{proof} The first statement follows from \cite[Proposition VI.1.4]{ASS}.	
	We claim that $\check{t} f(M)=f \check{t} (M) = L_{\max}/L_{\min}$. 
	To show $f \check{t} (M) = L_{\max}/L_{\min}$, we need to verify that $L_{\min}$ is the minimal subrepresentation $L$ of $L_{\max}$ such that $\delta(\dv L)= f_{L_{\max}}(\delta)$. 
	Since any subrepresentation of $L_{\max}$ is also a subrepresentation of $M$,
	we have that $f_{L_{\max}}(\delta)=f_M(\delta)$ and $L_{\min}$ is the required minimal subrepresentation of $L_{\max}$.
	The dual argument shows that $\check{t} f(M)= L_{\max}/L_{\min}$.
	The other relations are rather easy to verify as well.
\end{proof}

\noindent Hence those functors fit into the following diagram of exact sequences
$$\crossminmax{t(M)}{t(M)}{0}{\check{t}(M)}{M}{\fc(M)}{\dtb^\perp(M)}{f(M)}{\fc(M)}$$
\noindent In particular, we see that $M$ is filtered by three factors $t_{\dtb}(M),\fc_{\dtb}(M)$, and $\dtb^\perp(M)$.
Moreover there is no homomorphism from $t_{\dtb}(M)$ to $\dtb^\perp(M)$ and from $\dtb^\perp(M)$ to $\fc_{\dtb}(M)$.

Suppose that $\hom(M,N)=h$. We choose a basis of $\Hom(M,N)$ and take $hM \to N$ to be the canonical map with respect to this basis.
We call this map a {\em universal homomorphism} from $\op{add}(M)$ to $N$. It is unique up to the action of $\Aut(hM)$.

\begin{corollary} \label{C:univhomo} Suppose that $d$ is a rigid presentation with weight $\delta$ and cokernel $C$.
	Then $t_\delta(M)$ is the image of the universal homomorphism $hC \to M$ while $\tc_\delta(M)$ is the kernel of the universal homomorphism $M \to e \Ker(\nu d)$,
	where $h=\hom(\delta,M)$ and $e=\e(\delta,M)$.
\end{corollary}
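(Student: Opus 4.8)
The plan is to identify $t_\delta(M)$ and $\tc_\delta(M)$ via the functorial descriptions already in hand: by Theorem~\ref{T:torsion} (or directly Lemma~\ref{L:L=tf}, since a rigid presentation is general and, being rigid, has $\e(\delta,\delta)=0$, so we may take $n=1$ by Remark~\ref{r:n=1}.(1)) we know $t_\delta(M)=L_{\min}$ is the smallest $L\subseteq M$ with $\hom(\delta,M/L)=0$, equivalently the smallest $L$ with $\Hom(C,M/L)=0$, and $\tc_\delta(M)=L_{\max}$ is the largest $L\subseteq M$ with $\e(\delta,L)=0$, equivalently (by Lemma~\ref{L:E}.(3)) the largest $L$ with $\Hom(L,\nu d)=\Hom(L,\Ker(\nu d))=0$, using that $d$ has no negative summands (rigidity forces $\Coker(d)=C$ to be supported everywhere $P_+$ is, so the relevant identification $\Ker(\nu d)=\tau C$ holds, though we only need $\Hom(L,\Ker\nu d)=\e(\delta,L)^*$).

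First I would treat $t_\delta(M)$. Let $u\colon hC\to M$ be the universal homomorphism, $h=\hom(\delta,M)=\hom(C,M)$, and let $L_0=\Img(u)$. Since $\Hom(C,-)$ is left exact, applying it to $0\to L_0\to M\to M/L_0\to 0$ shows every map $C\to M$ factors through $L_0$, so $\Hom(C,M/L_0)$ injects into... more carefully: the surjection $\Hom(C,M)\twoheadrightarrow\Hom(C,M)/(\text{maps killed in }M/L_0)$ — I would argue that the induced map $\Hom(C,M)\to\Hom(C,M/L_0)$ is zero because by construction $L_0$ contains the image of every element of a basis of $\Hom(C,M)$, hence $\hom(\delta,M/L_0)=0$, so $L_{\min}\subseteq L_0$. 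For the reverse inclusion, $L_{\min}=t_\delta(M)$ is a torsion object, so $\hom(\delta,t_\delta(M))=\dim\Hom(C,t_\delta(M))$; since $\Hom(C,-)$ is left exact and $\Hom(C,M/L_{\min})=0$, the map $\Hom(C,L_{\min})\to\Hom(C,M)$ is an isomorphism, so the universal map $hC\to M$ actually lands in $L_{\min}$, giving $L_0\subseteq L_{\min}$. Hence $L_0=L_{\min}=t_\delta(M)$.

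For $\tc_\delta(M)$ I would dualize: set $K=\Ker(\nu d)$, $e=\e(\delta,M)=\hom(M,K)$ (using Lemma~\ref{L:E}.(3)), let $v\colon M\to eK$ be the universal homomorphism and $L_1=\Ker(v)$. Applying the left-exact functor $\Hom(-,K)$ to $0\to L_1\to M\to M/L_1\to 0$ and arguing as above that $v$ being universal forces $\Hom(L_1,K)=0$ — since $\Hom(M,K)\to\Hom(L_1,K)$ is surjective onto the restrictions and every basis element of $\Hom(M,K)$ vanishes on $L_1$ by construction of $v$ — gives $\e(\delta,L_1)=0$, so $L_1\subseteq L_{\max}$. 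Conversely $L_{\max}$ has $\e(\delta,L_{\max})=\hom(L_{\max},K)=0$, and left-exactness of $\Hom(-,K)$ applied to $L_{\max}\hookrightarrow M$ shows $\Hom(M/L_{\max},K)\hookrightarrow\Hom(M,K)$, i.e. $v$ factors through $M/L_{\max}$, so $L_{\max}\subseteq\Ker(v)=L_1$; thus $L_1=L_{\max}=\tc_\delta(M)$.

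The main obstacle is the bookkeeping in the "universal map vanishes on the relevant sub/quotient" arguments — precisely, checking that because $u$ (resp. $v$) is built from a \emph{basis} of the full Hom-space, no nonzero element of $\Hom(C,M)$ (resp. $\Hom(M,K)$) survives to $\Hom(C,M/L_0)$ (resp. restricts nonzero to $\Hom(L_1,K)$). This is where one must be slightly careful that "image of the universal map" and "cokernel/kernel of the induced Hom-map" really coincide, rather than merely one containing the other; everything else is a formal consequence of left-exactness of the Hom functors and the already-established identifications $t_\delta(M)=L_{\min}$, $\tc_\delta(M)=L_{\max}$ together with $\hom(\delta,M)=\hom(C,M)$ and $\e(\delta,M)=\hom(M,\Ker\nu d)$.
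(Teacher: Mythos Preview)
Your containment $L_0\subseteq L_{\min}$ is fine, but the argument for $L_{\min}\subseteq L_0$ has a real gap. You correctly show that the map $\Hom(C,M)\to\Hom(C,M/L_0)$ is zero---every $f\colon C\to M$ has image in $L_0$ by construction of the universal map. But from this you jump to $\Hom(C,M/L_0)=0$, which does not follow: you would need the map to be \emph{surjective} as well. Left-exactness of $\Hom(C,-)$ is irrelevant here; by the long exact sequence of Lemma~\ref{L:E}.(1), the cokernel of $\Hom(C,M)\to\Hom(C,M/L_0)$ sits inside $\E(C,L_0)$. Your closing paragraph even misidentifies the obstacle as ``checking that no nonzero element of $\Hom(C,M)$ survives to $\Hom(C,M/L_0)$''---that is the easy direction you already have. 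What is missing is that every element of $\Hom(C,M/L_0)$ \emph{lifts} to $\Hom(C,M)$.

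This is exactly where rigidity enters, and it is the essential use of it (your invocation of rigidity merely to secure $n=1$ is beside the point). Since $L_0$ is a quotient of $hC$ and $\E(C,C)=0$, the long exact sequence gives $\E(C,L_0)=0$; now surjectivity holds and your argument closes. The paper packages this as $\mc{T}(\delta)=\op{Gen}(C)$ (which requires rigidity), so that $L_0\in\mc{T}(\delta)$ automatically, and then $L_0\in\mc{T}(\delta)$ together with $M/L_0\in\mc{F}(\delta)$ forces $L_0=t_\delta(M)$ by uniqueness of the torsion decomposition---no second containment argument needed. The remark immediately following the corollary exhibits a non-rigid $C$ for which the image of the universal map is strictly smaller than $t_\delta(M)$, so your argument as written cannot be completed without invoking $\E(C,C)=0$. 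The dual half has the identical gap: that $\Hom(M,K)\to\Hom(L_1,K)$ is zero does not yield $\Hom(L_1,K)=0$ without surjectivity of the restriction map, and that again needs rigidity (via $M/L_1\hookrightarrow eK$ and $\Hom(C,K)\cong\E(d,C)^*=0$).
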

\begin{proof} Let $I$ be the image of the universal homomorphism $hC\to M$.
	Then by \cite[Lemma VI.1.9]{ASS} and Lemma \ref{L:E}.(2), $\mc{T}(\delta)=\op{Gen}(C)$ and thus $I \in \mc{T}(\delta)$.
	We obtain a sequence 
	$$\Hom(C,hC)\twoheadrightarrow \Hom(C,M)\to \Hom(C,M/I)\to \E(C,I)= 0,$$
	which is exact at $\Hom(C,M/I)$.
	The last three terms are a part of the long exact sequence in Lemma \ref{L:E}.(1).
	Since the composition $hC\to M\to M/I$ is zero, $\Hom(C,M/I)$ has to be zero.
	We conclude that $\Hom(\delta, M/I)=0$, i.e., $M/I \in \mc{F}(\delta)$.
	Hence $I$ must be $t_\delta(M)$.
	The other half can be proved by the dual argument.
\end{proof}

\begin{remark} The rigidity is necessary in the above corollary. 
	Consider the $3$-arrow Kronecker quiver $\Kronthree{1}{2}$.
	Let $C$ and $M$ be general representations of dimension $(1,2)$ and $(2,1)$ respectively.
	Note that $C=\Coker(1,-1)$ and it is not rigid since $(1,-1)\cdot (1,2) = -1<0$.
	The dimension vectors of subrepresentations of $M$ are $(0,0),(0,1),(1,1)$, and $(2,1)$.
	So by Theorem \ref{T:HomE} $\hom(C,M)=1$.
	It is easy to check that the image of a nonzero homomorphism $C\to M$ has dimension $(1,1)$,
	but $\dv t_{\delta}(M) = \dv t_{\dtb}(M) = (2,1)$.
\end{remark}

\section{Vertices of ${\N}(M)$} \label{S:vertices}
Recall that $\V(M)$ is the set of vertices of $\N(M)$. 
\begin{lemma} \label{L:vertex} $\gamma\in {\V}(M)$ if and only if it is the dimension vector of $t_\dtb(M)$ or $\check{t}_\dtb(M)$ for some weight $\delta \in \mb{Z}^{Q_0}$.
\end{lemma}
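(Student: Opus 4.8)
The plan is to show both inclusions using the characterization of $\N(M)$ by its support function $f_M$ (Corollary \ref{C:allfaces} together with Theorem \ref{T:HomE}) and the description of $\mc{L}(\delta,M)$ from Theorem \ref{T:maxminsub} / Theorem \ref{T:torsion}. For the ``if'' direction, suppose $\gamma = \dv t_\dtb(M) = \dv L_{\min}$ for some $\delta$. Since $L_{\min}\in\mc{L}(\delta,M)$ we have $\delta(\gamma) = f_M(\delta) = \max_{L\hookrightarrow M}\delta(\dv L)$, so $\gamma$ lies on the supporting hyperplane $\{x\mid \delta(x) = f_M(\delta)\}$ of $\N(M)$, i.e.\ $\gamma$ belongs to the face $\sf\Lambda_\delta$ cut out by $\delta$. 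To upgrade ``face point'' to ``vertex'' I would use that $L_{\min}$ is the \emph{unique minimal} element of $\mc{L}(\delta,M)$: the dimension vectors of elements of $\mc{L}(\delta,M)$ are exactly the lattice points of $\N(M)$ lying in $\sf\Lambda_\delta$ that are realized by subrepresentations, and by Lemma \ref{L:sub=h} this set is closed under $\cap$ and $+$, hence is the set of lattice points of a sub-polytope whose $\delta$-minimal... — more carefully, one perturbs $\delta$: choose a generic $\delta'$ with $\delta'(\dv L_{\min}) < \delta'(v)$ for every other lattice point $v$ realized in $\sf\Lambda_\delta$, and check that $L_{\min}$ minimal forces $\dv L_{\min}$ to be the unique minimizer of some linear functional on $\N(M)$, hence a vertex. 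The dual argument (using $\fc_M$, $\check\N(M)$, and $\check{t}_\dtb$ being the \emph{maximal} element $L_{\max}$) handles $\check{t}_\dtb(M)$, or one can simply invoke Remark \ref{R:dual}.

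For the ``only if'' direction, let $\gamma\in\V(M)$ be a vertex. Then there is a linear functional $\delta\in(\mb{Z}^{Q_0})^*$ attaining its maximum over $\N(M)$ uniquely at $\gamma$, so by the description of $f_M$ (Theorem \ref{T:HomE}, after rescaling $\delta$ so that $f_M(\delta) = \hom(\delta,M) = \delta(\gamma)$) every $L\in\mc{L}(\delta,M)$ has $\dv L = \gamma$. In particular $\dv L_{\min} = \dv t_\dtb(M) = \gamma$, which gives the claim. (Alternatively one may note that a vertex of $\N(M)$ is realized by \emph{some} subrepresentation $L$ — this is clear since $\N(M)$ is the convex hull of dimension vectors of subrepresentations and vertices of a convex hull of finitely many points are among those points — and then $L\in\mc{L}(\delta,M)$ forces $L_{\min}$ to have the same dimension vector by the uniqueness of $\gamma$ as maximizer.)

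The main obstacle I anticipate is the ``face point $\Rightarrow$ vertex'' step in the ``if'' direction: a priori $\dv t_\dtb(M)$ could be a non-vertex lattice point of a positive-dimensional face $\sf\Lambda_\delta$. The resolution is to exploit that $L_{\min}$ is minimal \emph{as a subrepresentation}, not merely minimal in dimension, combined with Lemma \ref{L:sub=h}: if $\dv L_{\min}$ were not a vertex of $\sf\Lambda_\delta$, it would be a strict convex combination of other lattice points $\gamma_1,\gamma_2\in\sf\Lambda_\delta\cap\N(M)$; one then needs to produce subrepresentations of those dimension vectors inside $\mc{L}(\delta,M)$ and derive a contradiction with minimality — but subrepresentations realizing $\gamma_1,\gamma_2$ need not be contained in $L_{\min}$, so the direct approach stalls. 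The cleaner route, which I would follow, is the perturbation argument: pick $\delta'$ so that $\delta' + \ep\,\delta$ (for small rational $\ep > 0$) is maximized over $\N(M)$ exactly on the face $\sf\Lambda_\delta$ and within it is uniquely maximized at $\dv L_{\min}$; then $\mc{L}(\delta'+\ep\delta, M) = \mc{L}(\delta,M)$ has minimal element $L_{\min}$ again, and now $f_M(\delta'+\ep\delta) = (\delta'+\ep\delta)(\dv L_{\min})$ is attained uniquely in dimension at the vertex $\dv L_{\min}$, closing the argument. I would double-check that such a $\delta'$ can be chosen integral (clearing denominators) and that Theorem \ref{T:HomE} applies to the scaled vector.
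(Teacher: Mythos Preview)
Your ``only if'' direction is correct and matches the paper's argument exactly: pick $\delta$ in the interior of the normal cone at $\gamma$, so that $\gamma$ is the unique maximizer of $\delta$ on $\N(M)$, whence every $L\in\mc{L}(\delta,M)$ has $\dv L=\gamma$ and in particular $\dv t_\dtb(M)=\gamma$.

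The ``if'' direction, however, has a genuine gap. Your perturbation argument is circular as written: to choose a linear functional $\delta'$ whose restriction to the face ${\sf\Lambda}_\delta$ is uniquely maximized at $\dv L_{\min}$, you already need $\dv L_{\min}$ to be a vertex of ${\sf\Lambda}_\delta$, which is exactly what you are trying to prove. (Recall that the vertices of ${\sf\Lambda}_\delta$ are vertices of $\N(M)$, hence are themselves dimension vectors of elements of $\mc{L}(\delta,M)$; so the convex hull of $\{\dv L:L\in\mc{L}(\delta,M)\}$ equals ${\sf\Lambda}_\delta$, and a linear functional attains its maximum on this set only at vertices of the face.) Your claim $\mc{L}(\delta'+\ep\delta,M)=\mc{L}(\delta,M)$ is also not correct --- one only gets an inclusion, and identifying the new minimal element again requires the missing ingredient.

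That missing ingredient is the observation you dismissed when you wrote ``subrepresentations realizing $\gamma_1,\gamma_2$ need not be contained in $L_{\min}$.'' You have the containment backwards: by Theorem~\ref{T:maxminsub}, $L_{\min}$ is contained in \emph{every} $L\in\mc{L}(\delta,M)$, so $\dv L_{\min}\leq\dv L$ coordinatewise. In particular $\dv L_{\min}$ is coordinatewise $\leq$ every vertex of ${\sf\Lambda}_\delta$. But a point of a polytope that is coordinatewise $\leq$ all of its vertices must itself be a vertex (if $\dv L_{\min}=\sum c_iv_i$ with $c_i>0$, $\sum c_i=1$, and $\dv L_{\min}\leq v_i$, then equality is forced in every coordinate). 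This is precisely the paper's argument: ``$\dv L$ is the minimal or maximal dimension on this face, so $\dv L$ must be a vertex of this face.'' The dual statement for $\tc_\dtb(M)=L_{\max}$ uses coordinatewise maximality.
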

\begin{proof} Let $\gamma$ be a vertex of $\N(M)$. We choose a weight $\delta\in\mb{Z}^{Q_0}$ such that $\delta(-)$ reaches the maximum {\em only} at $\gamma$.
	In fact, any generic point $\delta$ in the normal cone attached to $\gamma$ will do the job.
	Then $t_\dtb(M) = \check{t}_\dtb(M)$ and $\gamma=\dv t_\dtb(M)$.
	
	Conversely, let $L=t_\dtb(M)$ or $\check{t}_\dtb(M)$ for some $\delta$. Recall the set 	$\mc{L}(\delta,M)$ consisting of
all subrepresentations $L$ of $M$ such that $\delta(\dv L) = f_M(\delta)$.
	Due to Corollary \ref{C:allfaces}, the convex hull of the dimension vectors of elements in $\mc{L}(\delta,M)$ is a face of $\N(M)$, and $\dv L$ is the minimal or maximal dimension on this face.
	So $\dv L$ must be a vertex of this face, and thus a vertex of $\N(M)$.
\end{proof}

\begin{theorem} \label{T:onept}  If $\gamma \in {\V}(M)$ then $\Gr_\gamma(M)$ must be a point.
\end{theorem}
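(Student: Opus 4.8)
The plan is to show that if $\gamma$ is a vertex of $\N(M)$, then $M$ has a \emph{unique} $\gamma$-dimensional subrepresentation, which forces $\Gr_\gamma(M)$ to be a single reduced point. By Lemma \ref{L:vertex}, since $\gamma\in\V(M)$ we may choose a weight $\delta\in\mb{Z}^{Q_0}$ whose maximum over $\N(M)$ is attained only at $\gamma$; equivalently, $\mc{L}(\delta,M)$ consists exactly of the $\gamma$-dimensional subrepresentations of $M$, and by Theorem \ref{T:maxminsub} this set has a unique minimal element $L_{\min}$ and a unique maximal element $L_{\max}$. So it suffices to prove $L_{\min}=L_{\max}$, i.e. that $\delta^\perp(M)=\dtb^\perp(M)=0$, equivalently that $\mc{L}(\delta,M)$ is a singleton.

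First I would use the last assertion of Theorem \ref{T:maxminsub}: for any $L_0\subseteq L_1$ in $\mc{L}(\delta,M)$, the quotient $L_1/L_0$ is $\delta$-semistable, and in fact $\delta(\dv(L_1/L_0))=0$ while every subrepresentation of it gets nonpositive $\delta$-value. Applying this with $L_0=L_{\min}$, $L_1=L_{\max}$, the module $W:=\dtb^\perp(M)=L_{\max}/L_{\min}$ is $\delta$-semistable. Now any $\delta$-semistable module $W$ has, for each $\gamma'$ with $0\le\gamma'\le\dv W$ and $\delta(\gamma')=0$, possibly many $\gamma'$-dimensional subrepresentations — but more to the point, $L_{\min}+W'$ ranges over all of $\mc{L}(\delta,M)$ as $W'$ ranges over the $0$-dimensional-on-$\delta$ subrepresentations of $W$ (those are automatically in $\mc{L}$ because $\delta(\dv(L_{\min}+W'))=\delta(\dv L_{\min})=f_M(\delta)$ once $\delta$ vanishes on $\dv W'$, using Lemma \ref{L:sub=h} to check the $\Hom/\E$ vanishing is inherited). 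So $\mc{L}(\delta,M)$ is in bijection with the $\delta$-null subrepresentations of $W$; if $W\ne 0$ then $W$ itself, being $\delta$-semistable and hence in particular $\delta$-null, contributes a subrepresentation strictly between $L_{\min}$ and $L_{\max}$, contradicting $L_{\min}=L_{\max}$ only if we already knew that — so I must instead derive $W=0$ directly from the vertex hypothesis.

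The cleanest route: because $\delta$ was chosen generic in the normal cone of the vertex $\gamma$, the face of $\N(M)$ cut out by $\delta$ is the single point $\{\gamma\}$; by Corollary \ref{C:allfaces} and Lemma \ref{L:vertex} the convex hull of $\{\dv L\mid L\in\mc{L}(\delta,M)\}$ is exactly that face, hence every $L\in\mc{L}(\delta,M)$ has $\dv L=\gamma$. In particular $\dv L_{\min}=\dv L_{\max}=\gamma$, so $\dv W=0$, hence $W=0$, hence $L_{\min}=L_{\max}$ and $\mc{L}(\delta,M)$ is the singleton $\{L_{\min}\}$. Thus $M$ has exactly one $\gamma$-dimensional subrepresentation. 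Finally, to conclude $\Gr_\gamma(M)$ is literally a point (not a fat point), I would invoke the standard fact that $\Gr_\gamma(M)$ is a projective scheme whose closed points are the $\gamma$-dimensional subrepresentations; having a unique closed point it is supported at one point, and since we are over $\mb{C}$ with $\chi$ the topological Euler characteristic, $\chi(\Gr_\gamma(M))=1$.

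The main obstacle I anticipate is the bookkeeping in the second paragraph — making precise that membership in $\mc{L}(\delta,M)$ is governed by the vanishing conditions $\hom(n\delta,M/L)=\e(n\delta,L)=0$ of Lemma \ref{L:sub=h}, and that these behave well under the operations $L\mapsto L_{\min}$, $L\mapsto L/L_{\min}$. But in fact, once one observes that \emph{all} elements of $\mc{L}(\delta,M)$ share the dimension vector $\gamma$ (immediate from the vertex hypothesis via Corollary \ref{C:allfaces}), the uniqueness is forced purely by $L_{\min}\subseteq L\subseteq L_{\max}$ with $\dv L_{\min}=\dv L_{\max}$, so this obstacle largely dissolves and the proof is short.
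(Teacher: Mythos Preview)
Your proposal is correct and ultimately follows the same approach as the paper: choose $\delta$ generic in the normal cone at $\gamma$ so that every $L\in\mc{L}(\delta,M)$ has $\dv L=\gamma$, and then the containment $L_{\min}\subseteq L_{\max}$ together with equal dimension vectors forces $L_{\min}=L_{\max}$, giving uniqueness. Your second paragraph is an unnecessary detour (as you yourself note), and the paper's proof is essentially your third paragraph compressed into two lines; your added remark about $\chi(\Gr_\gamma(M))=1$ following from the underlying topological space being a single point is a useful clarification that the paper leaves implicit.
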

\begin{proof} Given $\gamma\in \V(M)$, we knew from Lemma \ref{L:vertex} that there is some weight $\delta$ such that 
	the subrepresentation $t_\dtb(M)$ or $\tc_\dtb(M)$ has dimension $\gamma$.
	By Theorem \ref{T:maxminsub}, such a subrepresentation is unique.
	
\end{proof}

\begin{definition} For any $\gamma\in \V(M)$, we call the unique subrepresentation $L$ with $\dv L = \gamma$ a {\em vertex subrepresentation} of $M$.
	The {\em vertex quotient} representation is defined analogously.
\end{definition}

\begin{problem} For general algebras $A$ what condition should be imposed on $M$ such that
	$\Gr_\gamma(M)$ is a point if and only if $\gamma\in {\V}(M)$?
\end{problem}

\begin{conjecture} \label{conj:onept} For finite-dimensional Jacobian algebras and $M=\Coker(\delta)$,
	$\Gr_\gamma(M)$ is a point if and only if $\gamma\in {\V}(M)$.
\end{conjecture}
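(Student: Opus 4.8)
The implication ``$\gamma\in\V(M)\ \Rightarrow\ \Gr_\gamma(M)$ is a point'' is precisely Theorem \ref{T:onept} and uses nothing beyond what is already established, so the content is the converse, which the plan is to prove by induction on $|Q_0|$. Assume $\Gr_\gamma(M)$ is a single reduced point; then $\gamma\in\N(M)$, and $\gamma$ lies in the relative interior of a unique face ${\sf\Lambda}$ of $\N(M)$. If $\dim{\sf\Lambda}=0$ then $\gamma\in\V(M)$ and we are done, so suppose $\dim{\sf\Lambda}\geq 1$; we must reach a contradiction. When $|Q_0|=1$ the Jacobian algebra is $\mb{C}$ (a quiver with potential has no loops), $M$ is a vector space, $\N(M)$ is the segment $[0,\dim M]$, and $\Gr_\gamma(M)=\Gr(\gamma,\dim M)$ is not a point unless $\gamma\in\{0,\dim M\}$; so we may assume $|Q_0|\geq 2$, and that the conjecture holds for all Jacobian algebras on fewer vertices.

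\emph{Reduction to the interior case.} Suppose ${\sf\Lambda}$ is a proper face, and pick a facet ${\sf F}\supseteq{\sf\Lambda}$; let $\delta$ be a generic vector in the normal cone of ${\sf F}$. For every $L\hookrightarrow M$ with $\dv L=\gamma$ we have $\delta(\dv L)=f_M(\delta)$, so $L\in\mc L(\delta,M)$ and hence $L_{\min}=t_\dtb(M)\subseteq L\subseteq\tc_\dtb(M)=L_{\max}$ by Theorem \ref{T:maxminsub}; conversely, since $\delta(\gamma)=\delta(\dv L_{\min})=f_M(\delta)$, any $\tilde L$ with $L_{\min}\subseteq\tilde L\subseteq L_{\max}$ and $\dv\tilde L=\gamma$ lies in $\mc L(\delta,M)$, and then $\tilde L/L_{\min}$ is $\delta$-semistable by Theorem \ref{T:maxminsub}. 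Sending $L$ to $L/L_{\min}$ therefore defines an isomorphism of varieties $\Gr_\gamma(M)\xrightarrow{\ \sim\ }\Gr_{\gamma-\dv L_{\min}}\bigl(\dtb^\perp(M)\bigr)$, and the target is the corresponding quiver Grassmannian of $\pi_\delta(M)$. By Theorem \ref{T:faces}, ${\sf F}=\dv t_\dtb(M)+\iota_\delta\bigl(\N(\pi_\delta(M))\bigr)$, so $\gamma-\dv L_{\min}$ lies in the relative interior of a face of $\N(\pi_\delta(M))$ of the same positive dimension as ${\sf\Lambda}$; in particular it is not a vertex. Granting that $\pi_\delta(M)$ is again the cokernel of a general presentation over a Jacobian algebra with strictly fewer vertices --- which is what the construction of Section \ref{ss:stablered} should yield for a facet normal, in the spirit of the reduction of quivers with potential \cite{DWZ2} --- the inductive hypothesis forces $\Gr_{\gamma-\dv L_{\min}}(\pi_\delta(M))$ not to be a point, contradicting the isomorphism above. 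So we may assume ${\sf\Lambda}=\N(M)$, i.e. $\gamma$ is strictly interior to $\N(M)$.

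\emph{The interior case.} This is the crux, and the step I expect to be the main obstacle. One must show that $\Gr_\gamma(M)$ is not a point whenever $\gamma$ is interior to $\N(M)$; equivalently, that $\Hom_A(L,M/L)\neq 0$ for every $\gamma$-dimensional $L\hookrightarrow M$ together with reducedness at $L$, or --- more robustly --- that $\Gr_\gamma(M)$ is positive-dimensional. The plan is to start from the universal estimate $\dim_L\Gr_\gamma(M)\geq\innerprod{\gamma,\alpha-\gamma}$, where $\innerprod{-,-}$ is the Euler form of $Q$ (it controls $\Gr_\gamma(M)$ as a subvariety cut out inside $\prod_i\Gr(\gamma_i,\alpha_i)$ by the arrow conditions, independently of the relations), and to use the genericity of $M=\Coker(\delta)$ and the Calabi--Yau-type symmetry of the $\E$-form available for Jacobian algebras to rule out $\innerprod{\gamma,\alpha-\gamma}\leq 0$ for interior $\gamma$ --- or, failing that, to exhibit directly a one-parameter family of distinct $\gamma$-dimensional subrepresentations from a ``mixing'' between a vertex subrepresentation below $\gamma$ and a vertex quotient above $\gamma$. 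I expect a clean treatment of this case to need genuinely new input --- such as the $2$-Calabi--Yau geometry of the associated cluster category or the combinatorics of the attached stability fan --- which is precisely why the statement is posed as a conjecture; the induction above at least reduces the whole conjecture to this single interior situation.
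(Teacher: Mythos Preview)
This statement is a \emph{conjecture} in the paper and is not proved there; only the implication $\gamma\in\V(M)\Rightarrow\Gr_\gamma(M)$ is a point (Theorem~\ref{T:onept}) is established, and the paper explicitly leaves the converse open. So there is no ``paper's own proof'' to compare against, and your write-up is a proposed strategy rather than a proof --- which you acknowledge in your final paragraph.

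That said, the strategy has substantive gaps even before the interior case. In the reduction step you assume three things the paper does not know: (i) that the algebra $A_{\delta,\dtb^\perp(M)}$ is again a finite-dimensional Jacobian algebra; (ii) that it has strictly fewer vertices than $A$ --- the paper poses this as the open Question~\ref{q:v-1} even for general presentations, and gives no argument; and (iii) that $\pi_\delta(M)$ is again the cokernel of a \emph{general} presentation over this new algebra. None of these is obvious, and (ii) in particular is needed for your induction to terminate. There is also a subtlety in your identification $\Gr_\gamma(M)\cong\Gr_{\gamma-\dv L_{\min}}(\pi_\delta(M))$: what the proof of Theorem~\ref{T:faces} gives is an isomorphism $\Gr_\gamma(M)\cong\Gr_{\gamma-\gamma_{\min}}(\dtb^\perp(M))$ together with a stratification \eqref{eq:mstr} into pieces $\Gr_m^\delta(\dtb^\perp(M))\cong\Gr_m(\pi_\delta(M))$ indexed by multiplicity vectors $m$ with $\iota_\delta(m)=\gamma-\gamma_{\min}$. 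This is a single $\Gr_m(\pi_\delta(M))$ only when $\iota_\delta$ is injective on the relevant lattice, i.e.\ when the dimension vectors of the $\delta$-stable factors are linearly independent --- another hypothesis you would need to secure. Finally, the interior case is, as you say, the crux, and you do not supply an argument: the lower bound $\dim_L\Gr_\gamma(M)\geq\innerprod{\gamma,\alpha-\gamma}$ uses the Euler form of $Q$, which for a Jacobian algebra with relations need not be positive on interior $\gamma$, and the suggested Calabi--Yau input is not made precise. In short, the outline isolates where the difficulty lies but does not remove it; the conjecture remains open.
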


\begin{remark} \label{R:onept} The generic assumption is necessary here.
	Consider the $3$-arrow Kronecker quiver $\Kronthree{1}{2}$. Let $L,N$ be two general representations of dimension $(1,1)$.
	Then $\ext(N,L)=1$ so a general representation of dimension $(2,2)$ does not have a subrepresentation of dimension $(1,1)$.
	Let $M$ be the middle term of a nonzero extension $0\to L\to M\to N\to 0$.
	It is easy to check that $L$ is the unique subrepresentation of $M$ of dimension $(1,1)$, which is not in ${\V}(M)=\{(0,0),(0,2),(2,2)\}$.
\end{remark}

\begin{proposition} \label{P:vhoms} $(1).$ If $L$ is a vertex subrepresentation of $M$, then 
	$\Hom(L,M/L)=0$. \\
$(2).$ If $M$ is $\Ec$-rigid, then any $L\subset M$ satisfying $\Hom(L,M/L)=0$ must be $\Ext$-rigid.
\end{proposition}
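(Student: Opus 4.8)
For part (1): The plan is to exploit the uniqueness from Theorem \ref{T:maxminsub}. Since $L$ is a vertex subrepresentation, Lemma \ref{L:vertex} gives a weight $\delta$ with $L = t_\dtb(M) = \tc_\dtb(M)$, so in particular $\mc{L}(\delta,M) = \{L\}$. By Theorem \ref{T:torsion}, $\Hom(L, \fc_\dtb(M)) = \Hom(t_\dtb(M), M/L) = 0$, and here $\fc_\dtb(M) = M/\tc_\dtb(M) = M/L$. This already yields $\Hom(L, M/L) = 0$ directly. (Alternatively, one could argue: any nonzero $\phi\colon L \to M/L$ would let us build a subrepresentation strictly between $L_{\min}$ and $L_{\max}$ via a pullback/graph construction and show it still lies in $\mc{L}(\delta,M)$, contradicting $L_{\min}=L_{\max}$ — but invoking Theorem \ref{T:torsion} is cleaner.) The only point to check carefully is that the $\delta$ chosen in Lemma \ref{L:vertex} indeed forces $L_{\min}=L_{\max}=L$, which is exactly what the proof of that lemma establishes.

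For part (2): Suppose $L\subseteq M$ with $\Hom(L, M/L)=0$, and assume $M$ is $\Ec$-rigid, i.e. $\Ec(M,M)=0$. The goal is $\Ext^1(L,L)=0$. The natural route is to feed the short exact sequence $0\to L\to M\to M/L\to 0$ into the long exact sequences for $\Ec(-,\dc_N)$ (the $\Ec$-analogue of Lemma \ref{L:E}.(1), which the paper notes the reader can formulate), applied appropriately, together with the inequality $\Ec(-,-)\supseteq \Ext^1(-,-)$ (the $\Ec$-analogue of Lemma \ref{L:E}.(2)). From $0\to L\to M\to M/L\to 0$ one gets, for the first variable, a six-term exact sequence linking $\Hom(-, \dc_L)$, $\Hom(M/L, \dc_L)\to\Hom(M,\dc_L)\to\Hom(L,\dc_L)$, then $\Ec(M/L,\dc_L)\to\Ec(M,\dc_L)\to\Ec(L,\dc_L)\to 0$. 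Since $\Ec(M,M)=0$ and $\Ec(M,\dc_L)=\Ec(M,L)$ is a subquotient-type term one must relate to $\Ec(M,M)$: here one uses the long exact sequence in the \emph{second} variable for $0\to L\to M\to M/L\to 0$ to show $\Ec(M,L)$ injects into (or is controlled by) $\Ec(M,M)=0$, hence $\Ec(M,L)=0$, hence $\Ec(L,L)=0$ once we also kill $\Ec(M/L,L)$'s contribution — and that is where $\Hom(L,M/L)=0$ enters, via the connecting map in the first-variable sequence. Since $\Ext^1(L,L)\subseteq \Ec(L,L)=0$, we are done.

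The main obstacle will be bookkeeping in part (2): assembling the two long exact sequences (one in each variable, both coming from $0\to L\to M\to M/L\to 0$) and tracking exactly which $\Hom$ and $\Ec$ terms vanish, so that the hypothesis $\Hom(L,M/L)=0$ plugs into the right connecting homomorphism. I expect one needs the vanishing $\Hom(L,M/L)=0$ precisely to force surjectivity of $\Hom(M,\dc_L)\to\Hom(L,\dc_L)$ (equivalently $\Hom(M,\Ker\dc_L)\to\Hom(L,\Ker\dc_L)$), which then forces the connecting map into $\Ec(M/L,\dc_L)$ to vanish and lets the $\Ec$-rigidity of $M$ propagate down to $L$. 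The $\Hom$-part of the sequences and the inclusion $\Ext^1\subseteq\Ec$ are routine; identifying the correct connecting map is the only delicate step, and everything else is diagram-chasing with the $\Ec$-analogues of Lemma \ref{L:E}.
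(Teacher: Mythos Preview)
Part (1) is correct and matches the paper's argument exactly.

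Part (2) has a genuine gap. You aim for $\Ec(L,L)=0$ and try to reach it by first showing $\Ec(M,L)=0$ via ``the long exact sequence in the second variable.'' But there is no such sequence for $\Ec$: the analogue of Lemma~\ref{L:E}(1) for $\Ec$ is a six-term sequence with a \emph{fixed} injective presentation $\dc$ in the second slot and varying the first argument. Varying the second argument would require functoriality in the minimal injective presentation $\dc_N$, which does not behave exactly under short exact sequences. Your fallback claim that $\Hom(L,M/L)=0$ forces surjectivity of $\Hom(M,L)\to\Hom(L,L)$ is also off; the cokernel of that restriction map is controlled by $\Ext^1(M/L,L)$, not by $\Hom(L,M/L)$.

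The paper's route avoids this entirely: from the $\Ec$-analogue of Lemma~\ref{L:E}(1) (first variable, with $\dc=\dc_M$) one gets $\Ec(M,M)\twoheadrightarrow\Ec(L,M)$, hence $\Ec(L,M)=0$; then the $\Ec$-analogue of Lemma~\ref{L:E}(2) gives $\Ext^1(L,M)=0$. Now the \emph{ordinary} $\Ext$ long exact sequence for $\Hom(L,-)$ applied to $0\to L\to M\to M/L\to 0$ yields
\[
0=\Hom(L,M/L)\to\Ext^1(L,L)\to\Ext^1(L,M)=0,
\]
so $\Ext^1(L,L)=0$. The key move you are missing is to drop from $\Ec$ to $\Ext^1$ \emph{before} trying to vary the second variable.
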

\begin{proof} (1). The fact that $\Hom(L,M/L)=0$ follows from Theorem \ref{T:torsion} and Lemma \ref{L:vertex}.
	
(2). By (1) and (2) of Lemma \ref{L:E} we have that $\check{\E}(L,M)=0$ and thus $\Ext^1(L,M)=0$.
	From the long exact sequence 
	$$0=\Hom(L,M/L) \to \Ext^1(L,L) \to \Ext^1(L,M)=0,$$
	we have that $\Ext^1(L,L)=0$.
\end{proof}

\begin{remark} (1). The dual argument shows that if $M$ is $\E$-rigid, then $M/L$ is $\Ext$-rigid.\\	
(2). We cannot replace the $\Ext$-rigidity by the $\E$ or $\check{\E}$-rigidity in the conclusion of the above lemma.
Consider $M$ as in Example \ref{ex:QPfaces}, which is $\E$-rigid since it is obtained from a sequence of mutations.
	Now $(2,1,2,1)$ is a vertex of $\N(M)$ but it is not hard to check that the corresponding vertex subrepresentation is $\Ext$-rigid but not $\E$-rigid.
	Even worse, by an algorithm in \cite{Ft} there is no general presentation whose cokernel has dimension vector $(2,1,2,1)$. 
\end{remark}

\begin{conjecture} \label{conj:vertex} Suppose that $M$ is $\E$-birigid and $L\hookrightarrow M$. Then $L$ is a vertex subrepresentation if and only if $\Hom(L,M/L)=0$.
\end{conjecture}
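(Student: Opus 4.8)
We sketch a possible approach. The ``only if'' direction requires no hypothesis on $M$ and is exactly Proposition~\ref{P:vhoms}(1); the content of the conjecture is the converse. So assume $M$ is $\E$-birigid and $L\hookrightarrow M$ with $\Hom(L,M/L)=0$, and set $\gamma=\dv L$, $\alpha=\dv M$. By Theorem~\ref{T:onept} it suffices to prove $\gamma\in\V(M)$: then $\Gr_\gamma(M)$ is a single point, which must be $L$, so $L$ is the vertex subrepresentation.

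First I would extract the homological content of the hypotheses, refining the computation in the proof of Proposition~\ref{P:vhoms}(2). The long exact sequence of part~(1) of Lemma~\ref{L:E} (with $d=d_M$), and its dual for $\Ec(-,\dc_M)$, applied to $0\to L\to M\to M/L\to 0$ together with $\E(M,M)=0=\Ec(M,M)$, give $\E(M,M/L)=0$ and $\Ec(L,M)=0$, hence $\Ext^1(M,M/L)=0=\Ext^1(L,M)$ by part~(2) of Lemma~\ref{L:E} and its dual. Feeding these back, together with $\Hom(L,M/L)=0$, into the long exact sequences of $\Hom(L,-)$ and $\Hom(-,M/L)$ for the same short exact sequence yields $\Ext^1(L,L)=0=\Ext^1(M/L,M/L)$ and $\Ec(L,L)=0$; if moreover $A$ is hereditary then $\Ext^2$ vanishes and one also gets $\Ext^1(L,M/L)=0$. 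In the hereditary case this says $\Hom(L,M/L)=\Ext^1(L,M/L)=0$, i.e.\ $\gamma\perp(\alpha-\gamma)$.

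For an acyclic quiver this finishes the proof: $\E$-rigidity of $M$ forces $\Ext^1(M,M)=0$, so $M$ is rigid in the sense of Theorem~\ref{T:acyclicv}, and since the pair $(L,M/L)$ witnesses $\gamma\perp(\alpha-\gamma)$, that theorem gives $\gamma\in\V(M)$. Thus the conjecture holds over path algebras; what remains is the general basic algebra case.

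For general $A$ the plan is to produce a single weight $\delta\in\mb{Z}^{Q_0}$ with $t_\dtb(M)=L$; by Lemma~\ref{L:vertex} this gives $\gamma\in\V(M)$. Since $t_\dtb$ is the torsion radical of the torsion pair $(\mc{T}(\br{\delta}),\mc{F}(\br{\delta}))$ (Theorem~\ref{T:torsion}), the equality $t_\dtb(M)=L$ holds exactly when $L\in\mc{T}(\br{\delta})$ and $M/L\in\mc{F}(\br{\delta})$; by the explicit descriptions of these classes recalled before Theorem~\ref{T:torsion}, this means $\delta(\dv N)>0$ for every nonzero quotient $N$ of $L$ while $\delta(\dv K)\le 0$ for every subrepresentation $K$ of $M/L$. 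Only finitely many dimension vectors $\dv N$, $\dv K$ occur, so by Farkas' lemma such a $\delta$ exists if and only if the (finitely generated, rational) convex cones spanned by the $\dv N$ and by the $\dv K$ meet only at the origin. I expect this last, purely combinatorial, disjointness to be the main obstacle, and the reason the statement is still a conjecture: were it to fail, we would obtain a nonzero representation $P$, a direct sum of quotients of $L$, and a representation $S$, a direct sum of subrepresentations of $M/L$, with $\dv P=\dv S$ and $\Hom(P,S)=0$; ruling this out seems to need more than the general-presentation machinery of Section~\ref{S:Ftrop} supplies directly, precisely because $L$ and $M/L$ need only be $\Ext^1$-rigid, not $\E$-rigid (compare the Remark after Proposition~\ref{P:vhoms}). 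A natural attack is to bring $\Ec(L,M)=0=\E(M,M/L)$ to bear through part~(3) of Lemma~\ref{L:E}, i.e.\ through $\Hom(-,\nu d)$, to bound such $P$ and $S$; whether this is enough is the open point.
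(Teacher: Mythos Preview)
This statement is a \emph{conjecture} in the paper; the paper offers no proof, only the forward implication via Proposition~\ref{P:vhoms}(1) and the supporting evidence of Theorem~\ref{T:acyclicv}. You correctly recognize this and honestly flag the general case as open.

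Your argument in the hereditary case is sound and is in fact a genuine addition not spelled out in the paper: from $\E(M,M)=0$ and $\Ec(M,M)=0$ you deduce $\Ext^1(M,M/L)=0=\Ext^1(L,M)$ via Lemma~\ref{L:E}; then the two long exact sequences for $\Hom(L,-)$ and $\Hom(-,M/L)$, together with $\Ext^2=0$, give $\Ext^1(L,M/L)=0$. Combined with the hypothesis $\Hom(L,M/L)=0$ this witnesses $\gamma\perp(\alpha-\gamma)$, and Theorem~\ref{T:acyclicv} (plus Theorem~\ref{T:onept}) finishes. One small quibble: your claim $\Ec(L,L)=0$ is not obviously justified by the long exact sequences you invoke (those control $\Ec(-,\dc_M)$, not $\Ec(-,\dc_L)$), but you never use it, so the argument stands.

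Your plan for general $A$---reduce to finding $\delta$ with $L\in\mc{T}(\br\delta)$ and $M/L\in\mc{F}(\br\delta)$, then invoke Farkas---is the natural one, and your identification of the obstruction (that the cones of quotient-dimension-vectors of $L$ and sub-dimension-vectors of $M/L$ might meet nontrivially) is exactly right. As you say, Remark~\ref{R:onept} and the remark after Proposition~\ref{P:vhoms} show that $L$ and $M/L$ need not be $\E$-rigid, so the general-presentation tools of Section~\ref{S:Ftrop} do not apply directly; this is why the statement remains a conjecture.
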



Now we focus on the setting of acyclic quivers. We remind readers that in this setting the notions of $\Ext$-rigidity, $\E$-rigidity and $\Ec$-rigidity all coincide.
Unless otherwise stated, we always set $\beta = \alpha-\gamma$ below.
We recall the notation $\hom(\gamma,\beta)$ and $\ext(\gamma,\beta)$ in Remark \ref{r:HE0}. 
We will write $\gamma\perp \beta$ if $\hom(\gamma,\beta)=\ext(\gamma,\beta)=0$.

\begin{lemma}[{\cite[Lemma 3.2, Theorem 3.3]{S2}}] \label{L:subrep} {\ }\begin{enumerate}
		\item A	general representation of dimension $\beta+\gamma$ contains a subrepresentation of dimension $\gamma$ if and only if $\ext(\gamma,\beta)=0$.
		\item The tangent space of $\Gr_\gamma(M)$ (as a scheme) at $L$ is isomorphic to $\Hom(L,M/L)$.
	\end{enumerate}
\end{lemma}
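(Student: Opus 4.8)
The plan is to establish (2) first by a first-order computation on the Grassmannian, and then to deduce (1) from (2) by an incidence-variety dimension count, following Schofield. For (2): I would realize $\Gr_\gamma(M)$ (with $\alpha=\dv M$) as the closed subscheme of $\prod_{i\in Q_0}\Gr(\gamma_i,\alpha_i)$ cut out, for each arrow $a\colon i\to j$, by the vanishing of the section $s_a$ of $\Hom(\mc U_i,\mc Q_j)$ given by the composite $\mc U_i\hookrightarrow \mb C^{\alpha_i}\otimes\mc O\xrightarrow{M_a}\mb C^{\alpha_j}\otimes\mc O\twoheadrightarrow \mc Q_j$, where $\mc U_i$ and $\mc Q_i$ are the tautological sub- and quotient bundles on $\Gr(\gamma_i,\alpha_i)$. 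At a point $L=(L_i)_i$ of this subscheme the tangent space of the ambient product is $\bigoplus_i\Hom(L_i,M_i/L_i)$, and I would compute that the differential of $(s_a)_a$ sends a tuple $(\phi_i)_i$ to $\bigl(N_a\phi_i-\phi_jL_a\bigr)_{a\colon i\to j}$, where $N=M/L$, $L_a=M_a|_{L_i}$, and $N_a$ is the induced map on quotients. Its kernel is exactly the set of tuples $(\phi_i)_i$ defining a morphism of representations $L\to N$; that is, $T_L\Gr_\gamma(M)=\Hom(L,M/L)$.

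For (1), write $\alpha=\gamma+\beta$ and form the incidence variety $\mathbb Z=\{(M,U)\in\rep_\alpha(Q)\times\prod_i\Gr(\gamma_i,\alpha_i)\mid M_a(U_i)\subseteq U_j\ \forall a\colon i\to j\}$. Projecting to the product of Grassmannians exhibits $\mathbb Z$ as a vector bundle, so $\mathbb Z$ is smooth and irreducible, and a count of ranks (the fibre over $U$ consists of those $M$ with $M_a(U_i)\subseteq U_j$) gives $\dim\mathbb Z-\dim\rep_\alpha(Q)=\langle\gamma,\beta\rangle$, the Euler form of $Q$. Let $\pi\colon\mathbb Z\to\rep_\alpha(Q)$ be the other projection; its fibre over $M$ is $\Gr_\gamma(M)$, so a general representation of dimension $\alpha$ has a $\gamma$-dimensional subrepresentation iff $\pi$ is dominant, and I must show this is equivalent to $\ext(\gamma,\beta)=0$. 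If $\ext(\gamma,\beta)=0$, pick $L\in\rep_\gamma(Q)$, $N\in\rep_\beta(Q)$ with $\Ext(L,N)=0$; then $(L\oplus N,\,L\oplus 0)\in\mathbb Z$, and by (2) the fibre $\Gr_\gamma(L\oplus N)$ has tangent space $\Hom(L,N)$ there, of dimension $\hom(L,N)=\langle\gamma,\beta\rangle$, so its local dimension at this point is $\le\langle\gamma,\beta\rangle$; comparing with the fibre-dimension inequality $\dim\mathbb Z-\dim\overline{\pi(\mathbb Z)}\le\dim\Gr_\gamma(L\oplus N)$ at that point forces $\dim\overline{\pi(\mathbb Z)}\ge\dim\mathbb Z-\langle\gamma,\beta\rangle=\dim\rep_\alpha(Q)$, hence $\pi$ is dominant. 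Conversely, if $\pi$ is dominant then (working over $\mb C$) it is generically smooth, so for general $M$ the fibre $\Gr_\gamma(M)$ is smooth of dimension $\dim\mathbb Z-\dim\rep_\alpha(Q)=\langle\gamma,\beta\rangle$; taking a general $L\in\Gr_\gamma(M)$, which is then a smooth point, (2) gives $\hom(L,M/L)=\langle\gamma,\beta\rangle=\hom(L,M/L)-\ext(L,M/L)$, whence $\Ext(L,M/L)=0$, and since $\ext(\gamma,\beta)$ is by definition a minimum over all representations of the relevant dimension vectors, $\ext(\gamma,\beta)\le\ext(L,M/L)=0$.

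I expect the main obstacle to be the interface between the scheme structure and actual dimensions: part (2) only controls the Zariski tangent space, so in the forward implication of (1) I must invoke generic smoothness (valid in characteristic $0$) to guarantee that a general point of a general fibre of $\pi$ is reduced, so that there the tangent-space dimension equals the true fibre dimension $\langle\gamma,\beta\rangle$; and in the backward implication I must use that the tangent-space estimate gives only an \emph{upper} bound on the local fibre dimension, which is precisely what combines with the \emph{lower} fibre-dimension inequality. The rank bookkeeping for $\dim\mathbb Z$ and the explicit differential of $(s_a)_a$ are the routine-but-must-be-exact computations, and the only other point to handle with care is the irreducibility of $\mathbb Z$ and of $\rep_\alpha(Q)$, needed to turn the dimension comparisons into the statement $\overline{\pi(\mathbb Z)}=\rep_\alpha(Q)$.
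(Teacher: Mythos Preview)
The paper does not supply its own proof of this lemma; it is quoted as \cite[Lemma~3.2, Theorem~3.3]{S2} and used as a black box (the only comment following it is the remark that $\Gr_\gamma(M)$ is generically reduced by Schofield's construction and Bertini). Your argument is a faithful reconstruction of Schofield's original proof: the incidence variety $\mathbb Z$, its realization as a vector bundle over the product of Grassmannians, the dimension count $\dim\mathbb Z-\dim\rep_\alpha(Q)=\langle\gamma,\beta\rangle$, and the two-way comparison via generic smoothness and the tangent-space computation are exactly the ingredients in \cite{S2}. The proof is correct; the points you flag as delicate (generic smoothness in characteristic~$0$, upper versus lower fibre-dimension bounds, irreducibility of $\mathbb Z$) are the right ones and you handle them properly.
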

\noindent By the construction in \cite[Section 3]{S2} and Bertini's theorem,  $\Gr_\gamma(M)$ is generically reduced. 
For the rest part of this section, we will only deal with general representations, so part (2) will always be applicable.

\begin{lemma} \label{L:chipos} Let $M$ be a rigid representation of an acyclic quiver. Then $\chi(\Gr_{\gamma}(M))>0$ if $\Gr_{\gamma}(M)$ is non-empty.
\end{lemma}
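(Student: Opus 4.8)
The plan is to establish positivity of the Euler characteristic $\chi(\Gr_\gamma(M))$ for rigid $M$ over an acyclic quiver by exhibiting a torus action on $\Gr_\gamma(M)$ whose fixed locus is itself a non-empty, and in fact smooth, projective variety with non-negative Euler characteristic, and then bootstrap down to a point. First I would choose a generic one-parameter subgroup $\lambda$ of a maximal torus in $\Aut(M)$; since $M$ is rigid, it is a direct sum of pairwise non-isomorphic indecomposable rigid (hence brick) representations $M = \bigoplus_k M_k$, so $\Aut(M)$ contains the torus $T = (\mb{C}^*)^r$ acting by scaling each $M_k$. This torus acts on $\Gr_\gamma(M)$, and as in the proof of Lemma \ref{L:directsum}, any fixed point of $T$ is a subrepresentation that splits compatibly with the decomposition, i.e. $L = \bigoplus_k L_k$ with $L_k \subseteq M_k$. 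Because $\chi(\Gr_\gamma(M)) = \chi(\Gr_\gamma(M)^T)$ for any torus action on a complete variety, I can reduce the computation to the fixed locus.

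The key point is then that $\Gr_\gamma(M)^T = \bigsqcup_{\gamma_1 + \cdots + \gamma_r = \gamma} \prod_k \Gr_{\gamma_k}(M_k)$, so $\chi(\Gr_\gamma(M)) = \sum \prod_k \chi(\Gr_{\gamma_k}(M_k))$ where each $M_k$ is an indecomposable rigid (real Schur root) representation. Next I would observe that it suffices to prove the statement for $M$ indecomposable rigid, and proceed by induction on $\dim M$ (or on the number of positive roots below $\dv M$). For indecomposable rigid $M$ over an acyclic quiver, $\Gr_{\gamma}(M)$ is smooth of expected dimension by Lemma \ref{L:subrep}.(2) together with the rigidity of $M$ forcing $\ext$ to vanish at the relevant subrepresentations — this is where I would invoke that the tangent space is $\Hom(L, M/L)$ and that for general $M$ this is controlled by $\hom(\gamma, \beta) - \ext(\gamma,\beta)$, which, using rigidity and Schofield's results, is governed cleanly. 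Having smoothness and (via the genericity/Bertini remark after Lemma \ref{L:subrep}) irreducibility of each component, I would either cite a known cell-decomposition result for quiver Grassmannians of rigid representations (they admit a cellular decomposition, hence non-negative — indeed positive when non-empty — Euler characteristic), or run the torus-localization argument one more level down to reduce $M_k$ to a still-smaller rigid representation, terminating at simples where $\Gr_\gamma(S_i)$ is a point.

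The cleanest self-contained route, and the one I would write up, is the inductive torus-localization: given indecomposable rigid $M$, if $M$ is not simple pick a proper nonzero rigid subrepresentation or use a non-generic one-parameter subgroup $\lambda: \mb{C}^* \to \deltaL(\dv M)$ compatible with a filtration of $M$; the limit $\lim_{t\to 0}\lambda(t)\cdot M$ degenerates $M$ to $M' = \operatorname{gr} M$, but rather than degenerating $M$ I would act on $\Gr_\gamma(M)$ directly by a $\mb{C}^*$ coming from $\End(M)$ — here rigidity gives $\End(M) = \mb{C}$ for $M$ a brick, so this fails, and I must instead use the full-ambient-torus trick. Therefore the actual induction is on the decomposition: $\chi(\Gr_\gamma(M))$ is a sum over the fixed components, and I reduce to proving $\chi(\Gr_{\gamma_k}(M_k)) > 0$ whenever non-empty for $M_k$ indecomposable rigid; for these I would cite that quiver Grassmannians of rigid representations over acyclic (more generally hereditary) quivers have a cell decomposition into affine spaces — a result available in the literature on cluster categories — hence their Euler characteristic equals the number of cells, which is $\geq 1$ exactly when the Grassmannian is non-empty. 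The main obstacle is making the cellularity (or equivalently, the positivity) input precise without an excessively long detour: if the paper wants to stay elementary, the fallback is to invoke that $\Gr_\gamma(M)$ for general $M$ is smooth and irreducible (from Lemma \ref{L:subrep} plus the Bertini remark) and that a smooth irreducible complete variety with a torus action whose fixed locus is iteratively a product of such varieties down to points must have strictly positive Euler characteristic by descending induction — the base case being $\Gr_\gamma(S_i) = \mathrm{pt}$.
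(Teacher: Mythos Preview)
Your approach differs substantially from the paper's. The paper's proof is a two-line citation: by the quantum positivity results of Qin \cite{Q} and \cite{Fc1}, the Poincar\'e polynomial of $\Gr_\gamma(M)$ has nonnegative coefficients, and evaluating at $1$ gives the Euler characteristic, which is therefore positive whenever the variety is non-empty. No torus localization, no reduction to indecomposables, no induction.

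Your torus-localization reduction to indecomposable summands is correct in spirit (though note a minor slip: rigidity of $M$ does \emph{not} force the indecomposable summands to be pairwise non-isomorphic --- $2P_i$ is rigid --- but the torus action on $\Gr_\gamma(\bigoplus M_k)$ works regardless, as in Lemma~\ref{L:directsum}). However, this reduction buys you nothing: whatever external result you cite for the indecomposable brick $M_k$ (cellularity, or positivity of the Poincar\'e polynomial) already applies to arbitrary rigid $M$, so the detour is unnecessary.

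More seriously, your ``elementary fallback'' has a genuine gap, which you yourself identify but do not resolve: for an indecomposable rigid representation $M_k$ over an acyclic quiver, $\End(M_k)=\mb{C}$, so $\Aut(M_k)=\mb{C}^*$ acts trivially on $\Gr_\gamma(M_k)$ and there is no further torus to localize with. Your induction therefore cannot reach the base case of simples without an external positivity input --- which is exactly what the paper cites directly. The phrase ``full-ambient-torus trick'' does not name an actual mechanism here: a torus in $\deltaL(\dv M_k)$ that is not contained in $\Aut(M_k)$ does not act on $\Gr_\gamma(M_k)$. Also, be cautious about citing cellularity of quiver Grassmannians of rigid representations over arbitrary acyclic quivers as a black box; what is cleanly available in the literature (and what the paper uses) is the nonnegativity of the Poincar\'e polynomial via quantum cluster positivity, which is all that is needed.
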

\begin{proof} The quantum positivity (\cite[Theorem 3.2.6]{Q}, \cite[Corollary 5.2]{Fc1}) implies that the Poincar\'{e} polynomial $p$ of $\Gr_\gamma(M)$ is a polynomial with nonnegative coefficients. The evaluation of $p$ at $1$ gives the Euler characteristic of $\Gr_\gamma(M)$.
So $\chi(\Gr_{\gamma}(M))>0$ if $\Gr_{\gamma}(M)$ is non-empty.
\end{proof}

\begin{theorem} \label{T:acyclicv}  Let $M$ be an $\alpha$-dimensional rigid representation of an acyclic quiver. Then $\gamma$ is a vertex of ${\N}(M)$ if and only if $\gamma \perp \beta$.
\end{theorem}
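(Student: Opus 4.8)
The plan is to prove both implications by relating the vertex condition to the tropical/homological vanishing statements already in place, using the fact that for a general (hence here rigid) representation of an acyclic quiver the cones $\sf\Delta_0(M)$ and $\sf\Delta_1(M)$ of Corollary~\ref{C:HE0} are governed by $\hom(-,\alpha)$ and $\ext(-,\alpha)$ via the Euler form (Remark~\ref{r:HE0}), and that by Schofield's result (Lemma~\ref{L:subrep}.(1)) a general $\alpha$-dimensional $M$ has a subrepresentation of dimension $\gamma$ exactly when $\ext(\gamma,\beta)=0$ with $\beta=\alpha-\gamma$.

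For the ``if'' direction, suppose $\gamma\perp\beta$, i.e.\ $\hom(\gamma,\beta)=\ext(\gamma,\beta)=0$. First $\ext(\gamma,\beta)=0$ gives an actual subrepresentation $L\hookrightarrow M$ with $\dv L=\gamma$, so $\gamma\in\N(M)$. To see it is a \emph{vertex}, I would produce a weight $\delta$ for which $\gamma$ is the unique maximizer of $\delta(-)$ on $\N(M)$; by Lemma~\ref{L:vertex} it suffices to show $\dv t_\dtb(M)=\gamma$ (or the dual). The natural choice is $\delta=\innerprod{\gamma,-}-\innerprod{-,\beta}$ (or a suitable combination), engineered so that $\delta(\dv L')\le 0$ with equality forcing $L'$ to have the right Hom/Ext-orthogonality to both $\gamma$ and $\beta$; the conditions $\hom(\gamma,\beta)=\ext(\gamma,\beta)=0$ then pin the maximizing subrepresentation down to $L$ itself, using that $L$ is rigid (Proposition~\ref{P:vhoms}-type reasoning: $\Hom(L,M/L)=0$ follows since $\hom(\gamma,\beta)=0$, and $L$ rigid since $M$ is). Then Theorem~\ref{T:onept}/\ref{T:maxminsub} makes $L$ the unique such subrepresentation and $\gamma$ a vertex.

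For the ``only if'' direction, let $\gamma\in\V(M)$. By Lemma~\ref{L:vertex} there is a $\delta$ with, say, $t_\dtb(M)=L$ of dimension $\gamma$; Proposition~\ref{P:vhoms}.(1) gives $\Hom(L,M/L)=0$, and since $M$ is rigid of an acyclic quiver, $L$ is rigid (Proposition~\ref{P:vhoms}.(2), using that $\Ext$-, $\E$- and $\Ec$-rigidity coincide here) and likewise $M/L$ is rigid (Remark after \ref{P:vhoms}). Now I need $\hom(\gamma,\beta)=\ext(\gamma,\beta)=0$ as \emph{generic} quantities. Generic vanishing of $\hom(\gamma,\beta)$ should follow from $\Hom(L,M/L)=0$ together with genericity of $M$ (so $L$ is a general rep of dimension $\gamma$ and $M/L$ a general rep of dimension $\beta$ — this needs the standard fact that for general $M$ the subrepresentation $t_\dtb(M)$ and quotient $f$-part are themselves general in their dimension vectors, which I would extract from Schofield's construction cited after Lemma~\ref{L:subrep}). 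For $\ext(\gamma,\beta)=0$: since $L\hookrightarrow M$ with $M$ general, Lemma~\ref{L:subrep}.(1) gives $\ext(\gamma,\beta)=0$ directly. Finally I should also record the reverse-orientation statement (via $\check t_\dtb$ and Remark~\ref{R:dual}) so that the argument covers the case $\gamma=\dv\check t_\dtb(M)$, where one instead gets $\Hom(L,M/L)=0$ from the quotient side.

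\textbf{Main obstacle.} The delicate point is the genericity bookkeeping in the ``only if'' direction: upgrading the concrete vanishing $\Hom(L,M/L)=0$ for \emph{this particular} general $M$ to the statement $\hom(\gamma,\beta)=0$ about \emph{generic} representations of dimensions $\gamma$ and $\beta$. One must argue that when $M$ is general of dimension $\alpha$, a vertex subrepresentation $L$ is general of its dimension $\gamma$ and the quotient is general of dimension $\beta$ — plausibly by semicontinuity and the explicit parametrization of Schofield, but it requires care because $L$ is singled out by a stability condition, not chosen generically inside $\Gr_\gamma(M)$. (One can lean on Lemma~\ref{L:subrep}.(2): $\Gr_\gamma(M)$ is smooth at $L$ of dimension $\hom(L,M/L)=0$, so $L$ is an isolated reduced point, which combined with $\Gr_\gamma$ being generically reduced of the expected dimension for general $M$ forces the generic behaviour.) The ``if'' direction is comparatively routine once the right $\delta$ is written down, though choosing $\delta$ so that \emph{only} $L$ maximizes — not merely that $L$ is among the maximizers — is where Proposition~\ref{P:vhoms} and the rigidity of $L$ do the real work.
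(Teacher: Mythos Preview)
Your outline for the ``only if'' direction is essentially what the paper does (Proposition~\ref{P:vhoms}.(1) plus Lemma~\ref{L:subrep}.(1)), but the difficulty you flag as the \emph{main obstacle} is not one: recall that $\hom(\gamma,\beta)$ is defined as the \emph{minimum} of $\hom(L,N)$ over all $L\in\rep_\gamma(Q)$, $N\in\rep_\beta(Q)$. Hence $\Hom(L,M/L)=0$ for this single pair already gives $\hom(\gamma,\beta)=0$; no argument that $L$ and $M/L$ are themselves general is needed. Likewise $\ext(\gamma,\beta)=0$ is immediate from Lemma~\ref{L:subrep}.(1) since $M$ is rigid, hence general. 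So this direction is one line, not the hard part.

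The real content is the ``if'' direction, and here your sketch is too vague to count as a proof. Writing down ``$\delta=\innerprod{\gamma,-}-\innerprod{-,\beta}$ or a suitable combination'' and asserting that the maximizer is pinned down does not do the work; you must actually verify that for your chosen $\delta$ every $L'\subset M$ with $\delta(\dv L')=f_M(\delta)$ contains $L$ (or is contained in it). The paper's second proof carries out exactly your strategy with the simpler choice $\delta=\innerprod{\gamma,-}$: using that one may take $L$ general in $\rep_\gamma(Q)$ and $M/L$ general in $\rep_\beta(Q)$ (this is where genericity \emph{is} needed, citing \cite[Lemma~7.13]{Ft}), one gets $\hom(\delta,M)=\hom(L,L)$ and $\Ext(L,M)=0$; then for any $L'\subset M$ the long exact sequence for $\Hom(L,-)$ applied to $0\to L'\to M\to M/L'\to 0$ yields $\innerprod{\gamma,\dv L'}=\hom(L,L)-\hom(L,M/L')$, so either this is strictly below $\hom(\delta,M)$ or $\Hom(L,M/L')=0$, forcing $L\subseteq L'$. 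The paper also offers a quite different first proof avoiding the weight construction altogether: if $\gamma\perp\beta$ but $\gamma$ were not a vertex, then $n\gamma$ is a nontrivial positive combination of vertices for some $n$, whence $\chi(\Gr_{n\gamma}(nM))>1$ by $F_{nM}=F_M^n$ and positivity (Lemma~\ref{L:chipos}); but rigidity of $\gamma$ or $\beta$ forces $n\gamma\circ n\beta=1$ in the sense of \cite{DW2}, i.e.\ $\Gr_{n\gamma}(nM)$ is a single point --- a contradiction.
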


\begin{proof} Due to Proposition \ref{P:vhoms} and Lemma \ref{L:subrep}.(1) we remain to show $\Leftarrow$. 
Suppose that $\gamma \perp \beta$ (so $\gamma\in\N(M)$ by Lemma \ref{L:subrep}.(1)) but $\gamma$ is not a vertex.
Let $\V(M) = \{\gamma_i\}_i$. Since $\gamma\in \N(M)$, there is some $n\in \mb{N}$ such that $n\gamma$ is a positive integral combination of vertices, say $n\gamma= \sum_i n_i \gamma_i$ ($n=\sum_i n_i$). 
By Lemma \ref{L:directsum} $F_{nM} = F_M^n$. So by Lemma \ref{L:chipos} $\chi(\Gr_{n\gamma} (nM))$ must be greater or equal to a multinomial coefficient, which is greater than $1$.	
Note that $nM$ is rigid since $M$ is rigid.
		
By Proposition \ref{P:vhoms} there is a rigid representation of dimension $\gamma$ or $\beta$.
Then a general representation of dimension $n\gamma$ or $n\beta$ is rigid.
It is well-known (e.g. \cite[Lemma 4.2]{DW2}) that in this case $n\gamma \circ n\beta = 1$ (see \cite[Definition 2.5]{DW2} for the meaning of $\gamma \circ \beta$).
By \cite[Theorem 2.10]{DW2} this is equivalent to that $\Gr_{n\gamma}(nM)$ is a point. A contradiction.
\end{proof}

Below we provide another proof based on the functor $t_\dtb$.
\begin{proof} Due to Proposition \ref{P:vhoms} and Lemma \ref{L:subrep}.(1) we remain to show $\Leftarrow$. 
	Since $\ext(\gamma,\beta)=0$, a general representation $M\in \rep_\alpha(Q)$ has a subrepresentation $L\in \rep_\gamma(Q)$.
	Moreover we can assume that $L$ is general in $\rep_\gamma(Q)$ and $N=M/L$ is general in $\rep_\beta(Q)$ (\cite[Lemma 7.13]{Ft}).
	From the exact sequence	$0\to \Hom(L,L) \to \Hom(L,M) \to \Hom(L,N)=0$
	we get $\hom(L,L) = \hom(L,M)$. Moreover, we have that $\Ext(L,M)=0$.
	
	We choose the weight $\delta = \innerprod{\gamma,-}$. 
	Then $\hom(\delta,L) = \hom(L,L)$.	
	We will show $\dv t_\delta(M) = \gamma$ so that $\gamma$ is a vertex by Lemma \ref{L:vertex}.	
	Suppose that there is another exact sequence $0\to L' \to M \to N'\to 0$. Let $\gamma'=\dv L'$.	
	According to the definition of the functor $t_\delta$, it suffices to show that either $\innerprod{\gamma,\gamma'}=\delta(\gamma')<\hom(\delta,L)$ or $L' \supsetneq L$.
	From the long exact sequence
	$$0\to \Hom(L,L') \to \Hom(L,M) \to \Hom(L,N') \to \Ext(L,L') \to \Ext(L,M)=0,$$
	we get $\innerprod{\gamma,\gamma'}=\hom(L,M) - \hom(L,N') = \hom(L,L) - \hom(L,N')$.
	If $\hom(L,N')>0$, then $\innerprod{\gamma,\gamma'} < \hom(L,L)$.
	If $\hom(L,N')=0$, then $L'$ must contain $L$ because otherwise the composition $L\hookrightarrow M \twoheadrightarrow N'$ is nonzero.
\end{proof}

\begin{remark} \label{R:acyclicv} The theorem is wrong without the rigidity assumption.
	Consider the $3$-arrow Kronecker quiver $\Kronthree{1}{2}$. Let $\alpha=(2,3)$ and $\gamma=(1,2)$.
	Let $M$ be a general representation of dimension $\alpha$.
	Then $\N(M)$ is the convex hull of $(0,0),(0,3),(2,3)$ so $\gamma$ is not a vertex.
	But it is easy to check that $(1,2) \perp (1,1)$.
\end{remark}

\begin{corollary} \label{C:Fvertex} Let $M$ be a rigid representation of an acyclic quiver. 
	Then $\gamma$ is a vertex of $\N(M)$ if and only if $\chi_\gamma(\Gr_\gamma(M))=1$.
\end{corollary}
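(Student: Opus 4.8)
The plan is to deduce Corollary \ref{C:Fvertex} directly from Theorem \ref{T:acyclicv} together with Lemma \ref{L:chipos} and the one-point statement Theorem \ref{T:onept}. The forward direction is already essentially in hand: if $\gamma$ is a vertex of $\N(M)$, then by Theorem \ref{T:onept} the Grassmannian $\Gr_\gamma(M)$ is a single reduced point (reducedness coming from Lemma \ref{L:subrep}.(2), since $M$ is general and $\Hom(L,M/L)=0$ for the vertex subrepresentation $L$ by Proposition \ref{P:vhoms}.(1)), hence $\chi(\Gr_\gamma(M))=1$. So the content is in the converse.

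For the converse, suppose $\chi(\Gr_\gamma(M))=1$. First I would note that this forces $\Gr_\gamma(M)$ to be nonempty, so $\gamma\in\N(M)$; by Lemma \ref{L:subrep}.(1) this gives $\ext(\gamma,\beta)=0$. It then remains to show $\hom(\gamma,\beta)=0$, for then $\gamma\perp\beta$ and Theorem \ref{T:acyclicv} finishes the job. I would argue by contradiction: if $\gamma$ is not a vertex, then as in the first proof of Theorem \ref{T:acyclicv}, write $n\gamma=\sum_i n_i\gamma_i$ as a positive combination of the vertices $\gamma_i\in\V(M)$ with $n=\sum_i n_i\geq 2$ (here $n\geq 2$ because $\gamma$ is not itself a vertex). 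Since $nM$ is rigid and $F_{nM}=F_M^n$ by Lemma \ref{L:directsum}, the coefficient $\chi(\Gr_{n\gamma}(nM))$ is at least the multinomial coefficient $\binom{n}{n_1,\dots}>1$, using Lemma \ref{L:chipos} to ensure no cancellation among the nonnegative contributions. On the other hand, I would like to relate $\chi(\Gr_{n\gamma}(nM))$ back to $\chi(\Gr_\gamma(M))=1$: the torus-action argument of Lemma \ref{L:directsum} shows that the fixed points of $\Gr_{n\gamma}(nM)$ are exactly tuples of subrepresentations of the $n$ copies, so a short computation with the Euler characteristics of the factors shows $\chi(\Gr_{n\gamma}(nM))$ would be controlled by the $\chi(\Gr_{\gamma'}(M))$ with $\gamma'$ summing appropriately to $n\gamma$. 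The cleanest route, though, is simply: the hypothesis $\chi(\Gr_\gamma(M))=1$ plus nonnegativity (Lemma \ref{L:chipos}) says $c_\gamma=1$ in $F_M$, and if $\gamma$ were not a vertex it would lie on a face, and I would derive a contradiction with the vertex characterization rather than recompute; concretely, apply the already-proven Theorem \ref{T:acyclicv} contrapositively — if $\gamma$ is not a vertex, then $\gamma\not\perp\beta$, i.e. $\hom(\gamma,\beta)>0$ (we already have $\ext(\gamma,\beta)=0$), and I must show this makes $\chi(\Gr_\gamma(M))>1$.

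So the heart of the matter is: \emph{for $M$ a general rigid representation of an acyclic quiver with $\ext(\gamma,\beta)=0$ but $\hom(\gamma,\beta)>0$, one has $\chi(\Gr_\gamma(M))>1$}. I would prove this by the same $\circ$-invariant machinery used in the first proof of Theorem \ref{T:acyclicv}: a general $M\in\rep_\alpha(Q)$ has a general subrepresentation $L\in\rep_\gamma(Q)$ with general quotient $N\in\rep_\beta(Q)$ by \cite[Lemma 7.13]{Ft}, and since $\ext(\gamma,\beta)=0$ but $\hom(\gamma,\beta)>0$, the Schofield invariant satisfies $\gamma\circ\beta>1$; by \cite[Theorem 2.10]{DW2}, $\gamma\circ\beta=\chi(\Gr_\gamma(M))$ (for $M$ general), so $\chi(\Gr_\gamma(M))>1$. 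Combined with the forward direction and Theorem \ref{T:acyclicv}, this gives the equivalence.

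The main obstacle I anticipate is making the passage ``$\gamma$ not a vertex $\Rightarrow\chi(\Gr_\gamma(M))>1$'' fully rigorous without circularity: one must be careful that Theorem \ref{T:acyclicv} tells us $\gamma$ not a vertex is equivalent to $\gamma\not\perp\beta$, and then — given we have already forced $\ext(\gamma,\beta)=0$ from nonemptiness — that the remaining failure $\hom(\gamma,\beta)>0$ genuinely forces the Euler characteristic above $1$ via the Schofield/Derksen–Weyman formula $\chi(\Gr_\gamma(M))=\gamma\circ\beta$. A secondary point to check is that $\chi(\Gr_\gamma(M))=1$ does imply $\gamma\in\N(M)$ and hence nonemptiness, which is immediate since an empty variety has Euler characteristic $0\neq 1$. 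Once these two points are pinned down, the corollary follows in a few lines.
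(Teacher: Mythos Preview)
Your forward direction is fine and matches the paper. The converse, however, has a genuine gap: neither of the two routes you sketch actually closes.

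The multinomial detour only bounds $\chi(\Gr_{n\gamma}(nM))$ from below, not $\chi(\Gr_\gamma(M))$ itself. There is no contradiction between $\chi(\Gr_\gamma(M))=1$ and $\chi(\Gr_{n\gamma}(nM))>1$: the coefficient of $\b{y}^{n\gamma}$ in $F_M^n$ receives contributions from many tuples $(\gamma^{(1)},\dots,\gamma^{(n)})$ with $\sum_j \gamma^{(j)}=n\gamma$, not only from the diagonal one, so this line does not feed back to $\chi(\Gr_\gamma(M))$.

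More seriously, your appeal to \cite[Theorem~2.10]{DW2} for the identity $\gamma\circ\beta=\chi(\Gr_\gamma(M))$ is not what that result says. In this paper it is invoked only in the form ``$\gamma\circ\beta=1$ is equivalent to $\Gr_\gamma(M)$ being a point''; the number $\gamma\circ\beta$ is a dimension of a weight space of semi-invariants, and there is no general identification with the Euler characteristic of the quiver Grassmannian, especially when the latter is positive-dimensional. You also have not justified the claim $\gamma\circ\beta>1$ under the sole hypothesis $\hom(\gamma,\beta)>0$. So the ``cleanest route'' is currently unsupported at its key step.

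The paper's argument for the converse is different and much shorter, and it is worth knowing. Since $M$ is rigid, $\Gr_\gamma(M)$ is \emph{smooth} (this is classical, e.g.\ \cite{S2}). Quantum positivity (as in the proof of Lemma~\ref{L:chipos}) says the Poincar\'e polynomial of $\Gr_\gamma(M)$ has nonnegative coefficients and evaluates at $1$ to $\chi(\Gr_\gamma(M))=1$; hence only the zeroth homology can be nonzero. Poincar\'e duality on a smooth projective variety then forces $\dim\Gr_\gamma(M)=0$. By Lemma~\ref{L:subrep}.(2) the tangent space $\Hom(L,M/L)$ vanishes, so $\hom(\gamma,\beta)=0$; together with $\ext(\gamma,\beta)=0$ (from nonemptiness) this gives $\gamma\perp\beta$, and Theorem~\ref{T:acyclicv} finishes. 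The missing idea in your attempt is precisely this use of smoothness plus Poincar\'e duality to convert $\chi=1$ into $\dim=0$.
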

\begin{proof} By Theorem \ref{T:onept} it suffices to show $\Leftarrow$. It is well-known (e.g. \cite{S2}) that $\Gr_\gamma(M)$ is smooth for a rigid representation $M$.
If $\chi(\Gr_\gamma(M))=1$, then the only possible nonzero homology group of $\Gr_\gamma(M)$ is the zeroth homology, and hence by Poincar\'{e} duality, $\Gr_\gamma(M)$ is $0$-dimensional. 
By Lemma \ref{L:subrep}, we have that $\gamma \perp \beta$. The claim follows from Theorem \ref{T:acyclicv}.
\end{proof}

\noindent Note that Corollary \ref{C:Fvertex} solves Conjecture \ref{conj:onept} for rigid representations of an acyclic quiver. Moreover, it suggestions the following conjecture.

\begin{conjecture} \label{conj:Fvertex} Let $F=\sum_{\gamma}c_\gamma \b{y}^\gamma$ be the $F$-polynomial of a cluster monomial (of any cluster algebra).
	Then $\gamma$ is a vertex of the Newton polytope of $F$ if and only if $c_\gamma=1$.
\end{conjecture}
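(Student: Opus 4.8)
The plan is to treat the two implications separately and, in each case, reduce to the representation-theoretic results of this paper via the categorification of cluster monomials. For the direction ``$\gamma$ a vertex $\Rightarrow c_\gamma=1$'' I would argue as follows. By the categorification (\cite{DWZ2} in the skew-symmetric case, and its species/modulated analogue in the skew-symmetrizable case) the $F$-polynomial of a cluster monomial equals $F_M$ for a reachable representation $M$; such an $M$ is $\E$-rigid, hence $\E$-birigid since over a Jacobian-type algebra $\E$-rigidity coincides with $\Ec$-rigidity. Granting this, the remark after Theorem \ref{T:intro1} identifies the Newton polytope of $F$ with the representation Newton polytope $\N(M)$, and Theorem \ref{T:onept} says that a vertex $\gamma$ of $\N(M)$ has $\Gr_\gamma(M)$ equal to a single reduced point, whence $c_\gamma=\chi(\Gr_\gamma(M))=1$. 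So this direction is essentially immediate from Theorem \ref{T:onept}; the only real work is to make the categorification input precise and, in particular, to replace a possibly infinite-dimensional Jacobian algebra by a finite-dimensional quotient through which $M$ factors, so that Theorem \ref{T:onept} applies.

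The substantive direction is ``$c_\gamma=1 \Rightarrow \gamma$ a vertex''. For an acyclic quiver this is Corollary \ref{C:Fvertex}, whose proof has three ingredients: $\Gr_\gamma(M)$ is smooth because $M$ is rigid; a smooth projective variety with an affine paving (hence no odd cohomology and nonnegative Betti numbers, as in the quantum positivity behind Lemma \ref{L:chipos}) and Euler characteristic $1$ is a single point by Poincar\'{e} duality; and the tangent space $\Hom(L,M/L)$ at that point then vanishes, forcing $\gamma\perp\beta$, so Theorem \ref{T:acyclicv} applies. I would try to run the same three steps for a general cluster monomial. The middle step is purely topological and needs only a smoothness-plus-purity statement for $\Gr_\gamma(M)$ when $M$ is a reachable rigid representation; this is available in many cases (e.g.\ through cluster categories and known affine pavings of quiver Grassmannians), and the nonnegativity of the $c_\gamma$ is now the proven positivity theorem. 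The scheme-theoretic content of the third step (tangent space $=\Hom(L,M/L)$) holds over any algebra, so what remains is a replacement for Theorem \ref{T:acyclicv}: exactly Conjecture \ref{conj:vertex}, that for $\E$-birigid $M$ a subrepresentation $L\hookrightarrow M$ is a vertex subrepresentation if and only if $\Hom(L,M/L)=0$. In short, for skew-symmetric cluster algebras Conjecture \ref{conj:Fvertex} reduces to Conjecture \ref{conj:vertex} together with smoothness and cohomological purity of quiver Grassmannians of reachable rigid representations.

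The main obstacle is Conjecture \ref{conj:vertex}, i.e.\ the vertex characterization for Jacobian algebras. Both acyclic proofs of Theorem \ref{T:acyclicv} rely on Schofield's theory of general subrepresentations (Lemma \ref{L:subrep}.(1)) and on Euler-form identities for $\Hom$ and $\Ext$; for Jacobian algebras there is no Euler form and no developed calculus of general subrepresentations in terms of $\E$ and $\Ec$ (one would presumably have to restrict to $M=\Coker(\delta)$, as in Conjecture \ref{conj:onept}). The natural attempt is to imitate the second, functorial proof of Theorem \ref{T:acyclicv}: given $L\hookrightarrow M$ with $\Hom(L,M/L)=0$, take $\delta$ to be the $\delta$-vector of the minimal presentation of $L$ and try to show $L=t_\dtb(M)$, which would identify $\dv L$ as a vertex via Lemma \ref{L:vertex}; carrying this out needs an $\E$-version of the long-exact-sequence bookkeeping in that proof and appears to require genuinely new input. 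A secondary obstacle is the skew-symmetrizable case, where even the categorification used above is less standard; a possible uniform workaround, which would also cover the easy direction, is to bypass representation theory and argue from the broken-line description of cluster-monomial $F$-polynomials in the scattering-diagram framework of Gross--Hacking--Keel--Kontsevich, showing that an exponent at a vertex of the Newton polytope is attained by a unique broken line and hence with coefficient $1$.
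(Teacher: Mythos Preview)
The statement is a \emph{conjecture}, not a theorem: the paper does not prove it. The paper's only commentary is that Corollary~\ref{C:Fvertex} settles the acyclic quiver case and that ``the general skew-symmetric case would follow from Conjecture~\ref{conj:vertex}.'' Your proposal is therefore not to be compared against a proof in the paper---there is none---but against this one-line reduction.

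On that score your analysis is essentially correct and more detailed than the paper's. You correctly note that the implication ``$\gamma$ a vertex $\Rightarrow c_\gamma=1$'' is immediate from Theorem~\ref{T:onept} once categorification is in place, and that the reverse implication is the substantive one. Your reduction of the reverse implication to Conjecture~\ref{conj:vertex} matches the paper's claim, and you go further by making explicit the auxiliary hypotheses the paper leaves implicit: to pass from $\chi(\Gr_\gamma(M))=1$ to $\Hom(L,M/L)=0$ one needs $\Gr_\gamma(M)$ to be a single reduced point, which in turn requires smoothness and cohomological purity (or an affine paving) for quiver Grassmannians of reachable rigid representations over Jacobian algebras. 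The paper's assertion that the skew-symmetric case ``would follow from Conjecture~\ref{conj:vertex}'' tacitly assumes this geometric input; you are right to flag it as a separate ingredient, since it is not known in full generality.

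Your identification of the main obstacle---a Jacobian-algebra analogue of Theorem~\ref{T:acyclicv}, i.e.\ Conjecture~\ref{conj:vertex}---and your sketch of why both acyclic proofs fail to transport (no Euler form, no Schofield-type general-subrepresentation calculus for $\E$ and $\Ec$) are accurate. The suggested workaround via broken lines in the Gross--Hacking--Keel--Kontsevich scattering-diagram picture is a reasonable alternative line of attack not mentioned in the paper, though making it precise would itself be a substantial project.
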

\noindent Conjecture \ref{conj:Fvertex} generalizes the constant term conjecture (\cite[Conjecture 5.4]{FZ4}). Corollary \ref{C:Fvertex} settled this conjecture in the acyclic quiver case. 
The general skew-symmetric case would follow from Conjecture \ref{conj:vertex}.

\section{Saturation} \label{S:saturation}
\begin{definition}[\cite{MTY}] We say the support of a polynomial $F=c_\gamma \b{y}^\gamma$ {\em saturated} if $c_\gamma\neq 0$ for any lattice points in the Newton polytope of $F$.
	Similarly we say the sub-lattice of a representation $M$ {\em saturated} if for any lattice point $\gamma\in \N(M)$ there is some $\gamma$-dimensional subrepresentation $L\hookrightarrow M$.
\end{definition}
\noindent It is easy to see that the saturation of the support of $F_M$ implies the saturation of the sub-lattice of $M$.
Note that in general the saturation of the support of $F_M$ does not imply the saturation of the sub-lattice of $M$.

\begin{theorem}\label{T:saturation} Let $M$ be a rigid representation of an acyclic quiver. Then both the sub-lattice of $M$ and the support of $F_M$ are saturated.
\end{theorem}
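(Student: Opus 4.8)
The plan is to reduce both saturation statements to a single combinatorial input, namely that for a rigid representation $M$ of dimension $\alpha$ over an acyclic quiver, a lattice point $\gamma\in\N(M)$ admits a $\gamma$-dimensional subrepresentation $L\hookrightarrow M$. By Lemma \ref{L:subrep}.(1) this is equivalent to $\ext(\gamma,\alpha-\gamma)=0$; once we know such an $L$ exists, the fact that $\chi(\Gr_\gamma(M))>0$ by Lemma \ref{L:chipos} gives $c_\gamma\neq 0$, so saturation of the support of $F_M$ follows from saturation of the sub-lattice. Thus everything comes down to: \emph{for every lattice point $\gamma$ in $\N(M)$ one has $\ext(\gamma,\alpha-\gamma)=0$.}

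First I would record the external description of $\N(M)$ coming from Theorem \ref{T:intro4} (Theorem \ref{T:Newton} and Corollary \ref{C:Newton}): for $M$ general of dimension $\alpha$, $\N(M)$ is cut out by inequalities $\delta_0(\gamma)\le 0$ for $\delta_0\in{\sf R}_0(M)$ and $\delta_1(\alpha-\gamma)\ge 0$ for $\delta_1\in{\sf R}_1(M)$, and moreover when $M$ is rigid each such $\delta$ is either $-e_i$ or (up to sign) corresponds to a real Schur root. The point of the rigidity is precisely this last clause: the facet normals are \emph{Schur roots}, which is what will let me apply Schofield's general-representation machinery. I would then set $\beta=\alpha-\gamma$ and aim to prove $\ext(\gamma,\beta)=0$ by relating the vanishing of $\ext$ to the facet inequalities. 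Concretely, using Schofield's criterion $\ext(\gamma,\beta)=0$ iff there is no dimension vector $\gamma'\le\gamma$ (in the subrepresentation order, i.e. $\gamma'$ a dimension vector of a general subrepresentation of a general $\gamma$-dimensional representation) with $\innerprod{\gamma',\beta}<0$, I would translate ``$\gamma\in\N(M)$'' into a statement about these $\gamma'$. The facet normals being Schur roots means exactly that the Euler-form pairings $\innerprod{v,-}$ and $\innerprod{-,\alpha-v}$ over the vertices $v\in\V(M)$ generate (the relevant half of) the normal fan, so a lattice point satisfying all the facet inequalities should force the non-existence of a destabilizing sub-dimension-vector.

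The main obstacle I expect is the passage from the convex-geometric containment $\gamma\in\N(M)$ to the \emph{integral}/homological statement $\ext(\gamma,\beta)=0$ — i.e. bridging the gap between ``$\gamma$ is a rational convex combination of vertices'' and ``$\gamma$ is itself a dimension vector of a subrepresentation.'' Rationally one can always write $n\gamma=\sum n_i\gamma_i$ with $\gamma_i\in\V(M)$, and each $\gamma_i$ \emph{is} realized by a (vertex) subrepresentation by Theorem \ref{T:acyclicv}, so $n\gamma$ is realized inside $nM$; but dividing back by $n$ is the non-trivial step, and it is where rigidity must be used in an essential, non-formal way (Remark \ref{R:saturation} says the statement is false without it). I would handle this by an induction on $\sum\gamma$, peeling off one facet: pick a facet of $\N(M)$ on which $\gamma$ does \emph{not} lie, use it together with the real-Schur-root structure of its normal to exhibit a rigid ``smaller'' representation (a summand, or $t_\dtb(M)$/$\tc_\dtb(M)$ for a suitable $\delta$, which is again rigid and general by the functor properties of Section \ref{S:stab}) whose Newton polytope contains $\gamma$ and has strictly fewer lattice points in the relevant direction, then apply the inductive hypothesis; the base case being a single vertex, handled by Theorem \ref{T:acyclicv}. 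Alternatively, if the facet-peeling bookkeeping gets unwieldy, one can argue directly with Schofield's recursive characterization of $\ext$-vanishing for general representations, using that canonical decompositions of rigid dimension vectors into real Schur roots are multiplicity-free (so $\gamma\circ\beta=1$, cf. \cite[Lemma 4.2]{DW2}), which by \cite[Theorem 2.10]{DW2} makes $\Gr_\gamma(M)$ a point and in particular non-empty.
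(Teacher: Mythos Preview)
Your reduction is exactly right: everything comes down to proving $\ext(\gamma,\alpha-\gamma)=0$ for every lattice point $\gamma\in\N(M)$, after which Lemma~\ref{L:subrep}.(1) and Lemma~\ref{L:chipos} finish the job. This is also what the paper does. The gap is in how you propose to reach that goal.

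Your route (b) is not salvageable. You invoke ``canonical decompositions of rigid dimension vectors are multiplicity-free, so $\gamma\circ\beta=1$'', but $\gamma$ is an \emph{arbitrary} lattice point of $\N(M)$; there is no reason for $\gamma$ (or $\beta=\alpha-\gamma$) to be a rigid dimension vector. Worse, $\gamma\circ\beta=1$ would force $\Gr_\gamma(M)$ to be a single point by \cite[Theorem 2.10]{DW2}, hence $\gamma$ would be a vertex --- but of course interior lattice points exist (look at any example in the paper). So this route proves too much and is therefore wrong.

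Your route (a), the facet-peeling induction, is too vague to be an argument. If $\gamma$ lies in the interior of $\N(M)$, it misses every facet; you then need a strictly smaller rigid representation $M'$ with $\gamma\in\N(M')$. The candidates you name, $t_{\dtb}(M)$ or $\tc_{\dtb}(M)$, are vertex subrepresentations, and there is no mechanism ensuring their Newton polytopes still contain $\gamma$. Nothing in Section~\ref{S:stab} gives you that containment, and the real-Schur-root description of the facet normals does not help either: the facet inequalities constrain $\gamma$, not the generic sub-dimension-vectors $\gamma'\hookrightarrow\gamma$ that appear in Schofield's criterion.

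The paper sidesteps all of this with a clean two-variable convexity trick that you are missing. The key observation (Corollary~\ref{C:HE0} and Remark~\ref{r:HE0}) is that for a \emph{fixed} $\gamma_0$, the set $\{\beta:\ext(\gamma_0,\beta)=0\}$ is itself cut out by linear inequalities, hence is a polyhedral cone; and symmetrically in the other variable. Now rigidity of $M$ (plus heredity) gives $\Ext(L_0,N_0)=0$ for any subrepresentation $L_0\hookrightarrow M$ and any quotient $M\twoheadrightarrow N_0$, so $\ext(\gamma_0,\beta_0)=0$ whenever $\gamma_0\in\V(M)$ and $\beta_0\in\check{\V}(M)$. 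Fix $\gamma_0\in\V(M)$: since all vertices of $\check{\N}(M)$ lie in the cone $\{\beta:\ext(\gamma_0,\beta)=0\}$, so does every lattice point $\beta\in\check{\N}(M)$. Now fix such a $\beta$ and run the same argument in the first variable: every lattice point $\gamma\in\N(M)$ lands in $\{\gamma:\ext(\gamma,\beta)=0\}$. Taking $\beta=\alpha-\gamma$ finishes. No induction, no facet description of $\N(M)$, no Schofield recursion --- just two applications of a single convexity fact.
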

\begin{proof} Let $\gamma_0$ be a vertex of $\N(M)$ and $\beta_0$ be any vertex of $\check{\N}(M)$.
	Since $\ext(M,M)=0$, we have that $\ext(\gamma_0,\beta_0)=0$.
	Corollary \ref{C:HE0} implies that all dimension vectors $\beta$ such that $\ext(\gamma_0,\beta)=0$ are given by convex polyhedral conditions. 
	So $\ext(\gamma_0,\beta)=0$ for any lattice point $\beta$ in $\check{\N}(M)$.
	For the same reason $\ext(\gamma,\beta)=0$ for any lattice point $\gamma$ in $\N(M)$.
	By Lemma \ref{L:subrep}.(1) the sub-lattice of $M$ is saturated.
	
	By the quantum positivity of $\Gr_\gamma(M)$ (Lemma \ref{L:chipos}), each $\Gr_\gamma(M)$ has positive Euler characteristic. Hence the support of $F_M$ is saturated as well.
\end{proof}

This is equivalent to say that for a cluster algebra of some acyclic quiver, the support of $F$-polynomial of any cluster monomial is saturated. 
In general we conjecture that ``acyclic quiver" can be dropped.

\begin{conjecture} \label{conj:saturation} The support of the $F$-polynomial of a cluster monomial (of any cluster algebra) is saturated.
\end{conjecture}
\noindent When the cluster algebra is skew-symmetric (but not necessarily acyclic), by the categorification \cite{DWZ2} we can associate a Jacobian algebra to model the cluster algebra. 
Then the conjecture is equivalent to that the support of $F_M$ is saturated for any mutation-reachable representation $M$.

\begin{remark} \label{R:saturation} The rigidity assumption is necessary. Consider the $3$-arrow Kronecker quiver and $M$ a general representation of dimension $\alpha=(3,4)$.
Then $\gamma=(2,3)$ is a lattice point in $\N(M)=\op{conv}\{(0,0),(0,4),(3,4)\}$ but $\ext(\gamma,\alpha-\gamma)=1$.	
So neither the support of $F_M$ nor the sub-lattice of $M$ is saturated.
\end{remark}

\section{Restriction to Facets} \label{S:faces}
\subsection{The $\delta$-stable Reduction of a Representation} \label{ss:stablered}
Let $\mc{V}=\{V_1,V_2,\dots,V_k\}$ be a set of pairwise non-isomorphic $\delta$-stable representations.
Let $\mc{F}(\mc{V})$ be the full subcategory of $\rep(A)$ whose objects are those representations admitting a finite filtration with factors in $\mc{V}$.
Recall from \cite[Theorem 1(d)]{R} (or prove directly from Definition \ref{D:King}) that $\Hom(V_i,V_j)=\mb{C}$ if $i=j$, otherwise $\Hom(V_i,V_j)=0$.
In particular,  $\mc{F}(\mc{V})$ is an abelian subcategory.

Let $\op{tw}(\mc{V})$ be the category of {\em twisted stalks} over the $A_\infty$-category $\mc{V}$ \cite{Ke2} (here we also use $\mc{V}$ to denote the $A_\infty$-category with objects $\{V_1,V_2,\dots,V_k\}$ and morphisms $\Ext_A^*(V_i, V_j)$).
Roughly speaking, a twisted stalk can be described as a pair $(\mb{V},\epsilon)$ where $\mb{V}$ is a sequence of factors from $\mc{V}$ and 
$\ep$ is an upper triangular matrix with entries from $\Ext^1(V_i,V_j)$ satisfying the Maurer-Cartan equation.

\begin{theorem}[{\cite[Proposition 2.3]{Ke2},\cite{KKO}}] The category $\mc{F}(\mc{V})$ is equivalent to the category of twisted stalks $\op{tw}(\mc{V})$.
\end{theorem}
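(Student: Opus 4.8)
The plan is to establish an equivalence $\mc{F}(\mc{V}) \simeq \op{tw}(\mc{V})$ by constructing a functor in one direction and checking it is fully faithful and essentially surjective, exploiting the fact that $\mc{F}(\mc{V})$ is the extension closure of a finite collection of $\op{Hom}$-orthogonal bricks $V_i$ with $\op{End}(V_i)=\mb{C}$. First I would recall the structure available to us: by the quoted result of Ringel (\cite{R}, Theorem 1(d)) the $V_i$ are pairwise $\op{Hom}$-orthogonal bricks, so $\mc{F}(\mc{V})$ is an abelian subcategory of $\rep(A)$ with exactly $k$ simple objects, namely $V_1,\dots,V_k$, and every object has finite length in it. The point is that the full $A_\infty$-structure on $\bigoplus_{i,j}\op{Ext}^*_A(V_i,V_j)$ — concretely, a minimal model of the dg-endomorphism algebra of a resolution of $\bigoplus_i V_i$ — captures all the data needed to reconstruct $\mc{F}(\mc{V})$, via the formalism of twisted complexes (here, twisted stalks, since we only iterate degree-$0$ shifts).

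The key steps, in order, would be: (1) Fix projective resolutions of the $V_i$ in $\rep(A)$ and form the dg-algebra $\mc{E} = \op{RHom}_A(\bigoplus_i V_i, \bigoplus_i V_i)$; take a minimal $A_\infty$-model $(\mathbf{E},m_\bullet)$ with $H^*(\mathbf{E}) = \bigoplus_{i,j}\op{Ext}^*_A(V_i,V_j)$, which is the $A_\infty$-category $\mc{V}$ in the statement. (2) Recall that $\op{tw}(\mc{V})$ is by definition the category of one-sided twisted complexes built from shifts of the $V_i$ — and since the $V_i$ live in a single degree and extensions between them are concentrated in $\op{Ext}^1$, the relevant twisted complexes are exactly the "twisted stalks" $(\mathbb{V},\epsilon)$ with $\epsilon$ strictly upper triangular and valued in $\op{Ext}^1$, subject to the Maurer–Cartan equation $\sum_{n\geq 1} m_n(\epsilon,\dots,\epsilon)=0$; the filtration of a twisted stalk by its sub-stalks gives the analogue of the filtration defining $\mc{F}(\mc{V})$. (3) Construct the comparison functor $\Phi:\op{tw}(\mc{V})\to \mc{F}(\mc{V})$: a twisted stalk $(\mathbb{V},\epsilon)$ with factors $V_{i_1},\dots,V_{i_r}$ and Maurer–Cartan element $\epsilon$ is sent to the representation obtained by gluing the $V_{i_\ell}$ along the iterated extension classes recorded by $\epsilon$ (formally, the totalization / convolution of the twisted complex, interpreted in the heart of the standard $t$-structure). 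Functoriality on morphisms of twisted stalks is forced by the $A_\infty$-morphism structure. (4) Check $\Phi$ is essentially surjective: every $M\in\mc{F}(\mc{V})$ has a finite filtration with factors among the $V_i$, and the successive extension classes assemble (after choosing splittings of the underlying graded object) into a Maurer–Cartan element $\epsilon$ with $\Phi(\mathbb{V},\epsilon)\cong M$. (5) Check $\Phi$ is fully faithful: compute $\op{Hom}_{\op{tw}(\mc{V})}((\mathbb{V},\epsilon),(\mathbb{V}',\epsilon'))$ as the $H^0$ of the twisted Hom-complex with differential $m_1 + [\epsilon,-] + \cdots$, and compare it with $\op{Hom}_{\mc{F}(\mc{V})}(\Phi(\mathbb{V},\epsilon),\Phi(\mathbb{V}',\epsilon'))$ — both are computed by the same filtration spectral sequence whose $E_1$-page is $\bigoplus \op{Hom}_A(V_{i_\ell}, V'_{j_m})$ and whose higher differentials are governed by the $m_n$; since $\op{Hom}_A(V_i,V_j)=\delta_{ij}\mb{C}$ the spectral sequences agree. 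This is essentially the content of \cite[Proposition 2.3]{Ke2} and \cite{KKO}, so the proof can be kept short by invoking those results once the setup (the dg-endomorphism algebra and its minimal model) is in place.

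The main obstacle I anticipate is step (5), the full faithfulness: one must be careful that the higher $A_\infty$-operations $m_n$ really do account for the higher differentials in the filtration spectral sequence computing $\op{Hom}$ in $\mc{F}(\mc{V})$, and that no information is lost or double-counted when passing between the abelian-category extension picture and the twisted-complex picture. The cleanest route is probably not to do this by hand but to observe that $\mc{F}(\mc{V})$ embeds (via its derived category, or via the inclusion into $\rep(A)$ followed by $\op{RHom}$) into the derived category of the dg-algebra $\mc{E}$, identify its image with the extension closure of the simple $\mc{E}$-modules, and then apply the general statement that for an $A_\infty$-algebra the extension closure of the simples inside the derived category is equivalent to the category of twisted complexes — which is exactly \cite[Proposition 2.3]{Ke2}. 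With that black box, the only remaining work is bookkeeping: identifying $\op{tw}(\mc{V})$ with the "stalk" subcategory (degree-$0$ shifts only, $\epsilon\in\op{Ext}^1$) and matching the Maurer–Cartan equation with the condition that the glued object be an honest representation rather than a complex.
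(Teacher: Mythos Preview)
The paper does not supply its own proof of this theorem; it is stated with attribution to \cite[Proposition~2.3]{Ke2} and \cite{KKO} and used as a black box. So there is no in-paper argument to compare against.

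Your sketch is a reasonable outline of the argument in those references: pass to a minimal $A_\infty$-model of the derived endomorphism algebra of $\bigoplus_i V_i$, identify $\mc{F}(\mc{V})$ with the extension closure of the simples, and match this with the category of one-sided twisted complexes concentrated in degree zero. You correctly isolate the nontrivial point as full faithfulness and propose the right shortcut, namely to invoke Keller's general comparison between the filtered/extension closure and twisted complexes rather than chase the spectral sequence by hand. One caution: your step~(4), assembling the successive extension classes of a filtration into a Maurer--Cartan element, is exactly where the higher $m_n$ enter and is not just bookkeeping; the associativity constraints on iterated extensions are what force the Maurer--Cartan equation, and this is the substance of the cited results rather than something checkable in a line. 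Since the paper treats the theorem as imported, your plan of setting up the minimal model and then citing \cite{Ke2,KKO} for the equivalence is appropriate and matches how the paper itself uses the statement.
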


We introduce the following quiver with relations $(Q_{\mc{V}}, r_{\mc{V}})$:
\begin{enumerate}
	\item The vertices of $Q_{\mc{V}}$ are in bijection with elements in $\mc{V}$;
	\item The arrows from $V_i$ to $V_j$ are in bijection with a basis in $\Ext_A^1(V_i,V_j)^*$;
	\item The relations $r_{\mc{V}}$ are given by the image of the dual of the $A_\infty$-structure maps
	$$m_n: \Ext_A^1(V,V)^{\otimes n} \to \Ext_A^2(V,V).$$
\end{enumerate}
Let $A_{\mc{V}}$ be the completed path algebra $\widehat{\mb{C}Q_{\mc{V}}}$ modulo the closed ideal generated by $r_{\mc{V}}$.

\begin{lemma}[{\cite[Proposition 6.3]{KKO}}] \label{L:rho} There is a functor $ \op{tw}(\mc{V}) \to \rep(A_{\mc{V}})$,
	and thus a functor $\rho_{\mc{V}}: \mc{F}(\mc{V}) \to \rep(A_{\mc{V}})$, which is bijective on objects.
	Moreover, $U$ is a subrepresentation of $W$ in $\mc{F}(\mc{V})$ if and only if $\rho_{\mc{V}}(U)$ is a subrepresentation of $\rho_{\mc{V}}(W)$.
\end{lemma}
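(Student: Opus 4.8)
The plan is to build the functor $\op{tw}(\mc{V}) \to \rep(A_{\mc{V}})$ directly from the unpacking of a twisted stalk, and then obtain $\rho_{\mc{V}}$ by precomposing with the equivalence $\mc{F}(\mc{V}) \simeq \op{tw}(\mc{V})$ of the preceding theorem. First I would recall that a twisted stalk is a pair $(\mathbb{V},\epsilon)$ where $\mathbb{V} = \bigoplus_i \mathbb{V}_i$ with $\mathbb{V}_i$ a (finite) sum of copies of $V_i$, and $\epsilon$ is strictly upper triangular with $\epsilon_{ij} \in \Ext^1_A(V_i,V_j) \otimes \Hom(\mathbb{V}_i,\mathbb{V}_j)$ satisfying the Maurer--Cartan equation $\sum_n m_n(\epsilon,\dots,\epsilon) = 0$. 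The point is that such an $\epsilon$ is precisely the data of a representation of $A_{\mc{V}}$: the vertex space at $V_i$ is $\Hom(\mathbb{C},\mathbb{V}_i)$ (equivalently the multiplicity space of $V_i$), an arrow $a\colon V_i\to V_j$ dual to a basis element of $\Ext^1_A(V_i,V_j)$ acts by the corresponding component of $\epsilon_{ij}$, and the Maurer--Cartan equation, once dualized, is exactly the statement that $\epsilon$ kills the relations $r_{\mc{V}}$ coming from the $A_\infty$-maps $m_n$. A morphism of twisted stalks is a matrix of maps of multiplicity spaces commuting with $\epsilon$ in the appropriate sense, which is visibly a morphism of $A_{\mc{V}}$-representations. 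This gives the functor on objects and morphisms; bijectivity on objects is the above dictionary read in both directions.

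Next I would address the ``moreover'' clause. By the theorem and Lemma \ref{L:rho}'s first half, it suffices to check it after transporting to twisted stalks: if $U \hookrightarrow W$ in $\mc{F}(\mc{V})$, the corresponding twisted stalks satisfy $\mathbb{U}_i \subseteq \mathbb{W}_i$ compatibly with the respective $\epsilon$'s (the filtration realizing $U\subseteq W$ in $\mc{F}(\mc{V})$ refines to filtrations of $U$ and $W/U$, whence $\mathbb{V}$ decomposes as multiplicity spaces), and this is exactly a subrepresentation of $\rho_{\mc{V}}(W)$ in $\rep(A_{\mc{V}})$. Conversely, a subrepresentation of $\rho_{\mc{V}}(W)$ is a collection of subspaces of the multiplicity spaces stable under $\epsilon$, which reassembles (using that $\mc{F}(\mc{V})$ is abelian and the $V_i$ are $\Hom$-orthogonal bricks, by \cite[Theorem 1(d)]{R}) to an honest subobject of $W$ inside $\mc{F}(\mc{V})$. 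One should remark that $\rho_{\mc{V}}$ need not be full or faithful in general, only bijective on objects with the stated compatibility on subobjects; but that is all that is claimed, so no more is needed.

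The main obstacle, and the reason I would lean on \cite[Proposition 6.3]{KKO} rather than reprove everything, is the bookkeeping that matches the dualized $A_\infty$-relations with the ideal $r_{\mc{V}}$ precisely (including completeness/convergence issues for $\widehat{\mb{C}Q_{\mc{V}}}$ when infinitely many $m_n$ are nonzero), together with checking that the twisted-stalk morphisms correspond exactly to $A_{\mc{V}}$-module morphisms under a choice of bases of the $\Ext^1$-spaces. These are the steps where sign and index conventions for $A_\infty$-categories are easy to get wrong; I would set up explicit bases of $\Ext^1_A(V_i,V_j)$ and $\Ext^2_A(V_i,V_j)$ once and for all and keep the verification purely formal. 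The compatibility-with-subobjects statement, by contrast, is essentially a formal consequence of the object-level dictionary plus abelianness of $\mc{F}(\mc{V})$, so I expect it to be short.
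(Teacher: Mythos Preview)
The paper does not supply its own proof of this lemma: it is stated with the citation to \cite[Proposition 6.3]{KKO} and used as a black box, with only the subsequent remark noting that $\rho_{\mc{V}}$ is expected (but not asserted) to be an equivalence. Your proposal is therefore not really competing with any argument in the paper; you are sketching a proof where the paper simply imports the result. That said, your outline is the standard one and matches the construction in \cite{KKO}: read off the multiplicity spaces as vertex spaces, the components of $\epsilon$ as arrow actions, and identify the Maurer--Cartan equation with the vanishing on $r_{\mc{V}}$. Your caution about the bookkeeping (dualizing the $A_\infty$-maps, signs, completion) and your decision to defer those details to \cite{KKO} are exactly in line with how the paper treats the lemma.

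One small point worth flagging: your description of $\epsilon$ as ``strictly upper triangular with $\epsilon_{ij}\in\Ext^1_A(V_i,V_j)\otimes\Hom(\mathbb{V}_i,\mathbb{V}_j)$'' is slightly off in indexing---the upper-triangularity in the definition of a twisted stalk is with respect to an ordering of the \emph{factors} in a chosen filtration, not with respect to the labels $i$ of the isomorphism classes $V_i$; in particular $\epsilon$ can have components in $\Ext^1_A(V_i,V_i)$ (self-extensions), and these contribute loops in $Q_{\mc{V}}$. This does not affect the shape of your argument, but it matters for getting the dictionary with $\rep(A_{\mc{V}})$ right and for the subobject statement, since a subrepresentation need not respect any block decomposition by the $V_i$.
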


\begin{remark} We expect that the functor $\rho_{\mc{V}}$ is an equivalence.
In general, according to \cite{Ke2,KKO} the category $\mc{F}(\mc{V})$ is equivalent to a category which has the same objects as $\rep(A_{\mc{V}})$ but may contain extra morphisms.
In our special situation where $\mc{V}$ consists of $\delta$-stable representations, we believe there are no extra morphisms.

Moreover, since $\hom(V_i, V_j) = \delta_{ij}$, the construction of this functor in \cite{KKO} implies that it sends $\mc{V}$ bijectively to the set of $1$-dimensional simple representations in $\rep(A_{\mc{V}})$.
\end{remark}


Recall that each $\delta$-semistable representation $W$ has a filtration:
$$0=W_0 \subset W_1 \subset W_2 \subset \cdots \subset W_l=W$$
such that each factor $W_{i}/W_{i-1}$ is $\delta$-stable.
This is the Jordan-Holder filtration of $W$ in the category of $\delta$-semistable representations.
It is well-known that the Jordan-Holder filtration is not unique but each factor $V_i$ with multiplicities is uniquely determined by $W$.
We denote by $\mc{V}_\delta(W)$ the set of all $\delta$-stable factors in such filtration of $W$.

\begin{definition}
We call $\rho_{\mc{V}_\delta(W)}(W)$ {\em the $\delta$-stable reduction of $W$}, 
and abbreviate it as $\rho_{\delta}(W)$.
\end{definition}
\noindent We remark that a certain geometric construction related to $\rho_\delta$ for quivers without relations was considered in \cite{AB}.
Also note that $\rho_{\delta}$ is not a functor because its target will vary according to $W$.
We also abbreviate $A_{\mc{V}_\delta(W)}$ as $A_{\delta,W}$.

Let $m \in \mb{Z}^{(Q_{\mc{V}})_0}$ be the multiplicity vector of $U\in\mc{F}(\mc{V})$, 
i.e., each $V_i$ appears $m(i)$ times as a factor in the $\delta$-stable filtration of $U$. 
Let $\Gr_{m}^{\delta}(W)$ be the variety parametrizing $\delta$-semistable subrepresentations of $W$ with multiplicity vector $m$ (in $\mc{V}_\delta(W)$).
As a simple corollary of Lemma \ref{L:rho}, we have that
\begin{corollary} \label{C:red} The variety $\Gr_m^\delta(W)$ is isomorphic to $\Gr_{m}( \rho_\delta (W) )$.
\end{corollary}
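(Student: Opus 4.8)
The plan is to deduce Corollary~\ref{C:red} directly from Lemma~\ref{L:rho}, which is the only substantial input needed. First I would recall what the two varieties parametrize. A point of $\Gr_m^\delta(W)$ is a subrepresentation $U\subseteq W$ lying in $\mc{F}(\mc{V}_\delta(W))$ with multiplicity vector $m$; a point of $\Gr_m(\rho_\delta(W))$ is a subrepresentation of $\rho_\delta(W)=\rho_{\mc{V}_\delta(W)}(W)$ of dimension vector $m\in\mb{Z}^{(Q_{\mc{V}})_0}$. The bijection on closed points is immediate from Lemma~\ref{L:rho}: $U\mapsto\rho_{\mc{V}_\delta(W)}(U)$ is a bijection between subrepresentations of $W$ in $\mc{F}(\mc{V}_\delta(W))$ and subrepresentations of $\rho_\delta(W)$, and since $\rho_{\mc{V}}$ sends each $V_i$ to the $1$-dimensional simple at vertex $i$ (as noted in the remark following Lemma~\ref{L:rho}), the multiplicity vector $m$ of $U$ in $\mc{V}_\delta(W)$ matches the dimension vector of $\rho_{\mc{V}_\delta(W)}(U)$. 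So the two varieties have the same underlying sets.

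Next I would upgrade this set-theoretic bijection to an isomorphism of varieties. The cleanest route is to exhibit it as a morphism in both directions, or to identify both as the same quot-type functor. I would argue that both schemes represent the same functor on $\mb{C}$-algebras: a family of subobjects in a flat family sense. Concretely, $\Gr_m^\delta(W)$ sits inside the ordinary quiver Grassmannian $\Gr_{\dim U}(W)$ of $A$-subrepresentations of $W$ as the locally closed subscheme of those subrepresentations that are $\delta$-semistable with the prescribed multiplicities; one checks semistability and the multiplicity condition are locally closed (this uses that $\delta$-semistability is an open condition and that having a fixed set of Jordan--H\"older factors with given multiplicities cuts out a locally closed piece, by semicontinuity of $\hom(n\delta,-)$ via Corollary~\ref{C:ss}). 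On the other side, $\Gr_m(\rho_\delta(W))$ is a closed subscheme of a product of ordinary Grassmannians. The functor $\rho_{\mc{V}_\delta(W)}$, being an additive functor bijective on objects and preserving the subobject relation, induces a natural transformation of the representing functors; I would verify it is a natural isomorphism by base change to an arbitrary test algebra, using that the $A_\infty$-construction of \cite{KKO} underlying $\rho_{\mc{V}}$ is functorial in families.

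I expect the main obstacle to be precisely this scheme-theoretic point: making sure the bijection of Lemma~\ref{L:rho} is compatible with the scheme structures and families, rather than just with closed points. Lemma~\ref{L:rho} as stated is a statement about objects and the subrepresentation relation, not manifestly about flat families, so I would need to either invoke the construction in \cite{KKO} at the level of families, or argue directly that the tangent spaces (and, if one wants, higher infinitesimal data) agree: by Lemma~\ref{L:rho} the functor $\rho_\delta$ identifies $\Hom_A(U,W/U)$ restricted to maps landing in $\mc{F}(\mc{V})$ with $\Hom_{A_\delta,W}(\rho_\delta U,\rho_\delta W/\rho_\delta U)$, and the extra $\Hom$'s out of $U$ into $W/U$ in $\rep(A)$ but not in $\mc{F}(\mc{V})$ vanish because any such map would, upon composing with the $\delta$-stable filtration, produce a nonzero map between $\delta$-stable objects of the same slope forcing an isomorphism of factors. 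This shows the two tangent spaces coincide, and with a little care the same argument handles obstructions, giving the isomorphism of schemes. Since the corollary is only labelled ``a simple corollary'', I would keep the write-up short, citing Lemma~\ref{L:rho} for the bijection and the remark for the matching of multiplicity vectors with dimension vectors, and sketch the compatibility with scheme structure in one or two lines.
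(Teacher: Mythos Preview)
Your proposal is correct and follows the same route as the paper, which gives no proof beyond the phrase ``as a simple corollary of Lemma~\ref{L:rho}''. Your additional discussion of the scheme-theoretic compatibility (tangent spaces, families) is more careful than anything the paper supplies, but the core idea---apply the bijection of Lemma~\ref{L:rho} and match multiplicity vectors with dimension vectors via the remark following it---is exactly the paper's intended argument.
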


Let us make a connection with the constructions in \cite{GL,S1}.
We now suppose that $A=\mb{C}Q$ has no relations.  
We assume that $\ep$ is a weight such that either $\ep=-e_i$ or $\Coker(\ep)$ is an exceptional representation $E$.
Recall that a representation $E$ is called {\em exceptional} if $\Hom(E,E)=\mb{C}$ and $\Ext(E,E)=0$.
This implies that $\dv E$ is a {\em real Schur root}.
By Remark \ref{r:n=1}, the category of $\ep$-semistable representations is nothing but the right orthogonal subcategory 
$$E^\perp:=\{M\in\rep(Q) \mid \Hom(E,M)=\Ext(E,M)=0 \}.$$

\begin{lemma}[{\cite{GL,S1}}] \label{L:ortho} The embedding $\ss{\ep}{Q} \hookrightarrow \rep(Q)$ has a left adjoint $p_{\ep}: \rep(Q) \to \ss{\ep}{Q}$.
	The category $\ss{\ep}{Q}$ is equivalent to $\rep(Q_\ep)$ where $Q_\ep$ is a quiver with one vertex less than $Q$.
\end{lemma}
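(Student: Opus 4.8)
The plan is to establish Lemma~\ref{L:ortho} in two parts, treating the left adjoint and the equivalence separately, following the classical approach of Geigle--Lenzing and Schofield but phrased in the language of $\ep$-semistability already set up in this paper.

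\medskip

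\textbf{Construction of the left adjoint.} First I would dispose of the trivial case $\ep = -e_i$: here $\ss{\ep}{Q}$ is the subcategory of representations $M$ with $M(i)=0$, and $p_\ep(M) = M/(\text{subrepresentation generated along paths out of the }i\text{-summand})$; more precisely one quotients by the trace of $S_i$-related structure, but the cleanest description is that $p_\ep$ is the obvious restriction-of-support quotient, which is visibly left adjoint to the full inclusion. For the main case $\ep$ with $\Coker(\ep) = E$ exceptional, I would use Remark~\ref{r:n=1}.(1): since $E$ exceptional forces $\ext(\ep,\ep)=0$, Corollary~\ref{C:ss} identifies $\ss{\ep}{Q}$ with $E^\perp = \{M \mid \Hom(E,M)=\Ext(E,M)=0\}$, and by Lemma~\ref{L:L=tf} (with $n=1$) the torsion pair functor $t_\ep$ gives, for each $M$, a subrepresentation $t_\ep(M)$ with $M/t_\ep(M)\in\mc{F}(\ep) = \mc{F}(E)^\perp$-ish. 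Actually the left adjoint is the \emph{coreflection}: I would build $p_\ep(M)$ as the cokernel of the universal map $E\otimes\Hom(E,M) \to M$ followed by killing $\Ext(E,-)$; concretely, since $E$ is exceptional, $\Hom(E,M)$ and $\Ext^1(E,M)$ are finite-dimensional, and one forms the ``approximation sequence'' realizing $p_\ep(M)$ as the universal quotient of $M$ lying in $E^\perp$. The adjunction $\Hom_Q(M, N) \cong \Hom_{\ss{\ep}{Q}}(p_\ep(M), N)$ for $N\in\ss{\ep}{Q}$ then follows by applying $\Hom(-,N)$ to the defining sequence and using $\Hom(E,N)=\Ext(E,N)=0$.

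\medskip

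\textbf{The equivalence $\ss{\ep}{Q}\simeq\rep(Q_\ep)$.} Since $\ep$ is exceptional, $E^\perp$ is a hereditary abelian category (closed under extensions, kernels, cokernels inside $\rep(Q)$ — this uses that $E$ is exceptional so that $\Hom(E,-),\Ext(E,-)$ behave well under the relevant exact sequences), and it has $|Q_0|-1$ simple objects: one shows the images of the simple $Q$-representations $S_j$ under $p_\ep$, suitably reindexed, give a complete list, or alternatively that $E^\perp$ is generated by $|Q_0|-1$ exceptional objects forming an exceptional sequence complementary to $E$ (this is the Crawley-Boevey/Ringel complement-of-an-exceptional-sequence fact). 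Being a hereditary abelian length category with finitely many simples and finite-dimensional Hom/Ext spaces, $E^\perp$ is equivalent to $\rep$ of a finite acyclic quiver $Q_\ep$ with $|Q_0|-1$ vertices; the number of arrows from vertex $a$ to vertex $b$ in $Q_\ep$ is $\dim\Ext^1_Q(T_a, T_b)$ where $T_a, T_b$ are the corresponding simples of $E^\perp$. For a self-contained argument I would instead cite \cite{GL,S1} directly for this structural statement, as the lemma is attributed to them.

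\medskip

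\textbf{Main obstacle.} The genuinely delicate point is verifying that $E^\perp$ is again \emph{hereditary} (global dimension $\le 1$), since a priori perpendicular categories of exceptional objects can fail to be hereditary if the ambient category has higher global dimension — here we are saved because $\rep(Q)$ is hereditary and $E$ is exceptional, but the bookkeeping with the long exact sequences of Lemma~\ref{L:E} and with the connecting maps $\Ext^1(E,-)$ requires care, in particular showing that $\Ext^2$ vanishes in $E^\perp$ means chasing a diagram where one must know $\Ext^2_Q = 0$ and that the embedding $E^\perp \hookrightarrow \rep(Q)$ is exact and ``closed'' under the relevant extensions. Since the statement is explicitly credited to \cite{GL,S1}, the honest move in the paper is to give the short reduction (trivial case plus Remark~\ref{r:n=1}.(1) to identify $\ss{\ep}{Q}$ with $E^\perp$) and then invoke those references for the adjoint and the equivalence, rather than reproving perpendicular-category theory from scratch.
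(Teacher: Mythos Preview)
Your proposal is correct, and your final recommendation—to record the identification $\ss{\ep}{Q}=E^\perp$ via Remark~\ref{r:n=1} and then simply invoke \cite{GL,S1}—is exactly what the paper does: the lemma is stated with those citations and carries no proof at all, only a remark that the case $\ep=-e_i$ is trivial to incorporate and that a right adjoint also exists. Your sketch of the adjoint construction and of the equivalence is an accurate summary of what is in those references (and indeed the concrete description of $p_\ep$ as the cokernel of the universal map $hE\to M$ when $\ext(\ep,M)=0$ resurfaces later in the paper, in the proof of Corollary~\ref{C:acyclicf}), but it goes well beyond what the paper itself includes.
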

\begin{remark} In Schofield's original paper, he only deals with the case when $\ep\neq -e_i$ but it is rather trivial to incorporate this case.
Moreover, there is also a right adjoint $\check{p}_{\ep}: \rep(Q) \to \ss{\ep}{Q}$ which can be constructed using the Auslander-Reiten duality. 	
\end{remark}

\noindent Let $\mc{V}$ be the set of simple objects in $\ss{\ep}{Q}$. Then the algebra $\mb{C}Q_{\mc{V}}$ is just the path algebra $\mb{C}Q_\ep$.
The functor $\rho_{\mc{V}}$ is nothing but the equivalence $\ss{\ep}{Q} \to \rep(Q_\ep)$. 
So we denote the composition of $p_\ep$ (resp. $\check{p}_\ep$) with the equivalence by $\pi_{Q_\ep}$ (resp. $\check{\pi}_{Q_\ep}$): $\rep(Q)\to \rep(Q_\ep)$.
The above comments and notions will be used in Corollary \ref{C:acyclicf}.

\subsection{General cases}
From now on, we assume WLOG that the representation $M$ is supported on each vertex of the quiver so that its Newton polytope is full-dimensional (because $M$ is filtered by $1$-dimensional simples).
Otherwise, we can always work with some subalgebra $eAe$ where $e$ is an idempotent of $A$.
Suppose that $F_M = \sum_{\gamma} c_\gamma \b{y}^\gamma$.
In this subsection, we study the restriction of $F_M$ to some face ${\sf \Lambda}$ of $\N(M)$.
By definition, it is 
$$\sum_{\gamma\mid \gamma\in \sf\Lambda} c_\gamma \b{y}^\gamma.$$

We denote by $\pi_{\delta}$ the composition of the stabilization functor $\dtb^\perp$ with the $\delta$-stable reduction $\rho_\delta$.
For the same reason as $\rho_\delta$, $\pi_{\delta}$ is not a functor.
Let $\iota_{\mc{V}}$ be the linear map $K_0(\rep(A_{\mc{V}})) \to K_0(\rep(A))$ induced by ${e}_i \mapsto \dv V_i$.
So for each $W$ we have a linear map $\iota_{\mc{V}_\delta(W)}$. We will abbreviate it as $\iota_{\delta}$.
We keep in mind that the definition of $\iota_\delta$ will depend on $W$.
Each $\mb{Z}$-linear map $\iota: \mb{Z}^m\to \mb{Z}^n$ which restricts to $\mb{N}^m\to \mb{N}^n$
induces a {\em monomial change of variables} $\mb{C}[x_1,x_2,\dots,x_m] \to \mb{C}[y_1,y_2,\dots,y_n]$ given by the linear extension of
$$\b{x}^\alpha \mapsto \b{y}^{\iota(\alpha)}.$$
Recall the functors $t_\dtb$ and $\tc_\dtb$ in Section \ref{ss:torsion}.
\begin{theorem} \label{T:faces} Let $M\in\rep(A)$. Let $\delta$ be the outer normal vector of some facet of the Newton polytope $\N(M)$.
Then the restriction of $F_M$ to this facet is given by 
$$ \b{y}^{\dv t_{\dtb}(M)} \iota_{\delta} \left( F_{\pi_{\delta} (M) } \right).$$
\end{theorem}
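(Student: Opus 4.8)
The plan is to match subrepresentations of $M$ whose dimension vector lies on the facet $\mathsf{\Lambda}$ with subrepresentations of the $\delta$-stable reduction $\pi_\delta(M) = \rho_\delta(\dtb^\perp(M))$, and then to compute dimension vectors through the linear map $\iota_\delta$. First I would observe that $\gamma\in\mathsf{\Lambda}$ exactly when $\delta(\gamma) = f_M(\delta) = \hom(n\delta,M)$ for the relevant $n$, i.e.\ exactly when there is an $L\hookrightarrow M$ with $\dv L = \gamma$ and $L\in\mc{L}(\delta,M)$; so the restriction $\sum_{\gamma\in\mathsf{\Lambda}}c_\gamma\b{y}^\gamma$ is the generating function $\sum_\gamma \chi\big(\{L\in\Gr_\gamma(M)\mid L\in\mc{L}(\delta,M)\}\big)\b{y}^\gamma$ — provided one checks that the locus of such $L$ is open and closed in $\Gr_\gamma(M)$ (it is cut out by the vanishing/maximality of a semicontinuous function $\delta(\dv -)$, hence a union of components), so its Euler characteristic is the sub-sum of $c_\gamma$ coming from this locus. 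The key structural input is Theorem \ref{T:maxminsub} (and Theorem \ref{T:torsion}): every $L\in\mc{L}(\delta,M)$ satisfies $L_{\min}\subseteq L\subseteq L_{\max}$, so $L$ corresponds bijectively to the subrepresentation $L/L_{\min}\subseteq L_{\max}/L_{\min} = \dtb^\perp(M)$, which is $\delta$-semistable by Theorem \ref{T:maxminsub}, with $\delta$-semistable quotient $L_{\max}/L$. Conversely any $\delta$-semistable subrepresentation $\overline L\subseteq \dtb^\perp(M)$ with $\delta$-semistable quotient pulls back to an element of $\mc{L}(\delta,M)$, since $\delta(\dv -)$ is then $f_M(\delta)$ on its preimage; I would need a short lemma that the preimage of a $\delta$-semistable subrepresentation of $\dtb^\perp(M)$ lies in $\mc{L}(\delta,M)$ precisely when it is one of these (using that $\dv L_{\min}$ and $\dv L_{\max}$ both evaluate to $f_M(\delta)$ under $\delta$ and that $\delta$ is nonnegative on subs of $L_{\max}/L_{\min}$).

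Next I would pass to the stable reduction. A $\delta$-semistable $\overline L\subseteq \dtb^\perp(M)$ is the same as an object of $\mc{F}(\mc{V}_\delta(M))$ together with a sub-inclusion into $\dtb^\perp(M)$ regarded in that category; by Lemma \ref{L:rho} (the functor $\rho_\delta$ being bijective on objects and preserving the subrepresentation relation) this is the same datum as a subrepresentation $\rho_\delta(\overline L)\subseteq \rho_\delta(\dtb^\perp(M)) = \pi_\delta(M)$ of $\rep(A_{\delta,M})$. Moreover Corollary \ref{C:red} gives an \emph{isomorphism of varieties} $\Gr_m^\delta(\dtb^\perp(M)) \cong \Gr_m(\pi_\delta(M))$ for each multiplicity vector $m$, so Euler characteristics match. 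The dimension vector of $\overline L$ in $\rep(A)$ is $\dv L - \dv L_{\min}$; in terms of the multiplicity vector $m$ of $\rho_\delta(\overline L)$ it is $\iota_\delta(m)$ by definition of $\iota_\delta$ (it sends $e_i\mapsto \dv V_i$, and $\overline L$ is filtered by the $V_i$'s with multiplicities $m$). Therefore $\dv L = \dv t_\dtb(M) + \iota_\delta(m)$, using $L_{\min} = t_\dtb(M)$ from Theorem \ref{T:torsion}.

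Assembling: the restriction equals
$$\sum_{m} \chi\big(\Gr_m^\delta(\dtb^\perp(M))\big)\, \b{y}^{\dv t_\dtb(M) + \iota_\delta(m)}
= \b{y}^{\dv t_\dtb(M)} \sum_m \chi\big(\Gr_m(\pi_\delta(M))\big)\,\b{y}^{\iota_\delta(m)}
= \b{y}^{\dv t_\dtb(M)}\, \iota_\delta\!\big(F_{\pi_\delta(M)}\big),$$
where the middle step is Corollary \ref{C:red} and the last step is the definition of the monomial change of variables induced by $\iota_\delta$ applied to $F_{\pi_\delta(M)} = \sum_m \chi(\Gr_m(\pi_\delta(M)))\,\b{x}^m$. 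I expect the main obstacle to be the bijection at the level of \emph{varieties} rather than sets: one must be careful that the locus $\{L\in\Gr_\gamma(M): L\in\mc{L}(\delta,M)\}$ is genuinely open-and-closed (so that its Euler characteristic is a partial sum of the $c_\gamma$), and that the assignment $L\mapsto L/L_{\min}$, together with its inverse, is a morphism of varieties compatible with the scheme structure, not merely a bijection on points — this is where the functoriality of $t_\dtb,\tc_\dtb$ (remarked after Definition \ref{D:stabilization} and in Theorem \ref{T:torsion}) and the identification with fixed torsion subobjects $L_{\min}=t_{n\delta}(M)$, $L_{\max}=\tc_{n\delta}(M)$ from Lemma \ref{L:L=tf} do the work. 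The only other point requiring care is that a subrepresentation $L$ with $\delta(\dv L)=f_M(\delta)$ automatically has $L/L_{\min}$ \emph{and} $L_{\max}/L$ both $\delta$-semistable, which is exactly the "moreover" clause of Theorem \ref{T:maxminsub} applied to the chain $L_{\min}\subseteq L\subseteq L_{\max}$ in $\mc{L}(\delta,M)$.
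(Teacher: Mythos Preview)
Your argument is correct and follows the same route as the paper's proof. The obstacles you flag at the end are illusory, and seeing why simplifies the write-up considerably: membership in $\mc{L}(\delta,M)$ depends only on $\dv L$ (it is the condition $\delta(\dv L)=f_M(\delta)$), so for $\gamma$ on the facet the ``locus'' $\{L\in\Gr_\gamma(M):L\in\mc{L}(\delta,M)\}$ is \emph{all} of $\Gr_\gamma(M)$ --- no semicontinuity or open-and-closed argument is needed, and $L\mapsto L/L_{\min}$ is then the standard variety isomorphism $\Gr_\gamma(M)\cong\Gr_{\gamma-\gamma_{\min}}(\dtb^\perp(M))$ coming from a fixed $L_{\min}\subseteq L_{\max}$. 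Conversely, every subrepresentation of $\dtb^\perp(M)$ of dimension $\gamma-\gamma_{\min}$ is automatically $\delta$-semistable (since $\dtb^\perp(M)$ is $\delta$-semistable and $\delta(\gamma-\gamma_{\min})=0$), so the short lemma you anticipate is immediate. With these observations your chain of identifications is exactly the paper's: $\Gr_\gamma(M)\cong\Gr_{\gamma-\gamma_{\min}}(\dtb^\perp(M))=\bigsqcup_{m:\,\iota_\delta(m)=\gamma-\gamma_{\min}}\Gr_m^\delta(\dtb^\perp(M))\cong\bigsqcup_m\Gr_m(\pi_\delta(M))$, then sum Euler characteristics.
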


\begin{proof} The facet $\sf \Lambda$ is given by the convex hull of the dimension vectors of all elements in $\mc{L}(\delta,M)$.
	Recall the torsion-pair sequence 
	$$0\to t_\dtb(M) \to M \to f_\dtb(M) \to 0.$$
	Every subrepresentation $L$ in $\mc{L}(\delta,M)$ lie between $L_{\min}=t_\dtb(M)$ and $L_{\max}=\check{t}_\dtb(M)$. 
	So the natural map $L\mapsto L/L_{\min}$ give an isomorphism $\Gr_{\gamma} (M) \cong \Gr_{\gamma-\gamma_{\min}}(L_{\max}/L_{\min})$ where $\gamma_{\min}=\dim L_{\min}$.	
	Moreover, $L/L_{\min}$ is a $\delta$-semistable subrepresentation of $\dtb^\perp(M) = L_{\max}/L_{\min}$.
	So $\Gr_{\gamma'}(L_{\max}/L_{\min})$ is stratified according to the multiplicity $m$:
	\begin{equation} \label{eq:mstr} \Gr_{\gamma'}(L_{\max}/L_{\min}) = \bigsqcup_{m\mid \iota_{\delta}(m)=\gamma'} \Gr_{m}^\delta(L_{\max}/L_{\min}). \end{equation}
Finally we recall from Corollary \ref{C:red} that $\Gr_m^\delta( \dtc^\perp(M) ) \cong \Gr_{m}( \pi_\delta (M) )$.
Now we have that 
\begin{align*} \b{y}^{\gamma_{\min}} \iota_{\delta} (F_{\pi_\delta(M)}) &= \b{y}^{\gamma_{\min}} \iota_\delta\Big(\sum_{m} \chi\big(\Gr_m^\delta(L_{\max}/L_{\min})\big) \b{y}^m \Big)  && \text{by Corollary \ref{C:red}} \\
 &=  \b{y}^{\gamma_{\min}} \sum_{\gamma\in \sf\Lambda} \sum_{m \mid \iota_\delta(m)=\gamma-\gamma_{\min}} \chi\big(\Gr_m^\delta(L_{\max}/L_{\min})\big) \b{y}^{\gamma - \gamma_{\min}}  \\
 &=  \sum_{\gamma\in\sf\Lambda} \chi\Bigg( \bigsqcup_{m \mid \iota_\delta(m)=\gamma-\gamma_{\min}} \Gr_m^\delta(L_{\max}/L_{\min})\Bigg) \b{y}^{\gamma}  \\
 &=  \sum_{\gamma \in \sf\Lambda } \chi\big(\Gr_{\gamma-\gamma_{\min}}(L_{\max}/L_{\min})\big) \b{y}^{\gamma} && \text{by \eqref{eq:mstr}} \\
 &=  \sum_{\gamma \in \sf\Lambda } \chi\left(\Gr_\gamma(M)\right) \b{y}^{\gamma}.
\end{align*}
\end{proof}

\begin{question} \label{q:v-1} Suppose that a representation $M$ is supported on each vertex of $Q$.
	Let $\delta$ be an outer normal vector of $\N(M)$.
	Suppose that $\pi_{\delta}(M)$ is supported on the quiver $Q'$, which is also the quiver of $A_{\delta,\dtb^\perp(M)}$.
	It is rather easy to show that $Q'$ has at least $|Q_0|-1$ vertices.	
    We have an example where $Q'$ has more than $|Q_0|-1$ vertices.	
    Is it true that if $M$ is the cokernel of a general presentation, then the quiver $Q'$ has exactly $|Q_0|-1$ vertices?	
\end{question}

A main source of our examples come from the theory of quivers with potentials \cite{DWZ1,DWZ2}.
One advantage is that we have an algorithm to compute the $F$-polynomial for mutation-reachable representations.
The algorithm was implemented in \cite{Ke}.

\begin{example} \label{ex:QPfaces} Consider the quiver $\cyclicfourone{1}{4}{2}{3}{a}{b}{c}$ with potential $abc$.
	Let $M$ be the representation obtained from the sequence of mutations $(3,4,1,2)$. 
	Its $\delta$-vector is $(-1,1,1,0)$, and its dimension vector is $(2,1,3,1)$.
	We find its $F$-polynomial
\begin{align*}
	F_M = 1 &+ y_3  + y_3y_4 +2y_1  + 4y_1y_3 + 2y_1y_3y_4 + 2y_1y_3^2 + 2y_1y_3^2y_4 + y_1^2   + 3y_1^2y_3\\
	& + y_1^2y_3y_4 + 3y_1^2y_3^2 + 2y_1^2y_3^2y_4 + y_1^2y_3^3 + y_1^2y_3^3y_4 + y_1^2y_2y_3^2y_4 + y_1^2y_2y_3^3y_4. 
\end{align*}

In the following table, we list all $7$ facets with their normal vectors $\delta$.
Other data includes $f_M(\delta),\ \fc_M(-\delta)$, and the dimension vectors of $t_\dtb(M)$ and $\tc_\dtb(M)$. 
The default for $\dv t_\dtb(M)$ is $0$ and the default for $\dv \tc_\dtb(M)$ is $\dv M$.
\begin{center}
\begin{tabular}{l*{5}{c}r}
	No.              & $\delta$ & $f_M(\delta)$ & $\fc_M(-\delta)$ & $\dv t(M)$ & $\dv \tc(M)$  \\
	\hline
	1 & $(-1,2,0,0)$ & 0 & 0 & - &  -  & \\
	2 & $(0,1,0,-1)$ & 0 & 0 & - &  -  & \\
	3 & $(0,-1,0,0)$ & 0 & 1 & - & $(2,0,3,1)$ & \\ 
	4 & $(0,1,-1,1)$ & 0 & 1 & - &  $(2,1,2,1)$ &   \\ %
	5 & $(1,0,0,0)$ & 2 & 0 & $(2,0,0,0)$ &  -  &   \\ %
	6 & $(0,0,0,1)$ & 1 & 0 & $(0,0,1,1)$ &  - &  \\
	7 & $(-1,0,1,0)$ & 1 & 0 & $(0,0,1,0)$ &  -  & 
\end{tabular}
\end{center}


\noindent Here are the restrictions of $F_M$ to these facets:
\begin{align*} 
&  1+y_3 + y_3y_4 + y_1^2y_2y_3^2y_4 + y_1^2y_2y_3^3y_4 \\
& (1+y_3)(1 + 2y_1 + y_1^2  + 2y_1y_3 + 2y_1^2y_3 + y_1^2y_3^2 + y_1^2y_2y_3^2y_4)\\
& (1+y_1 + y_1y_3)^2(1+y_3 + y_3y_4) \\
& 1 + 2y_1 + y_1^2 +  y_3y_4  + 2y_1y_3y_4 + y_1^2y_3y_4 + y_1^2y_2y_3^2y_4 \\
& y_1^2(1+y_3)(1+2y_3+ y_3^2 + y_3y_4 + y_3^2y_4  + y_2y_3^2y_4) \\
&  y_3y_4(1 + 2y_1 + 2y_1y_3  + y_1^2 + 2y_1^2y_3 + y_1^2y_3^2 + y_1^2y_2y_3 + y_1^2y_2y_3^2)\\
& y_3(1+y_4 + 2y_1y_3  + 2y_1y_3y_4  + y_1^2y_3^2 + y_1^2y_3^2y_4 + y_1^2y_2y_3^2y_4)
\end{align*}

Let $T$ be the unique indecomposable representation of dimension $(2,1,1,0)$, and $T_{i,j}$ be the unique indecomposable representation of dimension $e_i+e_j$.
We list below the quiver of $A_{\delta, \dtb^\perp(M)}$ and the dimension vector of $\pi_{\delta} (M)$, and the polynomials $F_{\pi_{\delta} (M)}$.
The variable $y_i$ corresponds to the vertex $S_i$. The variable $y_{ij}$ corresponds to the vertex $T_{ij}$, and $y_{t}$ corresponds to the vertex $T$.  
The relation for the first quiver is also given by (a generic) potential. There are no relations for the rest of the quivers. 
The facet no. 3,\ 5, and 6 are rather trivial so we left them to readers.
\begin{align*}
&1 && \oneonetwo{T}{S_3}{S_4}  &&  \oneonetwo{1}{2}{1} && 1+y_3 + y_3y_4 + y_ty_3y_4 + y_ty_3^2y_4 \\
&2 && \twoone{T_{24}}{S_3}{S_1}  &&  \twoone{1}{3}{2} &&  (1+y_3)(1+2y_1+ y_1^2  + 2y_1y_3 + 2y_1^2y_3 + y_1^2y_3^2 + y_1^2y_{24}y_3^2) \\  
&4 && \twoonea{S_1}{T_{23}}{T_{34}}  &&  \twoonea{2}{1}{1} && 1+2y_1 + y_1^2 + y_{34} + 2y_1y_{34} + y_1^2y_{34} + y_1^2y_{23}y_{34}\\
&7 && \twoonea{T_{13}}{S_2}{S_4}  &&  \twoonea{2}{1}{1} && 1+2y_{13} + y_{13}^2 + y_{4} + 2y_{13}y_{4} + y_{13}^2y_{4}  + y_{13}^2y_{2}y_{4}
\end{align*}

\end{example}

\begin{example}\label{ex:heneq0}  It is possible that both $f_M(\delta)$ and $\fc_M(-\delta)$ do not vanish.
	In such situation, both	$t_\dtb(M)$ and $\tc_\dtb(M)$ must be nontrivial.
	We shall see in Theorem \ref{T:Newton} that this is impossible for a general representation of a quiver.
	For the same quiver with potential, let $M$ be the representation obtained from the sequence of mutations $(2,3,4,1,2,3)$.
	Its $\check{\delta}$-vector is $(1,-1,1,1)$, and its dimension vector is $(2,5,2,2)$.
	One can easily check that $\N(M)$ has a facet with normal vector $\delta=(-1,0,1,0)$, and $f_M(\delta)=\fc_M(-\delta)=1$.
	We find that $\dv t_\dtb(M)= (0,0,1,0)$ and $\dv \tc_\dtb(M)=(1,4,2,2)$.
	The restriction of $F_M$ to this facet is  
	$$y_3(1+y_4 + y_2y_4)^2(1 + y_1y_3 + 2y_1y_2y_3 + y_1y_2^2y_3).$$
	\begin{align*}
	& \twoonea{T_{13}}{S_2}{S_4}  &&  \twoonea{1}{4}{2} &&  (1+y_4 + y_2y_4)^2(1 + y_{13} + 2y_{13}y_2 + y_{13}y_2^2)
	\end{align*}
	
\end{example}

\begin{remark}[on arbitrary faces] \label{R:anyfaces} Theorem \ref{T:faces} allows us to interpret the restriction of $F_M$ to any (not necessarily codimension-1) face ${\sf \Lambda}$ by induction.
	Alternatively one can choose some $\delta$ such that ${\sf \Lambda}$ is supported on $H=\{\gamma \mid \delta(\gamma) =h\}$ for some $h$ and $H \cap \N(M) = {\sf \Lambda}$. 
	Then the same proof gives the same formula.
\end{remark}

\subsection{Acyclic Case}
We come back to the setting of quivers without oriented cycles. 
In this subsection we mostly deal with a general representation $M$ of an acyclic quiver $Q$.
It is good to keep in mind that $f_M(\delta)=\hom(\delta,M)$ (see Remark \ref{r:n=1}).

Recall the sets ${\sf \Delta}_0(M)$ and ${\sf \Delta}_1(M)$ (see Section \ref{ss:Ftrop}). 
By Remark \ref{r:n=1}.(2) we have that \begin{align} \label{eq:Delta01}
{\sf \Delta}_0(M)&=\{\delta\in \mb{Z}^{Q_0} \mid {\hom}(\delta,M)=0 \};\\
{\sf \Delta}_1(M)&=\{\delta\in \mb{Z}^{Q_0} \mid {\ext}(\delta,M)=0 \}.
\end{align}
Let ${\sf R}_i(M)$ be the set of extremal rays in the cone $\mb{R}_{\geq 0}{\sf \Delta}_i(M)$ ($i=0,1$).
Throughout we will represent an element in ${\sf R}_i(M)$ by an indivisible integral vector.
Such a vector must be an indecomposable $\delta$-vector.

\begin{theorem} \label{T:Newton} Let $M$ be a general representation in $\rep_\alpha(Q)$. Then the outer normal vectors of $\N(M)$ are precisely ${\sf R}_0(M) \cup {\sf R}_1(M)$, 
	and $\N(M)$ can be presented as
$$\{ \gamma\in\mb{R}^{Q_0} \mid \delta_0(\gamma) \leq 0 \text{ for $\delta_0 \in {\sf R}_0(M)$ and } {\delta}_1(\alpha-\gamma) \geq 0 \text{ for ${\delta}_1 \in {\sf R}_1(M)$} \}.$$
\end{theorem}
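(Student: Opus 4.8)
The plan is to reduce the statement to the single assertion that \emph{no facet of $\N(M)$ avoids both the vertex $0$ and the vertex $\alpha$}, and then to prove that assertion — the only substantial point — by induction on $|Q_0|$ via the $\delta$-stable reduction of Theorem~\ref{T:faces}. First I would record that $0=\dv 0$ and $\alpha=\dv M$ are vertices of $\N(M)$: they are lattice points of $\N(M)$ and also the two extreme corners of the box $\prod_i[0,\alpha(i)]$ that contains $\N(M)$. Since $M$ is general over an acyclic quiver, $f_M(\delta)=\hom(\delta,M)$ for every $\delta$ by Remark~\ref{r:n=1}.(2), and $\delta(\alpha)=\hom(\delta,M)-\e(\delta,M)$. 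Hence $\delta$ lies in the normal cone of $\N(M)$ at $0$ iff $\hom(\delta,M)=0$, and in the normal cone at $\alpha$ iff $\e(\delta,M)=0$; by \eqref{eq:Delta01} and Corollary~\ref{C:HE0} these normal cones are exactly $\cone{\sf \Delta}_0(M)$ and $\cone{\sf \Delta}_1(M)$. Because the outer normals of the facets through a vertex are precisely the extremal rays of the normal cone there, ${\sf R}_0(M)$ (resp. ${\sf R}_1(M)$) consists exactly of the facet normals of $\N(M)$ through $0$ (resp. through $\alpha$), and for such a facet the defining inequality, read off from Corollary~\ref{C:allfaces}, is $\delta_0(\gamma)\le\hom(\delta_0,M)=0$ (resp. $\delta_1(\gamma)\le\hom(\delta_1,M)=\delta_1(\alpha)$, i.e. $\delta_1(\alpha-\gamma)\ge0$). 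Thus, granting the key assertion, the facet normals of $\N(M)$ are ${\sf R}_0(M)\cup{\sf R}_1(M)$ and the displayed presentation of $\N(M)$ follows.

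For the inductive step of the key assertion, suppose $\delta$ is the outer normal of a facet ${\sf \Lambda}$ of $\N(M)$ with $\hom(\delta,M)>0$ and $\e(\delta,M)>0$; I want a contradiction. Writing $L_{\min}=t_\dtb(M)$, $L_{\max}=\tc_\dtb(M)$, the inequalities $\delta(\dv L_{\min})=\hom(\delta,M)>0$ and $\delta(\alpha)<\hom(\delta,M)$ give $0\subsetneq L_{\min}\subseteq L_{\max}\subsetneq M$, so $\dtb^\perp(M)=L_{\max}/L_{\min}$ is a proper $\delta$-semistable subquotient of $M$. By Theorem~\ref{T:faces}, ${\sf \Lambda}$ is the affine image of $\N(\pi_\delta(M))$ under $\gamma\mapsto\dv L_{\min}+\iota_\delta(\gamma)$. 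Now $N:=\pi_\delta(M)=\rho_\delta(\dtb^\perp(M))$ is a general representation of the quiver $Q'$ of $A_{\delta,\dtb^\perp(M)}$: the algebra is hereditary (for $A=\mb{C}Q$ the $A_\infty$ structure maps of the $\delta$-stable simples land in $\Ext^2=0$, so $r_{\mc V}=0$), $Q'$ is acyclic, and $Q'$ has at most $|Q_0|-1$ vertices because the linearly independent classes of the $\delta$-stable simples all lie in the proper subspace $\delta^\perp$; the genericity of $N$ is inherited from that of $M$ since, by the constructions of \cite{Ft}, a general subrepresentation and a general quotient of a general representation are again general, so $\dtb^\perp(M)$ is general in $\ss{\delta}{Q}$. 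By the inductive hypothesis the theorem holds for $N$, so every facet of $\N(N)$ passes through $0$ or $\dv N$; transporting through $\iota_\delta$, which is injective with image in $\delta^\perp$, every facet of the polytope ${\sf \Lambda}$ passes through $\dv L_{\min}$ or $\dv L_{\max}$.

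I expect the main obstacle to be converting this internal structure of ${\sf \Lambda}$ — namely that ${\sf \Lambda}$ is the intersection of its tangent cones at $\dv L_{\min}$ and $\dv L_{\max}$ — into a contradiction with the assumption that ${\sf \Lambda}$ is a facet of $\N(M)$ avoiding both $0$ and $\alpha$; equivalently, the crux is to show directly that for a \emph{general} $M$ no facet normal can have $\hom(\delta,M)$ and $\e(\delta,M)$ both positive (this is exactly the phenomenon ruled out in Example~\ref{ex:heneq0}, which uses a non-general $M$). A promising alternative to pushing the induction through is to attack the incompatibility head on: a facet normal is an indecomposable $\delta$-vector by Corollary~\ref{C:indnormal}, so a general presentation $d\in\PHom(\delta)$ has no negative summand, its cokernel $C$ satisfies $\hom(\delta,M)=\hom(C,M)\ne0$ and $\e(\delta,M)=\Ext(C,M)\ne0$, and one derives a contradiction by producing a subrepresentation of $M$ whose dimension vector lies in $\N(M)$ but strictly outside ${\sf \Lambda}$, using genericity together with the characterisation of vertex subrepresentations (Lemma~\ref{L:vertex}, Proposition~\ref{P:vhoms}).

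Finally, the base case $|Q_0|=1$ is immediate, since then $\N(M)=[0,\alpha]$ with ${\sf R}_0(M)=\{-e_1\}$ and ${\sf R}_1(M)=\{e_1\}$; and once the key assertion is in hand, combining it with the identifications of the first paragraph gives both conclusions of the theorem. (The last sentence of the theorem, about rigid $M$, is then a separate matter handled in Corollary~\ref{C:Newton} by tracking which $\Coker(\delta)$ occur as $\delta$ ranges over ${\sf R}_0(M)\cup{\sf R}_1(M)$.)
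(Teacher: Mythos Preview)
Your first paragraph is clean and correct: identifying the normal cones at $0$ and $\alpha$ with $\cone{\sf\Delta}_0(M)$ and $\cone{\sf\Delta}_1(M)$ reduces the whole theorem to the single claim that every facet of $\N(M)$ contains $0$ or $\alpha$, equivalently that no facet normal has both $\hom(\delta,M)>0$ and $\e(\delta,M)>0$.

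The inductive attack on that claim, however, does not close, and you essentially concede this. Knowing that every codimension-one face of the facet ${\sf\Lambda}$ passes through $\dv L_{\min}$ or $\dv L_{\max}$ says nothing about whether ${\sf\Lambda}$ itself, as a face of $\N(M)$, can avoid $0$ and $\alpha$; there is no mechanism here for propagating the information back up a dimension. There are also two unproven assertions feeding the induction: that $\pi_\delta(M)$ is a \emph{general} representation of $Q'$ (this is not a consequence of the cited constructions, and is in fact delicate), and that $Q'$ is acyclic (true, but it needs the argument that a length hereditary abelian $\mathbb{C}$-category with finitely many simples is $\rep$ of an acyclic quiver). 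Your alternative sketch at the end is also off target: producing a subrepresentation whose dimension vector lies in $\N(M)$ but outside ${\sf\Lambda}$ is no contradiction at all, since ${\sf\Lambda}$ is only a facet.

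The paper's proof uses a completely different device, and it is worth internalising. Rather than stabilising $M$ with respect to $\delta$, one takes the generic cokernel $C=\Coker(\delta)$ (using \cite{IOTW} to ensure the general presentation is injective when $\delta\neq -e_i$) and applies the torsion pair $(t,f)$ to $C$ with respect to the weight $\delta_\alpha=\innerprod{\alpha,-}$. The dual of Lemma~\ref{L:sub=h} forces $\Hom(t(C),M)=0$ and $\Ext(f(C),M)=0$, so the $\delta$-vectors $\delta_t,\delta_f$ of $t(C),f(C)$ lie in ${\sf\Delta}_0(M)$ and ${\sf\Delta}_1(M)$ respectively, and $\delta=\delta_t+\delta_f$. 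Then for any $\gamma$ satisfying the inequalities indexed by ${\sf\Delta}_0(M)\cup{\sf\Delta}_1(M)$ one has
\[
\delta(\gamma)=\delta_t(\gamma)+\delta_f(\gamma)\le 0+\hom(\delta_f,M)\le \hom(C,M)=\hom(\delta,M),
\]
so the inequality for $\delta$ is redundant. This shows $\N(M)=\N'(M)$ directly, without ever asserting $\delta\in{\sf\Delta}_0(M)\cup{\sf\Delta}_1(M)$ for an arbitrary facet normal; that conclusion is then read off a posteriori from the equality. The key idea you are missing is this switch of roles: decompose $C$, not $M$.
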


\begin{proof} Let us denote the above given set by $\N'(M)$. By Corollary \ref{C:allfaces} we knew that $\N(M) \subseteq \N'(M)$.
Since ${\sf R}_i(M)$ is the set of extremal rays in the cone $\mb{R}_{\geq 0}{\sf \Delta}_i(M)$ ($i=0,1$), the set $\N'(M)$ is equal to
$$\{ \gamma \in\mb{R}^{Q_0} \mid \delta_0(\gamma) \leq 0 \text{ for $\delta_0 \in {\sf \Delta}_0(M)$ and } {\delta}_1(\alpha-\gamma) \geq 0 \text{ for ${\delta}_1 \in {\sf \Delta}_1(M)$} \}.$$	
Note that for ${\delta}_1 \in {\sf \Delta}_1(M)$, ${\delta}_1(\alpha-\gamma) \geq 0$ is equivalent to that ${\delta}_1(\gamma) \leq \hom(\delta_1,M)$.
Next we will show that $\N(M) \supseteq \N'(M)$.

Let $\delta$ be any outer normal vector of $\N(M)$. By Corollary \ref{C:indnormal} $\delta$ is generically indecomposable.
Then either $\delta=-{e}_i$ or $\PHom(\delta)$ is generically injective by \cite[Theorem 6.3.1]{IOTW}.
Clearly $-e_i \in {\sf \Delta}_0(M)$ (but $\notin {\sf \Delta}_1(M)$).
For the latter case, let $C$ be the general cokernel of $\PHom(\delta)$. 
Let $\delta_\alpha=\innerprod{\alpha,-}$ be the $\delta$-vector corresponding to $\alpha$.
Consider the torsion-pair sequence of $C$ given by the functor $(t,f)$ for the weight $\delta_\alpha$ 
$$0\to t(C) \to C \to f(C)\to 0.$$
	Then by the dual of Lemma \ref{L:sub=h} (see Remark \ref{R:dual}) we have that
	 $$\Hom(t(C),\delta_\alpha)=0 \text{ and } \Ec(f(C),\delta_\alpha)=0.$$
So 
\begin{equation} \label{eq:van_tf} \Hom(t(C),M)=\Ext(f(C),M)=0.
\end{equation}
Thus we get a long exact sequence
	$$0\to \Hom(f(C),M) \to \Hom(C,M) \to \Hom(t(C),M) \to \Ext(f(C),M) = 0.$$
Let $\delta_t$ and $\delta_f$ be the $\delta$-vector of the minimal presentation of $t(C)$ and $f(C)$.
Since $\PHom(\delta)$ is generically injective, we have that $\delta = \delta_t + \delta_f$.
By \eqref{eq:van_tf} $\delta_t \in {\sf \Delta}_0(M)$ and $\delta_f \in {\sf \Delta}_1(M)$.

Let $\gamma\in \N'(M)$, so it satisfies the inequalities
\begin{equation} \label{eq:N'} \delta_{t}(\gamma) \leq \hom(\delta_{t},M) \text{ and } \delta_{f}(\gamma) \leq \hom(\delta_{f},M). \end{equation}
We claim that the inequality $\delta(\gamma) \leq \hom(\delta,M)$ is implied by \eqref{eq:N'} so that $\gamma \in \N(M)$.
This is because 
$$\delta(\gamma) = \delta_t(\gamma) + \delta_f(\gamma) \leq \hom(\delta_{t},M)+\hom(\delta_{f},M) \leq \hom(t(C),M)+\hom(f(C),M) = \hom(C,M).$$
We thus get the equality $\N(M)=\N'(M)$ and in particular that all outer normal vectors of $\N(M)$ are in ${\sf R}_0(M) \cup {\sf R}_1(M)$.

We remain to show that all elements in ${\sf R}_0(M) \cup {\sf R}_1(M)$ are outer normal vectors of $\N(M)$.
If $\delta\in {\sf R}_0(M)$ is not a normal vector, then ${\mc L}(\delta,M)$ has codimension $>1$ and contains $0$.
Let ${\sf \Lambda}_1$ and ${\sf \Lambda}_2$ be any two facets containing ${\mc L}(\delta,M)$.
Since ${\mc L}(\delta,M)$ contains $0$, the normal vectors of ${\sf \Lambda}_1$ and ${\sf \Lambda}_2$ are in $\Delta_0$.
Then $\delta$ is a positive combination of these two normal vectors, which contradicts $\delta$ being extremal.
A similar argument can treat the case when $\delta\in {\sf R}_1(M)$.

\end{proof}

\begin{remark} \label{R:Newton} (1). Theorem \ref{T:Newton} is false for general quivers with potentials, even for $\E$-rigid representations. Example \ref{ex:heneq0} is a counterexample.
	
(2). The intersection $\mb{R}_{\geq 0}{\sf\Delta}_0(M) \cap \mb{R}_{\geq 0}{\sf\Delta}_1(M)$ is again a polyhedral cone. Indeed, it is the section of $\mb{R}_{\geq 0}{\sf\Delta}_0(M)$ or $\mb{R}_{\geq 0}{\sf\Delta}_1(M)$ by the hyperplane $\{\delta\mid \delta(\alpha)=0\}$. When $M$ is a general representation of $Q$ of dimension $\alpha$, it is the GIT cone of $\rep_\alpha(Q)$ studied by many authors (e.g. \cite{DW2}).
	Now we see that the intersection is a common face of $\mb{R}_{\geq 0}{\sf\Delta}_0(M)$ and $\mb{R}_{\geq 0}{\sf\Delta}_1(M)$.
\end{remark}

We shall show further that if $M$ is rigid, then the rays in ${\sf R}_0(M)$ and ${\sf R}_1(M)$ are either $-{e}_i$ or correspond to real Schur roots.
We first prove two interesting lemmas.
Recall the definition of $\mc{L}(\delta,M)$ before Theorem \ref{T:maxminsub}.
In this subsection we mostly ignore all the subscripts $\dtb$ for the functors $t,f,\tc$, and $\fc$.

\begin{lemma} \label{L:perp} Let $M$ be a general representation of $Q$ of dimension $\alpha$. Then $t(M) \perp L/t(M)$ and $\tc(M)/L \perp \fc(M)$ for any $L\in \mc{L}(\delta,M)$. 
\end{lemma}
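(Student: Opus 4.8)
The plan is to establish the two perpendicularity statements by exploiting the vanishing results already packaged into Theorem \ref{T:torsion} together with the dimension-count identity $\delta(\dv M) = \hom(\delta,M) - \e(\delta,M)$ and the fact (Remark \ref{r:n=1}.(2)) that for a general representation $M$ of an acyclic quiver we may take $n=1$, so that $f_M(\delta)=\hom(\delta,M)$ and $\mc{L}(\delta,M)=\{L\subseteq M\mid \delta(\dv L)=\hom(\delta,M)\}$. Write $L_{\min}=t(M)$, $L_{\max}=\tc(M)$. For the first claim I would work with the short exact sequence $0\to t(M)\to L\to L/t(M)\to 0$, whose factors are honest subrepresentations of $M$ lying in $\mc{L}(\delta,M)$; I want to show $\hom(t(M),L/t(M))=\ext(t(M),L/t(M))=0$ where now $\hom,\ext$ denote the generic values attained by the general $M$ and its general sub- and sub-quotient representations (using \cite[Lemma 7.13]{Ft} to arrange that $t(M)$, $L$, $M/L$, $L/t(M)$ are all general of their respective dimensions, exactly as in the second proof of Theorem \ref{T:acyclicv}).

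First I would get the $\Hom$-vanishing: by Theorem \ref{T:torsion}, $\Hom(t(M),M/L)=0$ for every $L\in\mc{L}(\delta,M)$; since $L/t(M)\hookrightarrow M/t(M)$ and $M/L$ is a quotient of $M/t(M)$, and since $L/t(M)$ is $\delta$-semistable (Theorem \ref{T:maxminsub}) while $t(M)=t_{\dtb}(M)\in\mc{T}(\dtb)$ has the property that $\Hom(t(M),N)$ vanishes for $N$ in the torsion-free class — more directly, $L/t(M)$ is a subrepresentation of $M/t(M)=f(M)$, which lies in $\mc{F}(\dtb)$, so $\hom(\delta,L/t(M))=0$, and then I can realize $\Hom(t(M),L/t(M))$ as a subspace of $\Hom(t(M), \text{something in }\mc{F})$. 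Cleanly: $L/t(M)\subseteq f(M)$ and $\Hom(t(M),f(M))=0$ by the torsion-pair axiom (Proposition \ref{P:relations}, $tf=0$ gives $\Hom(t(M),f(M))=0$), hence $\Hom(t(M),L/t(M))=0$. Next, the $\Ext$-vanishing: apply $\Hom(t(M),-)$ to $0\to L/t(M)\to f(M)\to M/L\to 0$ to get $\Hom(t(M),M/L)\to \Ext(t(M),L/t(M))\to \Ext(t(M),f(M))$; the left term is $0$ by Theorem \ref{T:torsion}, so it suffices to kill $\Ext(t(M),f(M))$. For that I would use genericity: from the sequence $0\to t(M)\to M\to f(M)\to 0$ with $M$ general, apply $\Hom(-,f(M))$ and $\Hom(t(M),-)$ — or more efficiently combine $\hom(t(M),M)=\hom(t(M),t(M))$ (which follows from $\Hom(t(M),f(M))=0$) with the Euler form identity to force $\ext(t(M),t(M))$ and $\ext(t(M),f(M))$ to vanish, using that $M$ rigid-or-general makes the relevant $\Ext$ groups attain their generic (minimal) value $0$ along the lines of Schofield's results invoked in \cite{S2}. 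The second claim, $\tc(M)/L\perp \fc(M)$, follows by the dual argument (Remark \ref{R:dual}): use $\Hom(L,\fc(M))=0$ from Theorem \ref{T:torsion}, the fact that $\tc(M)/L$ is $\delta$-semistable, and that $\tc(M)=\tc_{\dtb}(M)\in\check{\mc{T}}(\dtb)$ with $\Hom(\tc(M),\fc(M))=0$ coming from $\fc\tc$-type relations in Proposition \ref{P:relations}, then run the same $\Ext$-vanishing via a long exact sequence.

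The main obstacle I anticipate is the $\Ext$-vanishing rather than the $\Hom$-vanishing: the $\Hom$ parts are essentially immediate from the torsion-theoretic bookkeeping already set up, but to get $\ext(t(M),f(M))=0$ (equivalently $\Ext(t(M),L/t(M))=0$) I genuinely need that $M$ is a general representation, so that $t(M)$ and $f(M)$ are themselves general of their dimension vectors and the short exact sequence $0\to t(M)\to M\to f(M)\to 0$ is a generic one — this is where I would lean on \cite[Lemma 7.13]{Ft} and on Schofield's description of generic $\Ext$ (as already used in the proof of Theorem \ref{T:acyclicv}), checking carefully that the canonical decomposition / general-position hypotheses transfer to the torsion and torsion-free parts. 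Once that is in place, the Euler form then converts the $\hom$ equalities into $\ext$ equalities and both perpendicularities drop out.
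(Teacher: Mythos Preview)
Your overall architecture matches the paper's exactly: get $\Hom(t(M),L/t(M))=0$ from the torsion pair (since $L/t(M)\hookrightarrow f(M)=M/t(M)$ and $\Hom(t(M),f(M))=0$), then apply $\Hom(t(M),-)$ to $0\to L/t(M)\to M/t(M)\to M/L\to 0$ and use $\Hom(t(M),M/L)=0$ from Theorem \ref{T:torsion} to reduce to showing $\Ext(t(M),M/t(M))=0$. The dual half is handled symmetrically, just as the paper says.

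The one place where your write-up is looser than it should be is the $\Ext$-vanishing. You propose to pass through \cite[Lemma 7.13]{Ft} to arrange that $t(M)$ and $f(M)$ are themselves general of their dimension vectors; this is both unnecessary and not obviously available (nothing guarantees that $t_\dtb(M)$ is general in $\rep_\gamma(Q)$ for general $M$). The paper's route is cleaner and avoids this issue entirely: since $M$ is general of dimension $\alpha$ and has a subrepresentation of dimension $\gamma=\dv t(M)$, Lemma \ref{L:subrep}(1) gives $\ext(\gamma,\alpha-\gamma)=0$, hence $\innerprod{\gamma,\alpha-\gamma}=\hom(\gamma,\alpha-\gamma)\geq 0$. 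Now the Euler form computes $\hom(t(M),M/t(M))-\ext(t(M),M/t(M))=\innerprod{\gamma,\alpha-\gamma}\geq 0$; since the $\Hom$ term is already $0$, the $\Ext$ term is $\leq 0$ and therefore vanishes. So the only genericity input you need is Schofield's criterion (Lemma \ref{L:subrep}(1)) applied to $M$ itself, not any statement about $t(M)$ or $f(M)$ being general. Your ``or more efficiently'' alternative via the Euler form was the right instinct; you were just missing this one ingredient to close it.
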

\begin{proof} We only show that $t(M) \perp L/t(M)$ because $\tc(M)/L \perp \fc(M)$ can be treated by the dual argument. 
	By Theorem \ref{T:torsion} we have that
	$$\Hom(t(M),M/t(M))=0\ \Rightarrow\ \Hom(t(M),L/t(M))=0.$$
	On the other hand, we have $\ext(\gamma,\alpha-\gamma)=0$ by Lemma \ref{L:subrep} where	$\gamma=\dv t(M)$.
	So $\innerprod{\gamma,\alpha-\gamma}=0$ and thus $\Ext(t(M),M/t(M))=0$.
	Apply the functor $\Hom(t(M),-)$ to the short exact sequence 
	$$0\to L/t(M) \to M/t(M) \to M/L\to 0$$
	we get
	$$0= \Hom(t(M),M/L)\to \Ext(t(M),L/t(M)) \to \Ext(t(M),M/t(M))=0.$$
	The vanishing of $\Hom(t(M),M/L)$ is due to Theorem \ref{T:torsion}.
	We conclude the vanishing of $\Ext(t(M),L/t(M))$ so $t(M) \perp L/t(M)$. 	
\end{proof}

\begin{remark} This is a quite surprising result. In particular, it implies that if $\delta$ is an indivisible outer normal vector of some facet of $\N(M)$ with $f_M(\delta)>0$, then the minimal element of this facet, namely $\gamma=\dv t(M)$, has weight $\innerprod{\gamma,-}$ equal to some multiple of $\delta$!
\end{remark}

\begin{lemma} \label{L:codim2} Let $M$ be a general representation in $\rep_\alpha(Q)$.
	If $\delta$ corresponds to an imaginary root and $\hom(\delta,M)>0$, then
	the convex hull of the dimension vectors in $\mc{L}(\delta,M)$ has codimension at least $2$.
	In particular, such a $\delta$-vector cannot be a normal vector of $\N(M)$.
\end{lemma}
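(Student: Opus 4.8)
The plan is to exploit the structural results about the functors $t_\dtb,f_\dtb,\tc_\dtb$ and the $\perp$-relation established in Lemma \ref{L:perp}, together with the fact that $\delta$ now corresponds to an imaginary root. First I would recall the three-factor filtration of $M$ by $t(M)$, $\dtb^\perp(M)=\tc(M)/t(M)$, and $\fc(M)=M/\tc(M)$, and note that $\dtb^\perp(M)$ is $\delta$-semistable of dimension on the hyperplane $\{\gamma\mid \delta(\gamma)=0\}$. Since $\hom(\delta,M)>0$, we have $f_M(\delta)>0$, so by the remark following Lemma \ref{L:perp} the weight $\innerprod{\gamma,-}$ attached to $\gamma=\dv t(M)$ is a positive multiple of $\delta$; in particular $\gamma$ is a genuine vertex of the facet/face $\mc{L}(\delta,M)$, and by Lemma \ref{L:perp} we have $t(M)\perp \dtb^\perp(M)$ and similarly $\dtb^\perp(M)\perp \fc(M)$. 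The key point I would then establish is that, because $\delta$ is an imaginary root, the $\delta$-semistable category $\ss{\delta}{Q}$ (equivalently $\dtb^\perp(M)$ as a module over the reduced algebra $A_{\delta,\dtb^\perp(M)}$) is itself of ``non-trivial'' type — more precisely, a general $\delta$-semistable representation of the relevant dimension is not rigid, so its module of self-extensions over the stable reduction is nonzero.

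The main step is to bound the dimension of the face $\mc{L}(\delta,M)$ from above. By Theorem \ref{T:faces} (or directly by the isomorphism $\Gr_\gamma(M)\cong\Gr_{\gamma-\gamma_{\min}}(\dtb^\perp(M))$ and the stratification \eqref{eq:mstr}), the face $\mc{L}(\delta,M)$ is, up to the translation by $\gamma_{\min}=\dv t(M)$ and the linear map $\iota_\delta$, the Newton polytope $\N(\pi_\delta(M))$ of the stable reduction. So I would reduce to showing: if the stable reduction $W:=\pi_\delta(M)$ (a module over $A_{\delta,\dtb^\perp(M)}$ whose quiver $Q'$ has at least $|Q_0|-1$ vertices by Question \ref{q:v-1}) is such that the underlying $\delta$-stable factors of $\dtb^\perp(M)$ form an imaginary root datum, then $\dim \N(W) \le |Q_0|-2$; equivalently $\N(W)$ is not full-dimensional inside the hyperplane $\delta^{\perp}$. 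The cleanest way to see this is that $\dtb^\perp(M)$ being $\delta$-semistable of imaginary-root dimension forces $\dv t(M)$ and $\dv\tc(M)$ to both differ from $0$ and $\alpha$ respectively — i.e. both $t(M)$ and $\fc(M)$ are nonzero — so $\mc{L}(\delta,M)$ lies strictly between $0$ and $\alpha$ and avoids at least one further facet; combined with the $\perp$-relations of Lemma \ref{L:perp}, one checks the affine span of $\{\dv L\mid L\in\mc{L}(\delta,M)\}$ is contained in the intersection of $\delta^\perp$ with at least one more supporting hyperplane coming from $t(M)$ or $\fc(M)$ (using that $\innerprod{\dv t(M),-}$ is a multiple of $\delta$ and similarly on the dual side). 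Hence the codimension is at least $2$. The final sentence then follows immediately: a normal vector of $\N(M)$ defines a facet, i.e. a face of codimension exactly $1$, so $\delta$ cannot be one.

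The hard part will be the precise bookkeeping that converts ``$\delta$ imaginary'' into ``the stable reduction $W$ has non-full-dimensional Newton polytope in $\delta^\perp$''. One has to rule out the degenerate possibility that $\mc{L}(\delta,M)$ is still a facet even though $t(M)$ and $\fc(M)$ are nonzero — this is where the $\perp$-statements of Lemma \ref{L:perp} and the identification of the weight of $t(M)$ with a multiple of $\delta$ are essential, since they pin down the extra supporting hyperplane. A secondary subtlety is making sure the quiver $Q'$ of the stable reduction genuinely has fewer than $|Q_0|$ ``free directions'' available to the Newton polytope; here I would invoke the general-representation hypothesis so that the stable factors are themselves generic in their dimension vectors and the Euler-form computations of Remark \ref{r:HE0} and Corollary \ref{C:HE0} apply verbatim to $W$.
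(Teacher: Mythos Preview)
Your proposal contains a genuine error that makes the argument circular. You invoke the remark following Lemma \ref{L:perp} to conclude that $\delta_\gamma:=\innerprod{\dv t(M),-}$ is a multiple of $\delta$; but that remark explicitly assumes $\delta$ is the normal vector of a \emph{facet}, which is exactly what you are trying to disprove. Worse, if $\delta_\gamma$ really were a multiple of $\delta$, then $\delta_\gamma^\perp=\delta^\perp$ and your ``one more supporting hyperplane coming from $t(M)$'' would not be a new constraint at all --- the argument collapses.

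The paper's proof runs in the \emph{opposite} direction and is much shorter. From Lemma \ref{L:perp} one has $t(M)\perp L/t(M)$ for every $L\in\mc{L}(\delta,M)$, so $\delta_\gamma(\dv L-\gamma)=\innerprod{\gamma,\dv L-\gamma}=0$; and of course $\delta(\dv L-\gamma)=0$ since $\delta(\dv L)=f_M(\delta)$ is constant on $\mc{L}(\delta,M)$. The key point is that $\delta$ and $\delta_\gamma$ are linearly \emph{independent}: if $\delta=c\,\delta_\gamma$ then (by nondegeneracy of the Euler form) the imaginary root underlying $\delta$ is proportional to $\gamma$, hence $\innerprod{\gamma,\gamma}\le 0$, contradicting $\delta(\gamma)=\hom(\delta,M)>0$. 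Thus the affine span of $\{\dv L:L\in\mc{L}(\delta,M)\}$ lies in a translate of $\delta^\perp\cap\delta_\gamma^\perp$, which has codimension $2$. The detour through stable reductions, $\pi_\delta(M)$, and the dimension of $\N(W)$ is unnecessary; once you see that the imaginary-root hypothesis is used precisely to make $\delta_\gamma$ \emph{not} a multiple of $\delta$, the lemma is immediate.
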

\begin{proof} Let $L$ be any element in $\mc{L}(\delta,M)$.
	Lemma \ref{L:perp} says that $L/t(M)$ is $\delta_\gamma$-stable where $\delta_\gamma = \innerprod{\gamma,-}$ is the weight of $t(M)$.
	In the meanwhile, $L/t(M)$ is also $\delta$-semistable by Theorem \ref{T:maxminsub}.
	Since $\delta$ corresponds to an imaginary root and $\delta(\gamma) = \hom(\delta,M)>0$, $\delta$ is not a multiple of $\delta_\gamma$. 
	So $\delta$ and $\delta_\gamma$ span a subspace of dimension 2, and they are both orthogonal to $\dv L/t(M)$.
	Hence the convex hull of $\mc{L}(\delta,M)$ has dimension at most $|Q_0|-2$.
\end{proof}

\begin{lemma}\label{L:inj} Suppose that $M$ is a rigid representation of $Q$.
	Let $\ep$ be any element in ${\sf R}_1(M)$, and $E=\Coker(\ep)$.
	Then $E$ is exceptional and $hE$ is a vertex subrepresentation of $M$ where $h=\hom(E,M)$.
\end{lemma}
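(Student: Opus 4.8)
The plan is to prove the two assertions in turn: first that $E$ is exceptional, and then that $hE$ is precisely the vertex subrepresentation attached to the vertex $h\,\dv E$ of $\N(M)$.

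For the first assertion I would start by reducing to the injective case. Since $M$ is supported on every vertex, $\e(-e_i,M)=\dim M(i)\neq 0$, so $-e_i\notin{\sf \Delta}_1(M)$; hence $\ep\neq -e_i$, and by \cite[Theorem 6.3.1]{IOTW} (exactly as in the proof of Theorem \ref{T:Newton}) a general presentation in $\PHom(\ep)$ is injective, giving a short exact sequence $0\to P_-\to P_+\to E\to 0$. Applying $\Hom(-,M)$ and using $\Ext(P_\pm,M)=0$ identifies $\E(\ep,M)$ with $\Ext(E,M)$, so $\ep\in{\sf \Delta}_1(M)$ gives $\Ext(E,M)=0$; moreover $E$ is general of dimension $\beta:=\dv E$ and $\ep=\innerprod{\beta,-}$ by Remark \ref{r:HE0}. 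Next I would invoke the canonical decomposition $\beta=\sum_j c_j\beta_j$ into Schur roots: a general representation of dimension $\beta$ is $\bigoplus_j E_j^{c_j}$ with $E_j$ general of dimension $\beta_j$, and $\Ext(E,M)=0$ together with the additivity and non-negativity of $\ext(-,\alpha)$ on direct sums forces $\ext(\beta_j,\alpha)=0$, i.e.\ $\innerprod{\beta_j,-}\in{\sf \Delta}_1(M)$ for all $j$. Thus $\ep=\sum_j c_j\innerprod{\beta_j,-}$ is a non-negative integral combination of lattice points of ${\sf \Delta}_1(M)$; since $\ep$ spans an extremal ray of $\cone{\sf \Delta}_1(M)$ and is indivisible, each $\innerprod{\beta_j,-}$ is a positive integer multiple of $\ep$ (non-zero because the Euler form is non-degenerate), and comparing coefficients leaves a single summand with $c_1=1$ and $\innerprod{\beta,-}=\ep$, so $\beta$ is a Schur root. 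It then remains to exclude the imaginary case: when $\hom(\ep,M)>0$ this is immediate from Lemma \ref{L:codim2}, because $\ep$ is an outer normal vector of a facet of $\N(M)$ by Theorem \ref{T:Newton}; the degenerate case $\hom(\ep,M)=0$ (where $t_{\dtb}(M)=0$) requires re-running the codimension estimate of Lemma \ref{L:codim2} with $\gamma=0$, using Lemma \ref{L:perp} applied to the $\ep$-semistable $\dtb^\perp(M)=\tc_{\dtb}(M)\subseteq M$ and the rigidity of $M$. Either way $\beta$ is a real Schur root, so $E$ is exceptional.

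For the second assertion, since $E$ is exceptional and $Q$ has no relations, $\E(E,E)=\Ext(E,E)=0$, so the presentation $0\to P_-\to P_+\to E\to 0$ is rigid with cokernel $E$; by Corollary \ref{C:univhomo}, $t_{\dtb}(M)$ is the image $L$ of the universal homomorphism $hE\to M$, with $h=\hom(\ep,M)=\hom(E,M)$, and in particular $L$ is a quotient of $E^h$, so $\dv L\leq h\beta$. If $h=0$ then $hE=0$ and the claim is vacuous, so assume $h>0$. By Lemma \ref{L:vertex}, $\gamma:=\dv L$ is a vertex of $\N(M)$, and $\ep(\gamma)=f_M(\ep)=\hom(\ep,M)=h$ by Theorem \ref{T:HomE} and Remark \ref{r:n=1}.(2) (recall that $M$, being rigid, is general of its dimension vector); on the other hand $\ep$ is an indivisible facet normal with $f_M(\ep)=h>0$, so by the remark after Lemma \ref{L:perp} we get $\innerprod{\gamma,-}=k\ep=\innerprod{k\beta,-}$ for some $k\in\mb{N}$, whence $\gamma=k\beta$ by non-degeneracy of the Euler form, and $h=\ep(\gamma)=k\innerprod{\beta,\beta}=k$. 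Therefore $\dv L=h\beta=\dv(hE)$, so the surjection $hE\twoheadrightarrow L$ is an isomorphism; that is, $hE$ embeds in $M$ as $t_{\dtb}(M)$, which by Theorem \ref{T:onept} is the unique subrepresentation of $M$ of dimension $h\,\dv E$, so $hE$ is the vertex subrepresentation of $M$ at that vertex.

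The step I expect to be the main obstacle is the exclusion of imaginary Schur roots in the degenerate case $\hom(\ep,M)=0$, where Lemma \ref{L:codim2} cannot be quoted verbatim and its codimension bound must be re-derived from Lemma \ref{L:perp} and the rigidity of $M$; the remainder is a matter of combining the canonical decomposition with the extremality and indivisibility of $\ep$, Corollary \ref{C:univhomo}, and the identification of $\dv t_{\dtb}(M)$ furnished by the remark after Lemma \ref{L:perp}.
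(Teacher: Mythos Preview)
Your argument for the second assertion (that $hE$ is a vertex subrepresentation) is correct and is a genuinely different route from the paper's. The paper proves directly that the universal map $hE\to M$ is injective by an inductive argument: taking the minimal $m$ for which $\phi_m=(f_1,\dots,f_m):mE\to M$ fails to be injective, it uses $\Ext(K,M)=\Ext(I,M)=0$ (where $K=\Ker\phi_m\hookrightarrow mE$ and $I=\Img\phi_m\hookrightarrow M$, both sitting inside something with vanishing $\Ext(-,M)$) together with extremality of $\ep$ in ${\sf R}_1(M)$ to force $\delta_K,\delta_I$ to be multiples of $\ep$, hence $I\cong(m-1)E$, contradicting linear independence of $f_1,\dots,f_m$. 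Your route---computing $\dv t_{\dtb}(M)=h\,\dv E$ via the remark after Lemma \ref{L:perp} and the reality $\innerprod{\beta,\beta}=1$, then concluding that the surjection $hE\twoheadrightarrow t_{\dtb}(M)$ is an isomorphism by dimension count---is shorter and makes pleasant use of the structural observation in that remark.

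There is, however, a genuine gap in your treatment of the first assertion in the case $h=\hom(\ep,M)=0$. Since $\ep\in{\sf R}_1(M)\subset{\sf\Delta}_1(M)$ forces $\e(\ep,M)=0$ as well, $M$ is $\ep$-semistable, so $t_{\dtb}(M)=0$ and $\fc_{\dtb}(M)=0$. Both conclusions of Lemma \ref{L:perp} then become vacuous ($0\perp L$ and $M/L\perp 0$), and there is no second independent linear functional annihilating $\dv(L/t_{\dtb}(M))$ with which to ``re-run'' the codimension estimate of Lemma \ref{L:codim2}; your proposed salvage does not go through. The paper handles this case by a different idea: it observes that $E\in{}^\perp M$, and extremality of $\ep$ in ${\sf R}_1(M)$ forces $E$ to be a \emph{simple} object of the perpendicular category ${}^\perp M$ (otherwise a nontrivial subobject $F\subset E$ in ${}^\perp M$ yields $\ep=\delta_F+\delta_{E/F}$ with both summands in ${\sf\Delta}_1(M)$). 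One then invokes the Schofield--Geigle--Lenzing description of ${}^\perp M$ for rigid $M$ to conclude that its simple objects are exceptional, hence $\dv E$ is a real Schur root. For $h>0$ your use of Lemma \ref{L:codim2} matches the paper's, and your canonical-decomposition argument for Schur-ness is equivalent to the paper's use of Corollary \ref{C:indnormal}.
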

\begin{proof} By Theorem \ref{T:Newton}, $\ep$ is a normal vector of $\N(M)$ so $\mc{L}(\ep,M)$ has codimension $1$.
	Since $\ep$ is an indecomposable $\delta$-vector, $E$ must be a Schur representation \cite{Ka2}.
	So to show $E$ is exceptional, it suffices to show $\dv E$ is real, i.e., $\ep(\dv E)=1$.	
	If $h=0$ then $E\in {^\perp M}$. In fact, it is a simple object in $^\perp M$ so that $\dv E$ is real \cite{S1}.
	This is because otherwise there is some subrepresentation $F$ of $E$ such that both $F$ and $E/F$ are in ${^\perp M}$,
	which contradicts that $\ep\in {\sf R}_1(M)$.
	Suppose that $h>0$, then $\dv E$ is real by Lemma \ref{L:codim2}.

	We fix a basis $\{f_1,\dots, f_h\}$ of $\hom(E,M)$, and let $\phi_n$ be the map $(f_1,\dots,f_n): nE\to M$ where $n=1,2,\dots,h$.
	By Corollary \ref{C:univhomo} and Lemma \ref{L:vertex} we remain to show that the universal homomorphism $hE \to M$  is injective.
	If not injective, let $m$ be the minimal number such that $mE \xrightarrow{\phi_m} M$ is not injective.
	We claim that the kernel $K$ and image $I$ of this map must be isomorphic to $E$ and $(m-1)E$ respectively.
	Indeed, since $\Ext(E,M)=\Ext(M,M)=0$ and $K\hookrightarrow mE, I\hookrightarrow M$, we have that $\Ext(K,M)=\Ext(I,M)=0$.
	So $\delta_K$ and $\delta_I \in {\sf \Delta}_1(M)$.
	But $h\ep = \delta_K+\delta_I$ and $\ep$ is extremal, so $\delta_K$ and $\delta_I$ have to be multiples of $\ep$.
	Thus $\dv K$ and $\dv I$ are multiples of $\dv E$.
	Since $(m-1)E\xhookrightarrow{\phi_{m-1}} M$, we must have that $I \cong (m-1)E$ and $\dv K = \dv E$.
	Since $E$ is Schur, the morphism $mE \to I\cong (m-1)E$ is represented by an $(m-1)\times m$ complex matrix.
	This contradicts the fact that $\{f_1,\dots,f_m\}$ is linearly independent.
\end{proof}

\begin{corollary} \label{C:Newton} If $M$ is rigid, then the rays in ${\sf R}_0(M)$ and ${\sf R}_1(M)$ are either $-{e}_i$ or correspond to real Schur roots.
\end{corollary}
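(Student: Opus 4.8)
The plan is to handle ${\sf R}_1(M)$ directly from Lemma~\ref{L:inj} and to reduce ${\sf R}_0(M)$ to it by duality. For a ray $\ep\in{\sf R}_1(M)$, Theorem~\ref{T:Newton} makes $\ep$ an outer normal vector of $\N(M)$, and $\ep\neq -e_i$ for all $i$ because $-e_i\notin{\sf\Delta}_1(M)$ (recorded in the proof of Theorem~\ref{T:Newton}). Lemma~\ref{L:inj} then says $E=\Coker(\ep)$ is exceptional, so $\dv E$ is a real Schur root; thus every ray of ${\sf R}_1(M)$ corresponds to a real Schur root, and none has the form $-e_i$.

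For ${\sf R}_0(M)$ I would pass to the opposite quiver. Let $D=\Hom_{\mb C}(-,\mb C)\colon\rep(\mb CQ)\to\rep(\mb CQ^{\mathrm{op}})$ be the standard duality: it is an exact contravariant equivalence, identifies $Q_0$ with $Q_0^{\mathrm{op}}$ and preserves dimension vectors, interchanges $P_i$ and $I_i$, carries exceptional representations to exceptional representations, and (since $\Ext^1_{Q^{\mathrm{op}}}(DM,DM)\cong D\Ext^1_Q(M,M)$) carries rigid representations to rigid representations. The key step is the bookkeeping identity $\hom_Q(\delta,M)=\e_{Q^{\mathrm{op}}}(-\delta,DM)$ for all $\delta\in\mb Z^{Q_0}$, which one extracts from Lemma~\ref{L:E}.(3) and its injective analogue (Remark~\ref{R:dual}) by tracking how $D$ acts on minimal projective/injective presentations and on the Nakayama functor. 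Granting this, ${\sf\Delta}_0(M)=-{\sf\Delta}_1(DM)$, hence ${\sf R}_0(M)=-{\sf R}_1(DM)$.

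Now apply the first paragraph to $DM$, a rigid representation of the acyclic quiver $Q^{\mathrm{op}}$: every ray $\ep'\in{\sf R}_1(DM)$ has $E'=\Coker_{Q^{\mathrm{op}}}(\ep')$ exceptional, so $\dv E'$ is a real Schur root of $Q^{\mathrm{op}}$, and hence of $Q$ via $D$. Translating through ${\sf R}_0(M)=-{\sf R}_1(DM)$, a ray $\delta_0\in{\sf R}_0(M)$ equals $-\ep'$ for such an $\ep'$: when $\ep'=e_i$ (so $E'=P_i$) this is the ray $-e_i$, and otherwise $\delta_0$ corresponds to the real Schur root $\dv E'$. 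This gives the claimed dichotomy.

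The step I expect to be the real obstacle is pinning down $\hom_Q(\delta,M)=\e_{Q^{\mathrm{op}}}(-\delta,DM)$ without sign or side errors: one must be careful about whether $D$ turns a projective presentation into an injective one or into its mirror, about the effect on the reduced weight ($\delta$- and $\check\delta$-) vectors, and about the interaction with $\nu$. A less slick but self-contained alternative is to rerun the proof of Lemma~\ref{L:inj} verbatim with injective presentations and the functor $\fc_{\dtb}$ in place of $t_{\dtb}$, using the dual vanishing in place of $\Hom(t(C),M)=0$; this stays inside the paper's framework at the cost of repeating the argument. As throughout this subsection, one should first reduce to the case that $M$ is supported on every vertex of $Q$ (restricting to a sub-quiver otherwise), so that $-e_i\notin{\sf\Delta}_1(M)$ is available.
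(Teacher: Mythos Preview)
Your treatment of ${\sf R}_1(M)$ matches the paper, and your alternative for ${\sf R}_0(M)$---rerunning Lemma~\ref{L:inj} with injective presentations via Remark~\ref{R:dual}---is precisely the paper's ``dual argument'': it rewrites ${\sf \Delta}_0(M)=\{\delta\mid\ec(M,-\delta)=0\}$ and then appeals to the injective-side analogue of Lemma~\ref{L:inj} inside $\rep(Q)$.

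Your primary route through $D$ and $Q^{\mathrm{op}}$ has a gap, and not where you anticipated. The identity $\hom_Q(\delta,M)=\e_{Q^{\mathrm{op}}}(-\delta,DM)$, hence ${\sf R}_0(M)=-{\sf R}_1^{Q^{\mathrm{op}}}(DM)$, is correct. The problem is the final translation: in this paper ``$\delta_0$ corresponds to a real Schur root'' means that $\Coker_Q(\delta_0)$ is exceptional (this is the usage in Lemma~\ref{L:codim2} and the setup of Corollary~\ref{C:acyclicf}), and your argument only shows that $E'=\Coker_{Q^{\mathrm{op}}}(-\delta_0)$ is exceptional. These two cokernels generally have \emph{different} dimension vectors. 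For $Q\colon 1\to 2$ and $M=P_1$ one has $\delta_0=(1,-1)\in{\sf R}_0(M)$ with $\Coker_Q(\delta_0)=S_1$ of dimension $(1,0)$, whereas $E'=\Coker_{Q^{\mathrm{op}}}((-1,1))=S_2^{Q^{\mathrm{op}}}$ has dimension $(0,1)$; so your sentence ``$\delta_0$ corresponds to the real Schur root $\dv E'$'' is literally false. The approach can be repaired---from $\langle\dv C,-\rangle_Q=\delta_0=-\langle-,\dv E'\rangle_Q$ one deduces $\langle\dv C,\dv C\rangle_Q=\langle\dv E',\dv E'\rangle_Q=1$, so $\dv C$ is a real root, and since $C=\Coker_Q(\delta_0)$ is indecomposable it must be the unique exceptional of that dimension---but this extra step is missing from your write-up, and the injective-presentation dual avoids it entirely by never leaving $\rep(Q)$.
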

\begin{proof} We only need to prove for ${\sf R}_1(M)$ because the dual argument can deal with the rays in ${\sf R}_0(M)$.
	Note that by Remark \ref{R:dual} we have 
	$${\sf \Delta}_0(M)=\{\delta\in \mb{Z}^{Q_0} \mid {\ext}(M,-\delta)=0 \}.$$
	The statement for ${\sf R}_1(M)$ is just the content of Lemma \ref{L:inj}.
\end{proof}

The following example shows that for general quivers with potentials the outer normal vector may not correspond to an $\Ext$-rigid representation.
\begin{example} For the same quiver with potential in Example \ref{ex:QPfaces}, let $M$ be the $\E$-rigid representation with $\check{\delta}$-vector $(1,-1,2,0)$.
	This one is reachable by a sequence of mutations $(2,3,4,1,2)$.
	Then $\N(M)$ has a facet with normal vector $(0,1,-1,-1)$ whose generic cokernel is a $(1,1,0,1)$-dimensional representation,
	which is easily seen to be not $\Ext$-rigid.
\end{example}

A facet with outer normal vector $\delta$ is called a {\em 0-facet} if $\hom(\delta,M)=0$ and is called a {\em +-facet} if $\hom(\delta,M)>0$.
Let $\ep$ be a outer normal vector of $\N(M)$ with $M$ rigid, then by Theorem \ref{T:Newton} either $\ep=-e_i$ or $E=\Coker(\ep)$ is an exceptional representation.
We thus have the functors $\pi_{Q_\ep}$ and $\check{\pi}_{Q_\ep}$ (see the remarks after Lemma \ref{L:ortho}).
Let $\iota_\ep : K_0(\rep(Q_\ep)) \to  K_0(\rep(Q))$ be the linear map defined by $e_i \mapsto \dv V_i$ where $V_i$ is the $\ep$-stable representation corresponding to the $i$-th vertex of $Q_\ep$.
\begin{corollary} \label{C:acyclicf} 
Let $M$ be a rigid representation of $Q$. 
Then \begin{enumerate}
	\item \hspace{-.27cm} The restriction of $F_M$ to a 0-facet is $F_{\check{\pi}_{Q_\ep}(M)}$ under the monomial transformation $\iota_\ep$;
	\item \hspace{-.27cm} The restriction of $F_M$ to a +-facet is $F_{\pi_{Q_\ep}(M)}$ under $\iota_\ep$ then times $y^{\dv h E}$,
\end{enumerate}
where $\ep$ is the outer normal vector of this facet with $\Coker(\ep)=E$.
\end{corollary}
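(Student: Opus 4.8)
The plan is to combine Corollary \ref{C:Newton} with the reflection functors of Lemma \ref{L:ortho} and feed the result into Theorem \ref{T:faces}. By Corollary \ref{C:Newton} the outer normal vector $\ep$ of the given facet is either $-e_i$ or has exceptional cokernel $E=\Coker(\ep)$; in either case $\ep$ is the weight of a rigid presentation, so $\e(\ep,\ep)=0$, and the remark after Theorem \ref{T:torsion} gives $t_\dtb(M)=t_\ep(M)$ and $\tc_\dtb(M)=\tc_\ep(M)$, with $(\mc{T}(\ep),\mc{F}(\ep))=(\op{Gen}(E),\{N\mid\Hom(E,N)=0\})$ (see the proof of Corollary \ref{C:univhomo}). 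Since the $\delta$-stable reduction $\rho_\delta$ restricts on $\ss{\ep}{Q}$ to the equivalence $\ss{\ep}{Q}\xrightarrow{\sim}\rep(Q_\ep)$ of Section \ref{ss:stablered}, under which $\iota_\delta$ becomes $\iota_\ep$, it will suffice to show that $\dtb^\perp(M)\in\ss{\ep}{Q}$ and that it corresponds to $\check{p}_\ep(M)$ for a $0$-facet and to $p_\ep(M)$ for a $+$-facet; Theorem \ref{T:faces} then yields the two claimed formulas. I would write the argument for $E$ exceptional; when $\ep=-e_i$ one replaces $E^\perp$ by $\ss{-e_i}{Q}=\{N\mid N(i)=0\}$ and the same, easier, argument applies (and this case is always a $0$-facet).

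For a $0$-facet I would argue as follows. Here $\hom(\ep,M)=\Hom(E,M)=0$, so $t_\ep(M)\in\op{Gen}(E)$ must vanish, since a nonzero $t_\ep(M)$ would give a nonzero map $E^n\to M$; hence $\dtb^\perp(M)=\tc_\ep(M)$ and $\b{y}^{\dv t_\dtb(M)}=1$. Now $\tc_\ep(M)$ is the largest subrepresentation $L\subseteq M$ with $\Ext(E,L)=0$, and $\Hom(E,\tc_\ep(M))\subseteq\Hom(E,M)=0$, so $\tc_\ep(M)\in E^\perp=\ss{\ep}{Q}$. For any $N\in E^\perp\subseteq\check{\mc{T}}(\ep)$ (as $\e(\ep,-)=\Ext(E,-)$), the image of a morphism $N\to M$ is a quotient of $N$, hence again lies in the torsion class $\check{\mc{T}}(\ep)$, so it is a subrepresentation of $M$ contained in $\tc_\ep(M)$; therefore $\Hom(N,\tc_\ep(M))\to\Hom(N,M)$ is an isomorphism, which identifies $\tc_\ep(M)$ with $\check{p}_\ep(M)$. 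Theorem \ref{T:faces} then gives that the restriction of $F_M$ to the facet equals $\iota_\ep\left(F_{\check{\pi}_{Q_\ep}(M)}\right)$, which is statement (1).

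For a $+$-facet, $\hom(\ep,M)>0$, so $\ep\notin{\sf R}_0(M)$ and by Theorem \ref{T:Newton} we have $\ep\in{\sf R}_1(M)$; by \eqref{eq:Delta01}, using $\e(\ep,M)=\Ext(E,M)$ over the hereditary quiver, this gives $\Ext(E,M)=0$, hence $\tc_\ep(M)=M$. By Lemma \ref{L:inj} the universal homomorphism $hE\to M$ with $h=\hom(E,M)$ is injective, so $t_\ep(M)\cong E^h$, and therefore $\dtb^\perp(M)=M/E^h$ and $\b{y}^{\dv t_\dtb(M)}=\b{y}^{h\dv E}=\b{y}^{\dv hE}$. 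Applying $\Hom(E,-)$ and $\Ext(E,-)$ to $0\to E^h\to M\to M/E^h\to 0$, using $\Ext(E,M)=0$ and $\Hom(E,M/E^h)=0$, shows $M/E^h\in E^\perp$; applying $\Hom(-,N)$ for $N\in E^\perp$ and using $\Hom(E^h,N)=0$ gives $\Hom(M/E^h,N)\cong\Hom(M,N)$, so $M/E^h=p_\ep(M)$. Theorem \ref{T:faces} then gives that the restriction of $F_M$ to the facet equals $\b{y}^{\dv hE}\,\iota_\ep\left(F_{\pi_{Q_\ep}(M)}\right)$, which is statement (2).

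The hard part will be aligning the abstract stabilization functor $\dtb^\perp$ of Section \ref{S:stab} with the concrete reflection functors $p_\ep,\check{p}_\ep$ of Lemma \ref{L:ortho}. The key structural input is that on a facet of $\N(M)$ exactly one of $t_\ep(M),\tc_\ep(M)$ is nontrivial — a $0$-facet has $\Hom(E,M)=0$, killing $t_\ep(M)$, and a $+$-facet has $\Ext(E,M)=0$, forcing $\tc_\ep(M)=M$ — and this dichotomy, namely that no normal vector $\ep$ of $\N(M)$ has both $\hom(\ep,M)>0$ and $\ext(\ep,M)>0$, is precisely what Theorem \ref{T:Newton} provides. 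Once that is in hand, the identification of $\dtb^\perp(M)$ with $\check{p}_\ep(M)$ or $p_\ep(M)$ is a routine adjunction computation resting only on $E$ being exceptional over the hereditary algebra $\mb{C}Q$.
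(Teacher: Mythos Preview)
Your proof is correct and follows essentially the same route as the paper's own argument: both split into the $0$-facet and $+$-facet cases via Theorem \ref{T:Newton}, use Corollary \ref{C:univhomo} and Lemma \ref{L:inj} to compute $t_\ep(M)$ and $\tc_\ep(M)$ explicitly, identify $\br{\ep}^\perp(M)$ with $\check{p}_\ep(M)$ or $p_\ep(M)$, and then invoke Theorem \ref{T:faces}. The only notable difference is that the paper cites \cite[Propositions 3.2 and 3.5]{GL} for the explicit description of $p_\ep$ and $\check{p}_\ep$ as (co)kernels of universal maps, whereas you verify the adjunction property directly; this makes your version slightly more self-contained but is not a different strategy.
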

\begin{proof} 
If $\hom(\ep,M)=0$, then $t(M)=0$ and $\tc(M)$ is the kernel of the universal homomorphism $M \to e \tau E$ by Corollary \ref{C:univhomo}.
Since $M$ is fully supported on $Q$, $\ep \neq e_i$ so $\tau E$ is a well-defined representation.
On the other hand, we recall from \cite[Proposition 3.5]{GL} that when $\hom(\ep,M)=0$, the functor $\check{p}_{\ep}: \rep(Q) \to \ss{\ep}{Q}$ is exactly given by taking the kernel of $M \to e\tau E$.
This shows that $\check{p}_{\ep}(M) = \br{\ep}^\perp(M)$ and hence $\check{\pi}_{Q_\ep}(M)=\pi_\ep(M)$.

If $\hom(\ep,M)>0$, i.e., $\ep\notin {\sf R}_0$, then Theorem \ref{T:Newton} implies that $\ep\in {\sf R}_1$ so $\ext(\ep,M)=0$.
Then $\tc(M)=M$ and $t(M)$ is the kernel of the universal homomorphism $hE \to M$ by Corollary \ref{C:univhomo}.
Moreover, the proof of Lemma \ref{L:inj} says that the universal homomorphism is injective.
On the other hand, we recall from \cite[Proposition 3.2]{GL} that when $\ext(\ep,M)=0$, the functor $p_\ep: \rep(Q) \to \ss{\ep}{Q}$ is given by taking the cokernel of the universal homomorphism $hE \hookrightarrow M$.
This shows that $p_{\ep}(M) = \br{\ep}^\perp(M)$ and hence ${\pi}_{Q_\ep}(M)=\pi_\ep(M)$.
The results then follow from Theorem \ref{T:faces}.
\end{proof}

\begin{example} Let $Q$ be the quiver $\twoone{1}{2}{3}$, and $\alpha$ be the real Schur root $(2,4,1)$.
	$$F_M=1+3y_2 + 3y_2^2 + y_2^3 + y_3 + 4y_2y_3 + 6y_2^2y_3 + 4y_2^3y_3 + y_2^4y_3  + 2y_1y_2^2y_3 + 4y_1y_2^3y_3 + 2y_1y_2^4y_3 + y_1^2y_2^4y_3.$$
Let $T$ be the unique indecomposable representation of dimension $(1,2,0)$.
The variable $\yb_1, \yb_2, \yb_3$ corresponds to the vertex $T,P_2$, and $I_3$ respectively.
Recall that $h = \hom(\epsilon,M) = f_M(\epsilon)$ and $e = \e(\epsilon,M) = \fc_M(-\epsilon)$.
\begin{center} \hspace*{-1cm} 
	\begin{tabular}{l*{7}{c}r} 
		 &$\ep$ & $h$ & $e$ & $Q_{\ep}$ & $\dv$ & $F_{\pi_\ep(M)}$ & restriction of $F_M$ &\\ \hline
		& $(2,-1,0)$ & 0 & 0 & $\Krontwo{T}{S_3}$ &  $(2,1)$  & $1+ y_3  + 2\yb_1y_3 + \yb_1^2y_3$ & $1 + y_3 + 2y_1y_2^2y_3 + y_1^2y_2^4y_3$&\\
		& $(1,0,-2)$ & 0 & 0 & $\Kronthree{I_3}{S_2}$ &  $(1,3)$  & $1 + 3y_2 + 3y_2^2 + y_2^3 + y_2^3\yb_3$ & $1+ 3y_2 + 3y_2^2  + y_2^3 + y_1^2y_2^4y_3$&\\
		& $(-1,0,0)$ & 0 & 2 & $\Kronone{S_2}{S_3}$ & $(4,1)$ & $(1+y_2)^3(1+y_3 + y_2y_3)$ & $(1+y_2)^3(1+y_3 + y_2y_3)$&\\ 
		& $(0,0,1)$ & 1 & 0 & $\Krontwo{S_1}{S_2}$ &  $(2,4)$ &  $(1+2y_2 + y_2^2 + y_1y_2^2)^2$ & $y_3(1+2y_2 + y_2^2 + y_1y_2^2)^2$ &\\ %
		& $(0,1,-1)$ & 3 & 0 & $\Krontwo{S_1}{P_2}$  &  $(2,1)$ & $1+ \yb_2  + 2y_1\yb_2 + y_1^2\yb_2$ & $y_2^3(1+ y_2y_3 + 2y_1y_2y_3 + y_1^2y_2y_3)$&  
	\end{tabular}
\end{center}
\end{example}


\section*{Acknowledgement}
The author would like to thank Hugh Thomas for encouraging him to publish this manuscript.
He would like to thank Julian K\"{u}lshammer and Bernhard Keller for the enlightening discussion on the category of twisted stalks.
He would like to particularly thank the anonymous referee for numerous corrections and suggestions (including suggesting a short section to explain the relation between the general presentations and $\tau$-tilting theory).

\bibliographystyle{amsplain}

\end{document}